\def\red{\color{red}}
\def\vpz{\vphantom}
\def\rr{{\mathbb R}}
\def\rn{{{\rr}^n}}
\def\rnn{{\rr}^{n+1}_+}
\def\zz{{\mathbb Z}}
\def\cc{{\mathbb C}}
\def\nn{{\mathbb N}}
\def\cp{{\mathcal P}}
\def\cf{{\mathcal F}}
\def\cm{{\mathcal M}}
\def\ca{{\mathcal A}}
\def\fz{\infty}
\def\az{\alpha}
\def\bz{\beta}
\def\gz{{\gamma}}
\def\bgz{{\Gamma}}
\def\lz{\lambda}
\def\oz{{\omega}}
\def\tz{\theta}
\def\vz{\varphi}
\def\uc{{\varepsilon}}
\def\lf{\left}
\def\r{\right}
\def\hs{\hspace{0.26cm}}
\def\ls{\lesssim}
\def\noz{\nonumber}
\def\wz{\widetilde}
\def\wh{\widehat}
\def\st{\subset}
\def\com{\complement}
\def\bh{\backslash}
\def\btd{\bigtriangledown}
\def\cs{{\mathcal S}}
\def\gfz{\genfrac{}{}{0pt}{}}
\def\dist{\mathop\mathrm{\,dist\,}}
\def\supp{\mathop\mathrm{\,supp\,}}
\def\loc{{\mathop\mathrm{\,loc\,}}}
\def\q1{\wz q}
\def\Q1{q_1}
\def\lv{{L^{p(\cdot)}(\rn)}}
\def\wlv{W\!L^{p(\cdot)}(\rn)}
\def\whv{{W\!H^{p(\cdot)}(\rn)}}
\def\wha{{W\!H_{L,\mathrm{at},M}^{p(\cdot)}(\rn)}}
\def\whf{{W\!H_{L,\mathrm{max}}^{p(\cdot),\mathcal F}(\rn)}}
\def\Bij{{B_{i,j}}}
\def\wBij{\wz B_{i,j}}
\def\bij{{b_{i,j}}}
\def\lij{{\lz_{i,j}}}
\def\aij{{a_{i,j}}}
\def\mij{{m_{i,j}}}
\def\vp{{L^{p(\cdot)}(\rn)}}
\def\ujb{{U_j(B)}}
\def\mol{\mathbb{W\!H}_{L,\,M}^{p(\cdot),\,\uc}(\mathbb{R}^n)}
\def\mhp{W\!H_{L,\,M}^{p(\cdot),\,\uc}(\mathbb{R}^n)}
\def\supp{{\mathop\mathrm{\,supp\,}}}
\def\dist{{\mathop\mathrm{\,dist\,}}}
\def\loc{{\mathop\mathrm{loc\,}}}
\newtheorem{thm}{Theorem}[section]
\newtheorem{prop}[thm]{Proposition}
\newtheorem{lem}[thm]{Lemma}
\newtheorem{cor}[thm]{Corollary}
\theoremstyle{definition}
\newtheorem{defn}[thm]{Definition}
\newtheorem{rem}[thm]{Remark}
\newtheorem{assumption}[thm]{Assumption}
\numberwithin{equation}{section}
\numberwithin{equation}{section}
\begin{document}

\title{\bf\Large Variable Weak Hardy Spaces $W\!H_L^{p(\cdot)}(\rn)$ Associated with
 Operators Satisfying Davies-Gaffney Estimates
\footnotetext{\hspace{-0.35cm} 2010 {\it
Mathematics Subject Classification}. Primary 42B30;
Secondary 42B35, 42B25.
\endgraf {\it Key words and phrases.} weak Hardy space, variable exponent, operator,
Davies-Gaffney estimate, atom, molecule, maximal function.
\endgraf The first author is supported by the Construct Program of the Key Discipline in
Hunan Province, the Scientific Research Fund of
Hunan Provincial Education Department (Grant No. 17B159)
and Hunan Natural Science Foundation
(Grant No: 2018JJ3321). This project is also supported by the National
Natural Science Foundation of China
(Grant Nos. 11701174, 11571039, 11761131002 and 11726621).}}
\author{Ciqiang Zhuo and Dachun Yang\,\footnote{Corresponding author/{\red May 20, 2018}/Final Version}}
\date{ }
\maketitle

\vspace{-0.8cm}

\begin{center}
\begin{minipage}{13cm}
{\small {\bf Abstract}\quad
Let $p(\cdot):\ \mathbb R^n\to(0,1]$ be a variable exponent function
satisfying the globally log-H\"older continuous condition and $L$ a one to one operator of
type $\omega$ in $L^2(\rn)$, with $\oz\in[0,\,\pi/2)$,
which has a bounded holomorphic functional calculus and satisfies the Davies-Gaffney estimates.
In this article, the authors introduce the variable weak Hardy space
$W\!H_L^{p(\cdot)}(\mathbb R^n)$ associated with $L$ via the corresponding square function.
Its molecular characterization
is then established by means of the atomic decomposition of the variable weak tent space
$W\!T^{p(\cdot)}(\mathbb R^n)$ which is also obtained in this article.
In particular, when $L$ is non-negative and self-adjoint, the authors obtain the
atomic characterization of $W\!H_L^{p(\cdot)}(\mathbb R^n)$.
As an application of the molecular characterization,
when $L$ is the second-order divergence form elliptic operator
with complex bounded measurable coefficient, the authors prove that the associated
Riesz transform $\nabla L^{-1/2}$ is bounded from $W\!H_L^{p(\cdot)}(\mathbb R^n)$
to the variable weak Hardy space $W\!H^{p(\cdot)}(\mathbb R^n)$.
Moreover, when $L$ is non-negative and self-adjoint with the kernels
of $\{e^{-tL}\}_{t>0}$ satisfying the Gauss upper bound estimates,
the atomic characterization of $W\!H_L^{p(\cdot)}(\mathbb R^n)$
 is further used to characterize the space via non-tangential maximal functions.
}
\end{minipage}
\end{center}

\section{Introduction\label{s-intro}}

The variable Lebesgue space $\lv$
is a generalization of the classical Lebesgue space $L^p(\rn)$, via replacing the
constant exponent $p$ by the exponent function $p(\cdot):\ \rn\to(0,\fz)$, which consists of all
measurable functions $f$ such that, for some $\lz\in(0,\fz)$,
$$\int_\rn[|f(x)|/\lz]^{p(x)}\,dx<\fz.$$
The study of variable
Lebesgue spaces can be traced back to Orlicz \cite{or31} and have been the subject of more intensive
study since the early 1990s because of their intrinsic interest for applications in
harmonic analysis \cite{cruz03,cfbook,din04,dhr11,kr91}, in
partial differential equations and variation calculus \cite{am05,hhl08,su09},
in fluid dynamics \cite{am02,rm00} and image processing \cite{cgzl08}.

Moreover, Nakai and Sawano \cite{ns12} introduced the variable Hardy space $H^{p(\cdot)}(\rn)$
with $p(\cdot)$ satisfying the globally log-H\"older continuous condition
and, independently, Cruz-Uribe and Wang \cite{cw14} also investigated the space $H^{p(\cdot)}(\rn)$
with $p(\cdot)$ satisfying some conditions slightly weaker than those used in \cite{ns12}.
Later, Sawano \cite{Sa13} extended the atomic characterization of $H^{p(\cdot)}(\rn)$, which also improves
the corresponding result in \cite{ns12}, Zhuo et al. \cite{zyl16} established
characterizations of $H^{p(\cdot)}(\rn)$ via intrinsic square functions
and Yang et al. \cite{yzn16} characterized the space $H^{p(\cdot)}(\rn)$ by means of
Riesz transforms. Furthermore, in \cite{yyyz16}, Yan et al. introduced the variable
weak Hardy space $W\!H^{p(\cdot)}(\rn)$ with $p(\cdot)$
satisfying the globally log-H\"older continuous condition and proved the boundedness of
convolutional $\delta$-type Calder\'on-Zygmund operators from $H^{p(\cdot)}(\rn)$
to $W\!H^{p(\cdot)}(\rn)$ including the critical case $p_-=\frac n{n+\delta}$.

The main purpose of this article is to introduce and investigate the variable weak Hardy space associated with
operator $L$ on $\rn$, denoted by $W\!H_L^{p(\cdot)}(\rn)$.

Recall that function spaces (especially Hardy spaces) associated
with various operators have been inspired great interests in recent years;
see, for example, \cite{adm05,bckyy13a,bckyy13,dl13,dy05,dy051,hm09,hmm11,jy10,yz16,yzz17,zy15}.
Particularly, using the Lusin area function associated with operator,
Auscher et al. \cite{adm05} initially introduced
the Hardy space $H_L^1(\rn)$ associated with an operator $L$ whose heat kernel
has a pointwise Gaussian upper bound and established
its molecular characterization. Based on this, Duong and Yan \cite{dy05,dy051}
introduced BMO-type spaces associated with $L$
and proved that they are dual spaces of $H_L^1(\rn)$.
Yan \cite{yan08} further generalized these results to the Hardy spaces
$H_L^p(\rn)$ with $p\in(0,1]$ but close to $1$ and their dual spaces.
Moreover, Jiang et al. \cite{jyz09} investigated the Orlicz-Hardy space
and its dual space associated with such an operator $L$. Later, Hardy spaces associated with
operators satisfying the weaker condition, the so-called Davies-Gaffney type estimates,
were studied in \cite{bckyy13a,bckyy13,cchy15,hlmmy,hm09,hmm11} and their references.
In particular, Cao et al. \cite{cchy15} introduced and investigated weak Hardy spaces
$W\!H_L^p(\rn)$ associated with operators satisfying $k$-Davies-Gaffney estimates.

Very recently, via mixing up the concepts of variable function spaces
and functions spaces associated with operators, the real-variable theory of variable Hardy spaces
associated with
operators attracts a lot of attention; see \cite{abdr17,yyz16,yzz17,yz16,zy15}.
More precisely, when $p(\cdot):\ \rn\to(0,1]$ is variable exponent function satisfies
the globally log-H\"older continuous condition,
in \cite{yz16}, the authors first studied variable Hardy spaces associated with
operators $L$ on $\rn$, denoted by $H_L^{p(\cdot)}(\rn)$, where $L$ is a linear operator
on $L^2(\rn)$ and generates an analytic semigroup $\{e^{-tL}\}_{t>0}$ with kernels having
pointwise upper bounds. The molecular characterization and the dual space of $H_L^{p(\cdot)}(\rn)$
were also established in \cite{yz16}. Under an additional assumption that $L$ is non-negative
self-adjoint, the atomic characterization and those characterizations in terms of
maximal functions, including non-tangential maximal functions
and radial maximal functions, were obtained in \cite{zy15}. Moreover, variable Hardy spaces
associated with operators satisfying Davies-Gaffney estimates were introduced and investigated
in \cite{yzz17}, and local Hardy spaces with variable exponents associated to non-negative
self-adjoint operators satisfying Gaussian estimates were studied in \cite{abdr17}.

Motivated by the above results, especially by \cite{cchy15,yzz17,yyyz16}, it is the main target of
this article to establish a real-variable theory of variable weak Hardy spaces associated to
a class of differential operators
and study their applications. Precisely, let $p(\cdot):\ \rn\to(0,1]$ be a variable exponent
satisfying the globally log-H\"older continuous condition [see \eqref{elog} and \eqref{edecay} below],
and $L$ a one-to-one operator of type $\omega$ in $L^2(\rn)$,
with $\omega\in(0,\pi/2)$, which has a bounded holomorphic functional calculus and satisfies
the Davies-Gaffney estimates, namely, Assumptions \ref{as-a} and \ref{as-b} below.
Then we introduce the variable weak Hardy space $W\!H_L^{p(\cdot)}(\rn)$
(see Definition \ref{d-4-9} below). By first obtaining the atomic decomposition of variable
weak tent space, we establish the molecular characterization of $W\!H_L^{p(\cdot)}(\rn)$, which
is further used to obtain its atomic characterization when $L$ is non-negative self-adjoint.
Moreover, when $L$ satisfies Gaussian upper bound estimates
(see Remark \ref{r-3-30}(ii) below), we characterize the space $W\!H_L^{p(\cdot)}(\rn)$ by means of
non-negative maximal functions. In particular, when $L$ is a second-order divergence form
elliptic operator with complex bounded measurable coefficients, namely,
$L:=-{\rm div}(A\nabla)$
[see \eqref{eq op} below for its definition], the boundedness of the Riesz
transform $\nabla L^{-1/2}$ from $W\!H_L^{p(\cdot)}(\rn)$ to the variable weak Hardy
space $W\!H^{p(\cdot)}(\rn)$ is established by using its molecular characterization.

This article is organized as follows.

In Section \ref{s-2}, we first describe Assumptions \ref{as-a} and \ref{as-b}
imposed on the considered operator $L$ of the present article.
Then we recall some notation and notions on variable (weak) Lebesgue spaces and
introduce the definition of the variable weak Hardy space $W\!H_L^{p(\cdot)}(\rn)$
via the square function of the heat semigroup generated by $L$.

In Section \ref{s-3}, we introduce the variable weak tent space $W\!T^{p(\cdot)}(\rr_+^{n+1})$
and then establish its atomic decomposition (see Theorem \ref{t-WT-atom} below)
via the Whitney-type covering lemma and the
Fefferman-Stein vector-valued inequality of the Hardy-Littlewood maximal operator $\cm$ on the variable Lebesgue
space $\lv$ (see Lemma \ref{l-2-1x} below).
We point out that, in the atomic decomposition of each element in $W\!T^{p(\cdot)}(\rr_+^{n+1})$,
an explicit relation between the supports of $T^{p(\cdot)}$-atoms and
the corresponding coefficients is obtained, which
plays a key role in establishing the weak molecular and atomic characterizations
of $W\!H_L^{p(\cdot)}(\rn)$ in Sections \ref{s-3} and \ref{s-4}.

Section \ref{s-4} is devoted to the weak molecular characterization of
$W\!H_L^{p(\cdot)}(\rn)$ (see Theorem \ref{t-mol} below),
which is an immediate consequence of Propositions \ref{p-3-19x} and \ref{p-3-28} below.
In the proof of the inclusion of the weak molecular Hardy space $\mhp$
into $W\!H_L^{p(\cdot)}(\rn)$ (see Propositions \ref{p-3-19x}),
we make full use of a key lemma obtained by Sawano in \cite[Lemma 4.1]{Sa13} (also restated as
in Lemma \ref{l-1-30} below), which reduces some estimates on $L^{p(\cdot)}(\rn)$ norms
for some series of functions into dealing with $L^q(\rn)$ norms for the corresponding functions,
and also borrow some ideas from the proof of \cite[Theorem 4.4]{yyyz16}.

In Section \ref{s-5}, we establish the weak atomic characterization of
$W\!H_L^{p(\cdot)}(\rn)$ (see Theorem \ref{t-1-29} below), under an additional
assumption that $L$ is non-negative self-adjoint. Indeed,
by some arguments similar to those used in the proof of Proposition \ref{p-3-19x},
we show that the weak atomic Hardy space $\wha$ is a subspace of $W\!H_L^{p(\cdot)}(\rn)$.
The converse inclusion depends on the bounded holomorphic functional calculi and
the operator $\Pi_{\Phi,L}$ [see \eqref{4-26x} below], which maps a
$T^{p(\cdot)}(\rr_+^{n+1})$-atom into a $(p(\cdot),2,M)$-atom (see Lemma \ref{l-3-18a} below).

In particular, in Section \ref{s-6}, when $L$ is a non-negative self-adjoint linear operator
on $L^2(\rn)$ and satisfies the Gaussian upper bound estimates
[see Remark \ref{r-3-30}(ii) for more details], we characterize
$W\!H_L^{p(\cdot)}(\rn)$ in terms of non-tangential maximal functions
(see Corollary \ref{c-4-25} below).
We first establish non-tangential maximal function characterizations of
the weak atomic Hardy space $\wha$ by some arguments similar to those used in the proof
of \cite[Theorem 1.11]{zy15}. Then Corollary \ref{c-4-25}
is an immediate consequence of Theorems \ref{t-1-29} and \ref{t-3-16}.

As an application of the molecular decomposition of $W\!H_L^{p(\cdot)}(\rn)$ obtained in
Proposition \ref{p-3-28}, in Section \ref{s-7}, when $L$ is a second-order divergence form
elliptic operator [see Remark \ref{r-3-30}(i) below], we prove that the Riesz transform
$\nabla L^{-1/2}$ is bounded from $W\!H_L^{p(\cdot)}(\rn)$ to
the variable weak Hardy $W\!H^{p(\cdot)}(\rn)$ introduced in \cite{yyyz16}
(see Theorem \ref{t-4-2} below).

We end this section by making some conventions on notation. Throughout this article,
we denote by $C$ a positive constant which is independent of the main parameters,
but it may vary from line to line.
We also use $C_{(\az, \bz,\ldots)}$ to denote a positive constant depending on
the parameters $\az$, $\bz$, $\ldots$.
The \emph{symbol $f\ls g$} means that $f\le Cg$.
If $f\ls g$ and $g\ls f$, then we write $f\sim g$.
For any measurable subset $E$ of $\rn$, we denote by $E^\com$ the \emph{set $\rn\bh E$}
and by $\chi_E$ the characteristic function of $E$.
For any $a\in\mathbb{R}$, the \emph{symbol} $\lfloor a\rfloor$
denotes the largest integer $m$ such that $m\le a$.
Let $\nn:=\{1,\,2,\,\ldots\}$ and $\zz_+:=\nn\cup\{0\}$.
Let $\rnn:=\rn\times(0,\fz)$. For any $\az\in(0,\,\fz)$ and $x\in\rn$, define
\begin{equation}\label{eq bgz}
\bgz_\az(x):=\{(y,\,t)\in\rnn:\ |y-x|<\az t\}.
\end{equation}
If $\az=1$, we simply write $\bgz(x)$ instead of $\bgz_\az(x)$.

For any ball $B:=B(x_B,r_B)\st\rn$ with $x_B\in\rn$ and
$r_B\in(0,\,\fz)$, $\az\in(0,\fz)$ and $j\in\nn$,
we let $\az B:=B(x_B,\az r_B)$,
\begin{align}\label{eq ujb}
U_0(B):=B\ \ \ \text{and}\ \ \ U_j(B):=(2^jB)\setminus (2^{j-1}B).
\end{align}
For any $p\in[1,\,\fz]$, $p'$ denotes its conjugate number,
namely, $1/p+1/p'=1$.

Let $\cs(\rn)$ be the \emph{space of all Schwartz functions}, equipped with the well-known
topology determined by a countable family of seminorms, and $\cs'(\rn)$
the \emph{space of all Schwartz distributions}, equipped with the weak-$\ast$ topology.
For any $r\in(0,\,\fz)$, denote by $L^r_{\loc}(\rn)$ the set of all
\emph{locally $r$-integrable functions} on $\rn$ and, for any measurable
set $E\st\rn$, let $L^r(E)$ be the set of all measurable functions $f$
on $E$ such that $\|f\|_{L^r(E)}:=[\int_E |f(x)|^r\,dx]^{1/r}<\fz.$

\section{The weak Hardy space $W\!H_L^{p(\cdot)}(\rn)$\label{s-2}}
In this section, we first make two assumptions on considered operators $L$,
which are used through the whole
article and then introduce the weak Hardy space associated with the operator $L$
after recalling some notions about the (weak) variable Lebesgue spaces on
$\rn$.

\subsection{Two assumptions of operators $L$}

Before giving the assumptions on operators $L$ studied in this article,
we first recall some knowledge about
bounded holomorphic functional calculi introduced by McIntosh \cite{m86}
(see also \cite{adm96}).

Let $\omega\in[0,\,\pi)$. The \emph{closed} and the \emph{open $\omega$ sectors},
$S_{\oz}$ and $S_{\oz}^0$, are defined, respectively, by setting
\begin{equation*}
S_{\oz}:=\{z\in\mathbb{C}:\ |\arg z|\le\oz\}\cup\{0\}
\quad\mathrm{and}\quad
S^0_{\oz}:=\{z\in\mathbb{C}\setminus\{0\}:\ |\arg z|<\oz\}.
\end{equation*}
A closed and densely defined operator $T$ in $L^2(\rn)$ is said to be of \emph{type $\oz$} if
\begin{enumerate}
\item[(i)] the spectrum $\sigma(T)$ of $T$ is contained in $S_\oz$.

\item[(ii)] for any $\tz\in(\oz,\,\pi)$, there exists a positive constant $C_{(\tz)}$
such that, for any $z\in \mathbb{C}\setminus S_\tz$,
$$|z|\lf\|(zI-T)^{-1}\r\|_{\mathcal{L}(L^2(\rn))}\le C_{(\tz)},$$
here and hereafter, $\mathcal{L}(L^2(\rn))$ denotes the set of all continuous
linear operators from $L^2(\rn)$ to itself and,
for any $S\in\mathcal{L}(L^2(\rn))$,
the operator norm of $S$ is denoted by $\|S\|_{\mathcal{L}(L^2(\rn))}$.
\end{enumerate}

For any $\mu\in(0,\,\pi)$, define
$$H_\fz(S_\mu^0):=\lf\{f:\ S_\mu^0\to\mathbb{C}\ \text{is holomorphic and}\
\|f\|_{L^\fz(S_\mu^0)}<\fz\r\}$$
and
\begin{align*}
\Psi(S_\mu^0):=\lf\{f\in H_\fz(S_\mu^0):\ \exists\,\az,\,C\in(0,\,\fz)\ \text{such that}\
|f(z)|\le \frac{C|z|^\az}{1+|z|^{2\az}},\ \forall z\in S_\mu^0\r\}.
\end{align*}

For any $\oz\in[0,\,\pi)$, let $T$ be a one-to-one operator of type $\oz$ in $L^2(\rn)$.
For any $\psi\in\Psi(S_\mu^0)$ with $\mu\in(\oz,\,\pi)$,
the operator $\psi(T)\in\mathcal{L}(L^2(\rn))$ is defined by setting
\begin{equation}\label{eq function}
\psi(T):=\int_\gamma\psi(\xi)(\xi I-T)^{-1}\,d\xi,
\end{equation}
where $\gamma:=\{re^{i\nu}:\ r\in(0,\,\fz)\}\cup\{re^{-i\nu}:\ r\in(0,\,\fz)\}$,
$\nu\in(\omega,\,\mu)$, is a curve consisting of two rays parameterized anti-clockwise.
It is easy to see that the integral in \eqref{eq function} is absolutely
convergent in $L^2(\rn)$ and the definition of $\psi(T)$ is independent of
the choice of $\nu\in(\oz,\,\mu)$ (see \cite[Lecture 2]{adm96}).
It is well known that the above holomorphic functional calculus defined on
$\Psi(S_\mu^0)$ can be extended to $H_\fz(S_\mu^0)$ by a limiting procedure
(see \cite{m86}).
Let $0\le\omega<\mu<\pi$. Recall that the operator $T$ is said to have a
\emph{bounded holomorphic functional calculus} in $L^2(\rn)$ if
there exists a positive constant $C_{(\omega,\mu)}$, depending on $\omega$ and $\mu$,
such that, for any $\psi\in H_\fz(S_\mu^0)$,
\begin{equation}\label{e2.1x}
\|\psi(T)\|_{\mathcal{L}(L^2(\rn))}\le C_{(\omega,\mu)}\|\psi\|_{L^\fz(S_\mu^0)}.
\end{equation}
By \cite[Theorem F]{adm96}, we know that, if \eqref{e2.1x} holds true for some
$\mu\in (\omega,\pi)$, then it also holds true for any $\mu\in (\omega,\pi)$.

We now make the following two assumptions on the operator $L$.

\begin{assumption}\label{as-a}
$L$ is a one-to-one operator of type $\omega$ in $L^2(\rn)$, with $\oz\in[0,\,\pi/2)$,
and has a bounded holomorphic functional calculus.
\end{assumption}

\begin{assumption}\label{as-b}
The semigroup $\{e^{-tL}\}_{t>0}$ generated by $L$
satisfies the \emph{Davies-Gaffney estimates},
namely, there exist positive constants $C$ and $c$ such that, for any closed subsets
$E$ and $F$ of $\rn$ and $f\in L^2(\rn)$ with $\supp f\st E$,
\begin{align}\label{eq-dg}
\lf\|e^{-tL}(f)\r\|_{L^2(F)}\le Ce^{-c\frac{[\dist(E,\,F)]^2}{t}}\|f\|_{L^2(E)}.
\end{align}
Here and hereafter, for any subsets $E$ and $F$ of $\rn$,
$$\dist(E,\,F):=\inf\{|x-y|:\ x\in E,\,y\in F\}.$$
\end{assumption}

\begin{rem}\label{r-2-1}
\begin{enumerate}
\item[(i)] Let $T$ be a one-to-one operator of type $\omega$ in $L^2(\rn)$ with $\oz\in[0,\,\pi/2)$.
Then it follows from \cite[Theorem 1.45]{ou05}
that $T$ generates a bounded holomorphic semigroup $\{e^{-zT}\}_{z\in S^0_{\pi/2-\oz}}$
on the open sector $S^0_{\pi/2-\oz}$.

\item[(ii)] Let $L$ be an operator satisfying Assumptions \ref{as-a} and \ref{as-b}.
Then, for any $k\in\zz_+$, the family $\{(tL)^ke^{-tL}\}_{t>0}$ of operators satisfies the
Davies-Gaffney estimates \eqref{eq-dg} (see, for example, \cite[Remrk 2.5(i)]{yzz17}).
In particular, for any $k\in\zz_+$ and $t\in(0,\fz)$, the operator $(tL)^ke^{-tL}$ is bounded
on $L^2(\rn)$.
\end{enumerate}
\end{rem}

\begin{rem}\label{r-3-30}
Following \cite[Remark 2.6]{yzz17}, examples of operators satisfying
Assumptions \ref{as-a} and \ref{as-b} include:
\begin{enumerate}
\item[(i)] the second-order divergence form elliptic operator
with complex bounded coefficients as in \cite{hm09,hmm11}.
Recall that a matrix $A(x):=(A_{ij}(x))_{i,j=1}^n$ of complex-valued measurable functions on $\rn$
is said to satisfy the \emph{elliptic condition} if there exist positive constants
$\lz\le \Lambda$ such that, for almost every $x\in\rn$ and any $\xi,\,\eta\in\cc^n$,
$$\lz|\xi|^2\le\Re\langle A(x)\xi,\,\xi\rangle\ \ \ \text{and}\ \ \
|\langle A(x)\xi,\,\eta\rangle|\le\Lambda|\xi||\eta|,$$
where $\langle\cdot,\,\cdot\rangle$ denotes the \emph{inner product} in $\cc^n$ and
$\Re\xi$ denotes the \emph{real part} of $\xi$.
For such a matrix $A(x)$, the associated \emph{second-order divergence form elliptic operator $L$}
is defined by setting, for any $f\in D(L)$,
\begin{equation}\label{eq op}
Lf:=-{\rm div}(A\nabla f),
\end{equation}
which is interpreted in the weak sense via a sesquilinear form.
Here and hereafter, $D(L)$ denotes the domain of $L$.

\item[(ii)] the one-to-one non-negative self-adjoint operator $L$
having the \emph{Gaussian upper bounds}, namely,
there exist positive constants $C$ and $c$ such that, for any $t\in(0,\,\fz)$ and $x,\,y\in\rn$,
\begin{equation*}
|p_t(x,\,y)|\le\frac{C}{t^{n/2}}\exp\lf(-c\frac{|x-y|^2}{t}\r),
\end{equation*}
where $p_t$ denotes the kernel of $e^{-tL}$.

\item[(iii)] the \emph{Schr\"{o}dinger operator} $-\Delta+V$ on $\rn$ with the non-negative
potential $V\in L^1_{\rm loc}(\rn)$ which is not identically zero.
\end{enumerate}
\end{rem}

\subsection{The definition of variable weak Hardy spaces $W\!H_L^{p(\cdot)}(\rn)$}

A measurable function $p(\cdot):\ \rn\to[0,\fz)$ is called a
\emph{variable exponent}.
Denote by $\cp(\rn)$ the \emph{collection of all variable exponents}
$p(\cdot)$ satisfying
\begin{align}\label{2.1x}
0<p_-:=\mathop\mathrm{ess\,inf}_{x\in \rn}p(x)\le
\mathop\mathrm{ess\,sup}_{x\in \rn}p(x)=:p_+<\fz.
\end{align}

For any $p(\cdot)\in\cp(\rn)$, the \emph{variable Lebesgue space} $\lv$ is defined to be the
set of all measurable functions $f$ such that, for some $\lz\in(0,\fz)$,
$\int_\rn[|f(x)|/\lz]^{p(x)}\,dx<\fz$,
equipped with the
\emph{Luxemburg} (also known as the \emph{Luxemburg-Nakano})
\emph{quasi-norm}
\begin{equation*}
\|f\|_{\lv}:=\inf\lf\{\lz\in(0,\fz):\ \int_\rn \lf[\frac{|f(x)|}{\lz}\r]^{p(x)}\le1\r\}.
\end{equation*}

\begin{rem}\label{r-vlp}
 Let $p(\cdot)\in\cp(\rn)$.

\begin{enumerate}
\item[(i)] It is easy to see that, for any $s\in (0,\fz)$ and $f\in\lv$,
$$\lf\||f|^s\r\|_{\lv}=\|f\|_{L^{sp(\cdot)}(\rn)}^s.$$
Moreover, for any $\lz\in{\mathbb C}$ and $f,\ g\in\lv$,
$\|\lz f\|_{\lv}=|\lz|\|f\|_{\lv}$ and
$$\|f+g\|_{\lv}^{\underline{p}}\le \|f\|_{\lv}^{\underline{p}}
+\|g\|_{\lv}^{\underline{p}},$$
here and hereafter,
\begin{align}\label{2.1y}
\underline{p}:=\min\{p_-,1\}
\end{align}
with $p_-$ as in \eqref{2.1x}.
Particularly, when $p_-\in[1,\fz)$,
$\lv$ is a Banach space (see \cite[Theorem 3.2.7]{dhr11}).

\item[(ii)] If there exist $\delta,\,c\in(0,\fz)$ such that  $\int_\rn[|f(x)|/\delta]^{p(x)}\,dx\le c$, then it is easy
to see that $\|f\|_{\lv}\le C\delta$, where $C$ is a positive constant
independent of $\delta$, but depending on $p_-$ (or $p_+$) and $c$.
\end{enumerate}
\end{rem}

A function $p(\cdot)\in\cp(\rn)$ is said to satisfy the
\emph{globally log-H\"older continuous condition}, denoted by $p(\cdot)\in C^{\log}(\rn)$,
if there exist positive constants $C_{\log}(p)$ and $C_\fz$, and
$p_\fz\in\rr$ such that, for any $x,\ y\in\rn$,
\begin{equation}\label{elog}
|p(x)-p(y)|\le \frac{C_{\log}(p)}{\log(e+1/|x-y|)}
\end{equation}
and
\begin{equation}\label{edecay}
|p(x)-p_\fz|\le \frac{C_\fz}{\log(e+|x|)}.
\end{equation}

\begin{defn}
Let $p(\cdot)\in\cp(\rn)$.
The \emph{variable weak Lebesgue space} $W\!L^{p(\cdot)}(\rn)$
is defined to be the set of all measurable functions $f$ such that
\begin{align*}
\|f\|_{WL^{p(\cdot)}(\rn)}:=\sup_{\az\in(0,\fz)}\az\lf
\|\chi_{\{x\in\rn:\ |f(x)|>\az\}}\r\|_{L^{p(\cdot)}(\rn)}<\fz.
\end{align*}
\end{defn}

\begin{rem}\label{r-3-18}
Let $p(\cdot)\in \cp(\rn)$.
\begin{enumerate}
\item[(i)] Then $\|\cdot\|_{\wlv}$
defines a quasi-norm on $\wlv$, namely,
$\|f\|_{\wlv}=0$ if and only if $f=0$ almost everywhere;
for any $\lz\in\mathbb C$ and
$f\in\wlv$, $\|\lz f\|_{\wlv}=|\lz|\|f\|_{\wlv}$ and,
for any $f,\ g\in\wlv$,
$$\|f+g\|_{\wlv}^{\underline{p}}\le 2^{\underline{p}}
\lf[\|f\|_{\wlv}^{\underline{p}}+\|g\|_{\wlv}^{\underline{p}}\r],$$
where $\underline{p}$ is as in \eqref{2.1y}.

\item[(ii)] By the Aoki-Rolewicz theorem (see \cite{Ao42,Ro57}
and also \cite[Exercise 1.4.6]{g09-1}),
we know that there exists a positive constant $v\in(0,1)$
such that, for any $R\in\nn$ and $\{f_j\}_{j=1}^R$,
$$\lf\|\sum_{j=1}^R |f_j|\r\|_{\wlv}^{v}
\le 4\lf[\sum_{j=1}^R \|f_j\|_{\wlv}^{v}\r].$$
\end{enumerate}
\end{rem}

Assume that the operator $L$ satisfies Assumptions \ref{as-a} and \ref{as-b}. For any $k\in\nn$,
the \emph{square function $S_{L,\,k}$ associated with $L$} is defined by setting,
for any $f\in L^2(\rn)$ and $x\in\rn$,
\begin{equation*}
S_{L,\,k}(f)(x):=\lf[\iint_{\bgz(x)}\lf|(t^2L)^ke^{-t^2L}(f)(y)\r|^2\,\frac{dy\,dt}{t^{n+1}}\r]^{1/2},
\end{equation*}
where $\Gamma(x):=\{(y,t)\in\rr_+^{n+1}:\ |y-x|<t\}$.
In particular, when $k=1$, we write $S_L$ instead of $S_{L,\,k}$.
Notice that, for any $k\in\nn$, $S_{L,\,k}$ is bounded on $L^2(\rn)$.
Indeed, by the Fubini theorem, we know that, for any $f\in L^2(\rn)$,
\begin{align}\label{eq-1}
\int_\rn[S_{L,\,k}(f)(x)]^2\,dx
&=\int_\rn\int_0^\fz\int_{|y-x|<t}\lf|(t^2L)^ke^{-t^2L}(f)(y)\r|^2\,\frac{dy\,dt}{t^{n+1}}\,dx\\
&=\int_\rn\int_0^\fz\lf|(t^2L)^ke^{-t^2L}(f)(y)\r|^2\,\frac{dt}{t}\,dy
\ls\|f\|_{L^2(\rn)}^2,\noz
\end{align}
where the last step in \eqref{eq-1} is from \cite[Theorem F]{adm96}
(see also \cite[(4.1)]{hlmmy}).

Now we introduce the variable weak Hardy space $W\!H_L^{p(\cdot)}(\rn)$.

\begin{defn}\label{d-4-9}
Let $p(\cdot)\in C^{\log}(\rn)$ with $p_+\in(0,1]$ and $L$ satisfy Assumptions \ref{as-a} and \ref{as-b}.
A function $f\in L^2(\rn)$ is said to be in $\mathbb{W\!H}_L^{p(\cdot)}(\rn)$ if $S_L(f)\in \wlv$;
moreover, define $\|f\|_{W\!H_L^{p(\cdot)}(\rn)}:=\|S_L(f)\|_{\wlv}$. Then the \emph{variable weak
Hardy space $W\!H_L^{p(\cdot)}(\rn)$, associated to the operator $L$}, is defined to be
the completion of $\mathbb{W\!H}_L^{p(\cdot)}(\rn)$ with respect to the \emph{quasi-norm}
$\|\cdot\|_{W\!H_L^{p(\cdot)}(\rn)}$.
\end{defn}

\begin{rem}
\begin{enumerate}
\item[(i)] When $p(\cdot)\equiv {\rm constant}\in(0,1]$, the space $W\!H_L^{p(\cdot)}(\rn)$
was studied in \cite{cchy15} as a special case. Indeed, in \cite{cchy15}, Cao et al.
assumed that the operator satisfies the $k$-Davies-Gaffney estimates.

\item[(ii)] We point out that, differently from the Hardy space $H^{p(\cdot)}(\rn)$, with
$p(\cdot):\ \rn\to(0,1]$, in which the space $L^2(\rn)\cap H^{p(\cdot)}(\rn)$ is
dense in $H^{p(\cdot)}(\rn)$ (see the proof of \cite[Theorem 4.5]{ns12}),
the space $L^2(\rn)\cap W\!H^{p(\cdot)}(\rn)$ is not dense in the variable weak Hardy space
$W\!H^{p(\cdot)}(\rn)$ even in the constant case (see Fefferman and Soria \cite{fs86} and see also
He \cite{He14}). When $L:=-\Delta$, the variable weak Hardy space
$W\!H_\Delta^{p(\cdot)}(\rn)$ defined as in Definition \ref{d-4-9} coincides with
the space
$$\overline{W\!H^{p(\cdot)}(\rn)\cap L^2(\rn)}^{\|\cdot\|_{W\!H^{p(\cdot)}(\rn)}},$$
namely, the closure of $W\!H^{p(\cdot)}(\rn)\cap L^2(\rn)$ on the
quasi-norm $\|\cdot\|_{W\!H^{p(\cdot)}(\rn)}$ and hence is a proper subspace of
 $W\!H^{p(\cdot)}(\rn)$.
\end{enumerate}
\end{rem}
\section{Variable weak tent spaces $W\!T^{p(\cdot)}(\rr_+^{n+1})$\label{s-3}}

In this section, we first introduce the variable weak tent space $W\!T^{p(\cdot)}(\rr_+^{n+1})$
and then give its atomic decomposition, which is used later to establish atomic and molecular
characterizations of $W\!H_L^{p(\cdot)}(\rn)$.
For any measurable function $f$ on $\rnn$
and $x\in\rn$, define
\begin{equation*}
A(f)(x):=\lf[\iint_{\Gamma(x)}|f(y,\,t)|^2\,dy\,dt\r]^{1/2}.
\end{equation*}
For any $q\in(0,\,\fz)$, the \emph{tent space $T^q(\rnn)$} is defined to be
the space of all measurable functions $f$ such that
$$\|f\|_{T^q(\rnn)}:=\|A(f)\|_{L^q(\rn)}<\fz.$$

\begin{defn}
Let $p(\cdot)\in \cp(\rn)$. The variable weak tent space $W\!T^{p(\cdot)}(\rr_+^{n+1})$ is defined to be
the set of all measurable functions $f$ on $\rr_+^{n+1}$ such that
$$\|f\|_{W\!T^{p(\cdot)}(\rnn)}:=\|A(f)\|_{WL^{p(\cdot)}(\rn)}.$$
\end{defn}

For any open set $O\st\rn$, the \emph{tent} over $O$  is defined by setting
\begin{align*}
\wh{O}:=\lf\{(y,\,t)\in\rnn:\ \dist \lf(y,\,O^\com\r)\geq t\r\}.
\end{align*}

Let $p(\cdot)\in\cp(\rn)$. Recall that a measurable
function $a$ on $\rnn$ is called a \emph{$T^{p(\cdot)}(\rr_+^{n+1})$-atom} if there
exists a ball $B\st\rn$ such that
\begin{enumerate}
\item[(i)] $\supp a\st \wh{B}$;

\item[(ii)] for any $q\in(1,\,\fz)$, $\|a\|_{T^q(\rnn)}\le |B|^{1/q}\|\chi_B\|_{\vp}^{-1}$.
\end{enumerate}

We point out that the notion of $T^{p(\cdot)}(\rr_+^{n+1})$-atoms was first introduced in \cite{zy15}.
For any $p(\cdot)\in\cp(\rn)$ with $0<p_-\le p_+\le 1$,
any sequences $\{\lz_j\}_{j\in\nn}\st\cc$ and $\{B_j\}_{j\in\nn}$ of balls in $\rn$, let
\begin{equation*}
\ca(\{\lz_j\}_{j\in\nn},\,\{B_j\}_{j\in\nn}):=\lf\|\lf\{\sum_{j\in\nn}
\lf[\frac{|\lz_j|\chi_{B_j}}{\|\chi_{B_j}\|_{\vp}}\r]^{p_-}\r\}^{\frac1{p_-}}\r\|_{\vp},
\end{equation*}
where $p_-$ is as in \eqref{2.1x}.

The main result of this section is stated as follows.

\begin{thm}\label{t-WT-atom}
Let $p(\cdot)\in C^{\log}(\rn)$ with $p_+\in(0,1]$. Then, for each $F\in W\!T^{p(\cdot)}(\rnn)$,
there exists a sequence $\{\aij\}_{i\in\zz,j\in\nn}$ of $T^{p(\cdot)}(\rr_+^{n+1})$-atoms
associated, respectively, to the balls $\{B_{i,j}\}_{i\in\zz,j\in\nn}$ such that
\begin{enumerate}
\item[{\rm(i)}] $F=\sum_{i\in\zz,j\in\nn}\lz_{i,j}a_{i,j}$ almost everywhere on $\rnn$, where
$\lz_{i,j}:=2^i\|\chi_{B_{i,j}}\|_{\vp};$

\item[{\rm (ii)}] there exists a positive constant $C$, independent of $F$, such that
 \begin{equation*}
 \sup_{i\in\zz}\ca(\{\lij\}_{j\in\nn},\{\Bij\}_{j\in\nn})\le C\|F\|_{W\!T^{p(\cdot)}(\rr_+^{n+1})};
 \end{equation*}

\item[{\rm(iii)}] there exist $c\in(0,1)$ and $M_0\in\nn$ such that, for any $i\in\zz$,
 $\sum_{j\in\nn}\chi_{c\Bij}\le M_0$.
\end{enumerate}
\end{thm}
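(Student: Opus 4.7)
The plan is to adapt the classical Coifman--Meyer--Stein tent space decomposition, carrying out the cutting level-by-level and then using the Whitney covering to split each level into pieces supported over balls. For each $i\in\zz$, set $O_i:=\{x\in\rn:\ A(F)(x)>2^i\}$ and $O_i^\ast:=\{x\in\rn:\ \cm(\chi_{O_i})(x)>1/2\}$, where $\cm$ is the Hardy--Littlewood maximal operator. Since $O_i^\ast$ is open with $|O_i^\ast|\ls|O_i|<\fz$ (using $F\in W\!T^{p(\cdot)}(\rnn)$ together with the boundedness of $\cm$ on $L^r(\rn)$ for suitable $r$ depending on $p_-$), apply a Whitney-type covering to obtain a family of balls $\{B_{i,j}\}_{j\in\nn}$ with $O_i^\ast=\bigcup_j B_{i,j}$, with uniformly bounded overlap for some fixed dilation, and with $\diam(B_{i,j})\sim\dist(B_{i,j},(O_i^\ast)^\com)$; this last condition provides the constant $c\in(0,1)$ and integer $M_0$ yielding (iii).

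Next, slice the half-space as $\rnn=\bigcup_{i\in\zz}(\widehat{O_i^\ast}\setminus\widehat{O_{i+1}^\ast})$ modulo a null set (for $F\in W\!T^{p(\cdot)}(\rnn)$, one has $A(F)<\fz$ almost everywhere, so the complement of $\bigcup_i\widehat{O_i^\ast}$ carries $F=0$ in $L^2_\loc$ by a standard tent-space argument). Then define
\begin{equation*}
a_{i,j}:=\lz_{i,j}^{-1}\,F\,\chi_{T_{i,j}},\qquad T_{i,j}:=\widehat{B_{i,j}}\cap\lf(\widehat{O_i^\ast}\setminus\widehat{O_{i+1}^\ast}\r)\setminus\bigcup_{k<j}T_{i,k},
\end{equation*}
with $\lz_{i,j}:=2^i\|\chi_{B_{i,j}}\|_{\vp}$. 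By construction $\supp a_{i,j}\st\widehat{B_{i,j}}$, and the decomposition in (i) follows from the partition of $\rnn$. For the $T^q(\rnn)$-size bound, observe that for any $x\notin O_{i+1}^\ast$ (equivalently, a density point of $(O_{i+1})^\com$), one has $A(F\,\chi_{\rnn\setminus\widehat{O_{i+1}^\ast}})(x)\le A(F)(x)\le 2^{i+1}$; a duality argument against $g\in L^{q'}(\rn)$ shows $\|a_{i,j}\|_{T^q(\rnn)}\ls 2^i|B_{i,j}|^{1/q}/\lz_{i,j}=|B_{i,j}|^{1/q}/\|\chi_{B_{i,j}}\|_{\vp}$, which is the desired atomic normalization.

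For (ii), the bounded overlap property of $\{B_{i,j}\}_{j\in\nn}$ gives, for each fixed $i$,
\begin{equation*}
\ca(\{\lz_{i,j}\}_{j\in\nn},\{B_{i,j}\}_{j\in\nn})=2^i\lf\|\lf(\sum_{j\in\nn}\chi_{B_{i,j}}\r)^{1/p_-}\r\|_{\vp}\ls 2^i\|\chi_{O_i^\ast}\|_{\vp}.
\end{equation*}
Since $O_i^\ast\st\{\cm(\chi_{O_i})>1/2\}$, Lemma \ref{l-2-1x} (boundedness of $\cm$ on $L^{p(\cdot)/p_-}(\rn)$, which is available because $p(\cdot)/p_-\in C^{\log}(\rn)$ with $(p(\cdot)/p_-)_->1$ by choosing the exponent appropriately) yields $\|\chi_{O_i^\ast}\|_{\vp}\ls\|\chi_{O_i}\|_{\vp}$, and hence
\begin{equation*}
\sup_{i\in\zz}\ca(\{\lz_{i,j}\}_{j\in\nn},\{B_{i,j}\}_{j\in\nn})\ls\sup_{i\in\zz}2^i\|\chi_{O_i}\|_{\vp}\sim\|A(F)\|_{\wlv}=\|F\|_{W\!T^{p(\cdot)}(\rnn)}.
\end{equation*}

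The main obstacle is the verification that the Whitney construction on $O_i^\ast$ (rather than on $O_i$) is compatible with both the tent-containment needed for $\supp a_{i,j}\st\widehat{B_{i,j}}$ and the maximal function replacement used in the final step; this is precisely where the choice of the enlarged open set and the globally log-H\"older assumption on $p(\cdot)$ cooperate. A secondary technical point is the almost-everywhere identity in (i) on the slice $(\widehat{O_i^\ast}\setminus\widehat{O_{i+1}^\ast})\setminus\bigcup_j\widehat{B_{i,j}}$, which must be shown to have $F=0$ by a standard Fubini argument together with the fact that this slice projects onto a set of measure zero relative to the natural measure $dy\,dt/t^{n+1}$.
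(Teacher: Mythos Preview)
Your outline follows the same level-set-plus-Whitney strategy as the paper, but there is a genuine gap in the way you assemble the pieces $T_{i,j}$. You set
\[
T_{i,j}\subset\widehat{B_{i,j}}\cap\bigl(\widehat{O_i^\ast}\setminus\widehat{O_{i+1}^\ast}\bigr),
\]
and then assert that the leftover set $(\widehat{O_i^\ast}\setminus\widehat{O_{i+1}^\ast})\setminus\bigcup_j\widehat{B_{i,j}}$ is null. This is false: tents do not distribute over unions, so $\bigcup_j\widehat{B_{i,j}}\subsetneq\widehat{\bigcup_jB_{i,j}}=\widehat{O_i^\ast}$ in general. Concretely, if $(y,t)\in\widehat{O_i^\ast}$ with $y$ near the common boundary of two Whitney balls, then $t$ can be comparable to $\dist(y,(O_i^\ast)^\com)$ while $\dist(y,B_{i,j}^\com)$ is tiny for every $j$, so $(y,t)$ lies in no $\widehat{B_{i,j}}$. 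The leftover is an open region of positive $dy\,dt/t^{n+1}$-measure on which $F$ has no reason to vanish, so your ``standard Fubini argument'' cannot rescue the identity in (i).

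The paper avoids this by cutting with \emph{cylinders}, not tents: it takes the Whitney balls $\wz B_{i,j}$ and sets $\Delta_{i,j}:=[\wz B_{i,j}\times(0,\fz)]\cap(\widehat{O_i^\ast}\setminus\widehat{O_{i+1}^\ast})$. The cylinders trivially cover $O_i^\ast\times(0,\fz)\supset\widehat{O_i^\ast}$, and the Whitney geometry (Lemma~\ref{l-1-11x}(iv)) then forces $\Delta_{i,j}\subset\widehat{3C_0\wz B_{i,j}}$, so the atoms are supported in tents over the \emph{dilated} balls $B_{i,j}:=3C_0\wz B_{i,j}$. If you want to keep tents from the outset, you must take tents over a fixed dilate of the Whitney balls, not the balls themselves.

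A second, related issue is your $T^q$-bound. The pointwise estimate $A(F\chi_{T_{i,j}})(x)\le A(F)(x)\le 2^{i+1}$ is only available for $x\notin O_{i+1}$, yet the support of $A(a_{i,j})$ may meet $O_{i+1}$. The paper handles this with Lemma~\ref{l-1-11}, which converts the integral over $(\widehat{O_{i+1}^\ast})^\com=\mathcal R_1((O_{i+1}^\ast)^\com)$ into an integral of cone-averages based at points of $(O_{i+1})^\com$; this is exactly why the enlarged set $O_{i+1}^\ast$ is built with the specific density parameter $\gamma$ coming from that lemma rather than an arbitrary threshold like $1/2$. Your duality sketch needs either this lemma or an equivalent aperture-change argument to close.
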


To prove Theorem \ref{t-WT-atom}, we need some known facts as follows
(see, for example, \cite{jy10}).
Let $F$ be a closed subset of $\rn$ and $O:=F^\complement$.
Assume that $|O|<\fz$.
For any fixed $\gamma\in(0,1)$, $x\in\rn$ is said to have the
\emph{global $\gamma$-density} with respect to $F$ if, for any $t\in(0,\fz)$,
${|B(x,t)\cap F|}/{|B(x,t)|}\ge\gz$. Denote by $F_\gz^\ast$ the
\emph{set of all such $x$} and let $O_\gz^\ast:=(F_\gz^\ast)^\complement$.
Then $O\subset O_\gamma^\ast$,
$$O_\gamma^\ast=\lf\{x\in\rn:\ \cm(\chi_O)(x)>1-\gamma\r\},$$
$O_\gamma^\ast$ is open and there exists a positive constant $C_{(\gamma)}$,
depending on $\gamma$, such that
$|O_\gamma^\ast|\le C_{(\gamma)}|O|$. Here and hereafter,
$\mathcal M$ denotes the usual Hardy-Littlewood
maximal function, namely, for any $f\in L_{\loc}^1(\rn)$ and $x\in\rn$,
\begin{equation}\label{2-1x}
\mathcal M(f)(x):=\sup_{B\ni x}\frac1{|B|}\int_B|f(y)|\,dy,
\end{equation}
where the supremum is taken over all balls of $\rn$ containing $x$.
For any $\nu\in(0,\fz)$ and $x\in\rn$, denote by $\mathcal R_\nu F$ the
\emph{union of all cones with vertices in $F$}, namely,
$\mathcal R_\nu F:=\bigcup_{x\in F}\Gamma_\nu(x)$.

The following lemma is just \cite[Lemma 3.1]{jy10}.

\begin{lem}\label{l-1-11}
Let $v,\ \eta\in(0,\fz)$. Then there exist positive constants $\gamma\in(0,1)$ and $C$ such that, for
any closed subset $F$ of $\rn$ with $F^\com$ having finite measure and any non-negative measurable
function $H$ on $\rnn$,
$$\int_{\mathcal R_v(F_\gamma^\ast)}H(y,t)t^n\,dy\,dt\le C\int_F\lf\{\int_{\gamma_\eta}
H(y,t)\,dy\,dt\r\}\,dx.$$
\end{lem}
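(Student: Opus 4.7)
The plan is to reduce the claimed inequality, via a Fubini-style exchange of integration, to a uniform density lower bound on $|F\cap B(y,\eta t)|$ for points $(y,t)\in\ccr_v(F_\gz^\ast)$. The first move is the identity $\chi_{\bgz_\eta(x)}(y,t)=\chi_{B(y,\eta t)}(x)$, which combined with Fubini's theorem gives
\begin{align*}
\int_F\lf\{\int_{\bgz_\eta(x)}H(y,t)\,dy\,dt\r\}dx
=\iint_{\rnn}H(y,t)\,|F\cap B(y,\eta t)|\,dy\,dt.
\end{align*}
Hence the lemma will follow once I produce $\gz\in(0,1)$ and a positive constant $C'$, both depending only on $v$, $\eta$ and $n$, such that
\begin{align*}
|F\cap B(y,\eta t)|\ge C't^n\quad\text{for every }(y,t)\in\ccr_v(F_\gz^\ast),
\end{align*}
since multiplying by $1/C'$ and restricting the above Fubini identity to $\ccr_v(F_\gz^\ast)$ then yields the claim with $C:=1/C'$.

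The heart of the argument is this density estimate. Fix $(y,t)\in\ccr_v(F_\gz^\ast)$; by definition of $\ccr_v$ there exists $x_0\in F_\gz^\ast$ with $|y-x_0|<vt$. Set $O:=F^\com$ and $r:=(v+\eta)t$. The triangle inequality yields the inclusion $B(y,\eta t)\subset B(x_0,r)$, so, using the density characterisation of $F_\gz^\ast$ in the equivalent form $|O\cap B(x_0,r)|\le(1-\gz)|B(x_0,r)|$,
\begin{align*}
|O\cap B(y,\eta t)|\le|O\cap B(x_0,r)|\le(1-\gz)\,c_n\,(v+\eta)^n t^n,
\end{align*}
where $c_n:=|B(0,1)|$. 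Subtracting from $|B(y,\eta t)|=c_n\eta^n t^n$ produces
\begin{align*}
|F\cap B(y,\eta t)|\ge c_n\lf[\eta^n-(1-\gz)(v+\eta)^n\r]t^n.
\end{align*}
I would then pin $\gz$ close enough to $1$ to force $(1-\gz)(v+\eta)^n<\eta^n/2$, which makes the bracketed quantity at least $\eta^n/2$ and furnishes $C':=c_n\eta^n/2$. Crucially, this choice depends only on $v$, $\eta$ and $n$, not on $F$, $y$ or $t$.

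Combining these pieces,
\begin{align*}
\int_{\ccr_v(F_\gz^\ast)}H(y,t)\,t^n\,dy\,dt
&\le\frac{1}{C'}\iint_{\rnn}H(y,t)\,|F\cap B(y,\eta t)|\,\chi_{\ccr_v(F_\gz^\ast)}(y,t)\,dy\,dt\\
&\le\frac{1}{C'}\int_F\lf\{\int_{\bgz_\eta(x)}H(y,t)\,dy\,dt\r\}dx,
\end{align*}
which is the asserted inequality with $C:=1/C'$. The only real obstacle is the enlargement factor $(v+\eta)/\eta$ coming from the inclusion $B(y,\eta t)\subset B(x_0,r)$: it forces $\gz$ to be tuned quantitatively close to $1$ in a manner depending on both $v$ and $\eta$, and the estimate would collapse if one tried to fix $\gz$ in advance independent of these parameters. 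Since the construction of $F_\gz^\ast$ leaves $\gz\in(0,1)$ at our disposal, this difficulty is purely a bookkeeping matter; everything else is routine Fubini.
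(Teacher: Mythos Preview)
Your proof is correct and is precisely the standard argument for this classical tent-space lemma (going back to Coifman--Meyer--Stein). The paper itself does not supply a proof; it simply cites \cite[Lemma 3.1]{jy10}, so there is no approach to compare against beyond noting that your Fubini-plus-density argument is exactly what underlies the cited reference.
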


We also need the following well-known Whitney-type covering lemma (see, for example,
\cite[Lemma 2.6]{gly08}).

\begin{lem}\label{l-1-11x}
Let $\Omega\subset \rn$ be an open set and $C_0\in[1,\fz)$ a positive constant. For any $x\in \rn$,
let $r(x):=d(x,\Omega^\com/(2C_0)$. Then there exist two sequences $\{x_i\}_{i\in\nn}$
of points contained in $\Omega$ and $\{r_i\}_{i\in\nn}:=\{r(x_i)\}_{i\in\nn}$ of positive numbers
such that
\begin{enumerate}
\item[\rm(i)] $\{B(x_i, \frac{r_i}5)\}_{i\in\nn}$ are disjoint;
\item[\rm(ii)] $\Omega=\bigcup_{i\in\nn}B(x_i,r_i)$;
\item[\rm(iii)] for any $i\in\nn$, $B(x_i,C_0r_i)\subset \Omega$;
\item[\rm(iv)]for any $x\in B(x_i,C_0r_i)$, $C_0 r_i\le d(x,\Omega^\com)\le 3C_0r_i$;
\item[\rm(v)] for any $i\in\nn$, there exists a point $x_i^\ast\in \Omega^\com$ such that
$d(x_i^\ast,x_i)<3C_0r_i$;
\item[\rm(vi)] there exists a positive constant $M_0$ such that, for any $x\in\Omega$,
$$\sum_{i\in\nn}\chi_{B(x_i,C_0r_i)}(x)\le M_0.$$
\end{enumerate}
\end{lem}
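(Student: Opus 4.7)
The plan is to carry out a standard greedy Vitali--Whitney selection on the family $\{B(x,r(x)/5):\ x\in\Omega\}$, preferring balls of larger radii. More concretely, partition $\Omega$ into dyadic generations $\Omega_k:=\{x\in\Omega:\ 2^{-k-1}<r(x)\le 2^{-k}\}$ for $k\in\zz$, and process generations in order of increasing $k$ (that is, from largest $r$ to smallest). In each $\Omega_k$, use Zorn's lemma to extract a maximal subcollection $\{x_{k,j}\}_{j}$ of $\Omega_k$ such that the balls $\{B(x_{k,j},r(x_{k,j})/5)\}_{j}$ are pairwise disjoint, and each of them is also disjoint from every ball chosen at an earlier generation. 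Enumerate all selected points as a single sequence $\{x_i\}_{i\in\nn}$; this is countable because the selected balls are pairwise disjoint with positive Lebesgue measure and $\rn$ is $\sigma$-finite.

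Property (i) then holds by construction. For (ii), fix $x\in\Omega\cap\Omega_k$. If $x$ itself was not chosen, then by the maximality of the selection at generation $k$, the ball $B(x,r(x)/5)$ must meet some previously selected $B(x_j,r_j/5)$, where $r_j$ belongs to an earlier or the same generation; hence $r_j>r(x)/2$. Consequently $|x-x_j|<r(x)/5+r_j/5<2r_j/5+r_j/5<r_j$, so $x\in B(x_j,r_j)$, proving $\Omega=\bigcup_{i\in\nn}B(x_i,r_i)$.

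Properties (iii), (iv) and (v) follow immediately from $r_i=d(x_i,\Omega^\com)/(2C_0)$ together with the triangle inequality: $C_0 r_i=d(x_i,\Omega^\com)/2<d(x_i,\Omega^\com)$ gives $B(x_i,C_0 r_i)\st\Omega$, which is (iii); for $x\in B(x_i,C_0 r_i)$, combining $|x-x_i|\le C_0 r_i$ with $d(x_i,\Omega^\com)=2C_0 r_i$ yields $C_0 r_i\le d(x,\Omega^\com)\le 3C_0 r_i$, which is (iv); and $d(x_i,\Omega^\com)=2C_0 r_i<3C_0 r_i$ lets us pick some $x_i^\ast\in\Omega^\com$ with $|x_i-x_i^\ast|<3C_0 r_i$, which is (v).

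The principal technical point is the finite-overlap property (vi). Fix $x\in\Omega$ and set $I(x):=\{i\in\nn:\ x\in B(x_i,C_0 r_i)\}$. By (iv), every $i\in I(x)$ satisfies $r_i\in[d(x,\Omega^\com)/(3C_0),\ d(x,\Omega^\com)/C_0]$, so all such $r_i$ are mutually comparable with constants depending only on $C_0$; moreover every such $x_i$ obeys $|x-x_i|\le C_0 r_i\le d(x,\Omega^\com)$. Therefore the disjoint balls $\{B(x_i,r_i/5)\}_{i\in I(x)}$ all lie inside a ball of radius $\ls d(x,\Omega^\com)$ centered at $x$, while each of them has Lebesgue measure $\gs[d(x,\Omega^\com)]^n$ with constants depending only on $n$ and $C_0$. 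A standard volume-packing argument then bounds the cardinality of $I(x)$ by a constant $M_0$ depending only on $n$ and $C_0$, completing the proof of (vi).
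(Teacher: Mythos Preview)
The paper does not prove this lemma; it is simply stated as the well-known Whitney-type covering lemma and attributed to \cite[Lemma 2.6]{gly08}. Your argument via greedy Vitali selection over dyadic generations of $r(x)$ is the standard route and is correct. The only (trivial) omission is the reverse inclusion in (ii): you verify $\Omega\subset\bigcup_i B(x_i,r_i)$, but should also note that each $B(x_i,r_i)\subset\Omega$, which is immediate from $r_i\le C_0 r_i$ together with (iii).
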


The following result is just \cite[Lemma 2.6]{zy15}
(For the case when $p_-\in(1,\fz)$, see also \cite[Corollary 3.4]{Iz10}).

\begin{lem}\label{l-2-2}
Let $p(\cdot)\in C^{\log}(\rn)$.
Then there exists a positive constant $C$ such that, for any balls $B_1$, $B_2$
of $\rn$ with $B_1\subset B_2$,
$$C^{-1}\lf(\frac{|B_1|}{|B_2|}\r)^{\frac 1{p_-}}
\le\frac{\lf\|\chi_{B_1}\r\|_{L^{p(\cdot)}(\rn)}}
{\lf\|\chi_{B_2}\r\|_{L^{p(\cdot)}(\rn)}}
\le C\lf(\frac{|B_1|}{|B_2|}\r)^{\frac 1{p_+}}.$$
\end{lem}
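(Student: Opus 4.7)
The plan is to establish both inequalities by working directly with the definition of the Luxemburg quasi-norm. Since $\chi_B$ takes only the values $0$ and $1$, the map $t\mapsto \int_B t^{-p(x)}\,dx$ is continuous and strictly decreasing on $(0,\fz)$; hence $\lambda_B:=\|\chi_B\|_\lv$ is the unique positive number satisfying $\int_B \lambda_B^{-p(x)}\,dx=1$, and the quantity to be controlled becomes simply $\lambda_{B_1}/\lambda_{B_2}$.

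First I would prove the two-sided comparison $\|\chi_B\|_\lv\sim |B|^{1/p_B^\sharp}$ for every ball $B\st\rn$, where $p_B^\sharp:=p(x_B)$ when $r_B\le 1$ and $p_B^\sharp:=p_\fz$ when $r_B>1$. This is the step where the log-H\"older conditions \eqref{elog} and \eqref{edecay} enter crucially: for $r_B\le 1$ and $x\in B$, \eqref{elog} yields the bound $|p(x)-p(x_B)|\cdot|\log r_B|\le C$, whence $r_B^{p(x)-p(x_B)}$ is comparable to $1$ with constants independent of $B$ and $x$; the large-ball case is handled analogously with \eqref{edecay}, since for $x\in B$ with $r_B>1$ one controls the analogous quantity by a constant using the decay of $p(\cdot)$ towards $p_\fz$. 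Plugging these pointwise bounds into the defining equation for $\lambda_B$ and evaluating the resulting integral immediately yields $\lambda_B\sim |B|^{1/p_B^\sharp}$.

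Next, for $B_1\st B_2$, I would use this characterization to factor
$$\frac{\|\chi_{B_1}\|_\lv}{\|\chi_{B_2}\|_\lv}\sim |B_1|^{1/p_{B_1}^\sharp-1/p_{B_2}^\sharp}\lf(\frac{|B_1|}{|B_2|}\r)^{1/p_{B_2}^\sharp}.$$
Since $|B_1|/|B_2|\le 1$ and $p_{B_2}^\sharp\in[p_-,p_+]$, the second factor lies between $(|B_1|/|B_2|)^{1/p_+}$ and $(|B_1|/|B_2|)^{1/p_-}$, which are exactly the exponents demanded by the lemma. The residual factor $|B_1|^{1/p_{B_1}^\sharp-1/p_{B_2}^\sharp}$ is absorbed into the implicit constant by another application of the log-H\"older control to the pair $\{x_{B_1},x_{B_2}\}$, or to $x_{B_1}$ together with $\fz$ via \eqref{edecay} when the two radii sit in very different regimes.

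The main technical obstacle arises in exactly that last situation, namely when $B_1$ has radius $\le 1$ but $B_2$ has radius $>1$, so that $p_{B_1}^\sharp=p(x_{B_1})$ and $p_{B_2}^\sharp=p_\fz$ are taken from distinct regimes of the comparison. Bounding $|B_1|^{1/p_{B_1}^\sharp-1/p_{B_2}^\sharp}$ in this case requires a bridging argument through an intermediate ball of unit radius centered at $x_{B_1}$, combining \eqref{elog} on the small scale and \eqref{edecay} at infinity in sequence. Once this bridging is dispatched, the remaining bookkeeping is routine, and the two claimed inequalities follow simultaneously.
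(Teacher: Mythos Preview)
The paper does not supply a proof of this lemma; it simply quotes the result from \cite[Lemma~2.6]{zy15} (and \cite[Corollary~3.4]{Iz10} for the case $p_->1$). So there is no argument in the paper to compare yours against.

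Your overall strategy---reduce to the comparison $\|\chi_B\|_{\lv}\sim |B|^{1/p_B^\sharp}$ and then estimate the ratio---is the standard one and is correct in outline, but the step where you claim the residual factor $|B_1|^{1/p_{B_1}^\sharp-1/p_{B_2}^\sharp}$ ``is absorbed into the implicit constant by another application of the log-H\"older control to the pair $\{x_{B_1},x_{B_2}\}$'' does not work as stated. The log-H\"older bound \eqref{elog} applied to $x_{B_1},x_{B_2}$ gives
\[
\lf|p(x_{B_1})-p(x_{B_2})\r|\ls\frac1{\log(e+1/|x_{B_1}-x_{B_2}|)}\ls\frac1{\log(e+1/r_{B_2})},
\]
since $x_{B_1}\in B_2$; this controls $|\log|B_2||\cdot|1/p_{B_1}^\sharp-1/p_{B_2}^\sharp|$ but \emph{not} $|\log|B_1||\cdot|1/p_{B_1}^\sharp-1/p_{B_2}^\sharp|$ when $r_{B_1}\ll r_{B_2}$. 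Concretely, take $B_2$ of radius $1/2$ centred at a point where $p=p_+$ and $B_1\st B_2$ of radius $\ez\to0$ centred at a nearby point where $p=p_-$; then your residual equals $|B_1|^{1/p_--1/p_+}\to0$, not a constant.

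The easy repair, in the small-ball regime, is to strengthen your first step: the same log-H\"older argument that gives $\|\chi_B\|_{\lv}\sim|B|^{1/p(x_B)}$ for $r_B\le1$ equally gives $\|\chi_B\|_{\lv}\sim|B|^{1/p(y)}$ for \emph{any} $y\in B$. Choosing a common $y\in B_1\st B_2$ then yields $\|\chi_{B_1}\|_{\lv}/\|\chi_{B_2}\|_{\lv}\sim(|B_1|/|B_2|)^{1/p(y)}$ with the \emph{same} exponent on both sides, and $p_-\le p(y)\le p_+$ finishes. With this adjustment your plan (including the bridging through a unit-radius ball in the mixed-regime case, where one uses $p_\fz$ on the large side) goes through.
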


The following Fefferman-Stein vector-valued inequality of
the maximal operator $\cm$ on the variable Lebesgue space $\lv$
was obtained in \cite[Corollary 2.1]{cf06}.

\begin{lem}\label{l-2-1x}
Let $r\in(1,\fz)$ and $p(\cdot)\in C^{\log}(\rn)$ satisfy $1<p_-\le p_+<\fz$.
Then there exists a positive
constant $C$ such that, for any
sequences $\{f_j\}_{j=1}^\fz$ of measurable functions,
$$\lf\|\lf\{\sum_{j=1}^\fz
\lf[\cm(f_j)\r]^r\r\}^{1/r}\r\|_{\lv}
\le C\lf\|\lf(\sum_{j=1}
^\fz|f_j|^r\r)^{1/r}\r\|_{\lv},$$
where $\cm$ denotes the Hardy-Littlewood maximal operator as in \eqref{2-1x}.
\end{lem}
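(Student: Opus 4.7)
The plan is to derive Lemma \ref{l-2-1x} from the scalar $\lv$-boundedness of $\cm$ (which is Diening's theorem, valid under the hypotheses $p(\cdot)\in C^{\log}(\rn)$ and $1<p_-\le p_+<\fz$) by means of a Rubio de Francia extrapolation argument. The strategy rests on the observation that the target vector-valued inequality is classically known on every weighted Lebesgue space $L^{p_0}(w)$ with $p_0\in(1,\fz)$ and $w\in A_{p_0}$, with constant depending only on $n$, $p_0$, $r$, and $[w]_{A_{p_0}}$; extrapolation will transport this weighted bound to the variable setting.

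First I would reduce to finite sums by monotone convergence, then fix an auxiliary exponent $p_0\in(1,p_-)$, so that $q(\cdot):=p(\cdot)/p_0$ satisfies $1<q_-\le q_+<\fz$ and $q(\cdot)\in C^{\log}(\rn)$. Writing
\begin{equation*}
F:=\lf(\sum_{j=1}^\fz|f_j|^r\r)^{1/r}\quad\text{and}\quad G:=\lf(\sum_{j=1}^\fz[\cm(f_j)]^r\r)^{1/r},
\end{equation*}
I would invoke the standard duality characterization of the variable Lebesgue norm (valid since $q_->1$) to write
\begin{equation*}
\|G\|_{\lv}^{p_0}\sim\lf\|G^{p_0}\r\|_{L^{q(\cdot)}(\rn)}\sim\sup\lf\{\int_\rn G(x)^{p_0}h(x)\,dx:\ h\ge 0,\ \|h\|_{L^{q'(\cdot)}(\rn)}\le 1\r\}.
\end{equation*}

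For each such admissible $h$, I would form the Rubio de Francia iteration operator
\begin{equation*}
\ccr(h):=\sum_{k=0}^\fz\frac{\cm^k(h)}{(2\|\cm\|_{L^{q'(\cdot)}(\rn)\to L^{q'(\cdot)}(\rn)})^k},
\end{equation*}
which is well defined because $\cm$ is bounded on $L^{q'(\cdot)}(\rn)$ (a second application of Diening's theorem, to the dual exponent $q'(\cdot)$, which also belongs to $C^{\log}(\rn)$ with exponents strictly between $1$ and $\fz$). By construction $\ccr(h)$ satisfies $h\le\ccr(h)$, $\|\ccr(h)\|_{L^{q'(\cdot)}(\rn)}\le 2\|h\|_{L^{q'(\cdot)}(\rn)}$, and $\cm(\ccr(h))\le 2\|\cm\|_{L^{q'(\cdot)}\to L^{q'(\cdot)}}\,\ccr(h)$, so that $\ccr(h)\in A_1\subset A_{p_0}$ with $A_{p_0}$-constant independent of $h$. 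The classical weighted Fefferman-Stein vector-valued inequality, applied with weight $w:=\ccr(h)$, then yields
\begin{equation*}
\int_\rn G^{p_0}h\,dx\le\int_\rn G^{p_0}\ccr(h)\,dx\ls\int_\rn F^{p_0}\ccr(h)\,dx\le\lf\|F^{p_0}\r\|_{L^{q(\cdot)}(\rn)}\lf\|\ccr(h)\r\|_{L^{q'(\cdot)}(\rn)},
\end{equation*}
where the last step uses H\"older's inequality in $L^{q(\cdot)}(\rn)$. Combining the above with the duality formula and taking $p_0$-th roots gives the desired bound.

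The main obstacle is the clean verification of the three Rubio de Francia properties in the variable setting, in particular the extraction of a uniform $A_1$-constant from the $L^{q'(\cdot)}$-boundedness of $\cm$; once that is in hand, the scheme is a formal transcription of the Cruz-Uribe--Fiorenza--Martell--P\'erez extrapolation procedure, and the only genuinely analytic inputs required beyond Diening's $L^{p(\cdot)}$-boundedness of $\cm$ are the classical constant-exponent weighted Fefferman-Stein inequality and the norm-conjugate duality for $\lv$, both of which are available in the stated range of exponents.
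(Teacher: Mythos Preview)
Your argument is correct. The paper does not supply its own proof of this lemma; it simply cites \cite[Corollary 2.1]{cf06} (Cruz-Uribe, Fiorenza, Martell and P\'erez). Your proposal is precisely the extrapolation scheme developed in that reference: Diening's boundedness of $\cm$ on $L^{q'(\cdot)}(\rn)$ feeds a Rubio de Francia iteration that manufactures an $A_1$ weight with uniform constant, after which the classical weighted Fefferman--Stein vector-valued inequality and variable-exponent H\"older/duality close the loop. So you have effectively reconstructed the proof behind the citation rather than taken a different route; every step you list (the choice $p_0\in(1,p_-)$, the duality on $L^{q(\cdot)}$, the three standard properties of $\ccr(h)$, and the weighted estimate with $w=\ccr(h)\in A_1\subset A_{p_0}$) is valid in the stated range of exponents.
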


\begin{rem}\label{r-1-30}
\begin{enumerate}
\item[(i)] Let $p(\cdot)\in C^{\log}(\rn)$  and $\beta\in[1,\fz)$. Then, by Lemma \ref{l-2-1x}
and the fact that, for any ball $B\subset\rn$ and $r\in(0,\min\{p_-,1\})$,
$\chi_{\beta B}\le\beta^{\frac{n}{r}}[\cm(\chi_B)]^{\frac{1}{r}}$,
we conclude that there exists a positive
constant $C$ such that, for any sequence $\{B_j\}_{j\in\nn}$ of balls of $\rn$,
$$\lf\|\sum_{j\in\nn}\chi_{\beta B_j}\r\|_{\lv}\le
C\beta^{\frac{n}{r}}\lf\|\sum_{j\in\nn}\chi_{B_j}\r\|_{\lv}.$$

\item[(ii)] Let $p(\cdot)\in C^{\log}(\rn)$ with $p_-\in(1,\fz)$.
Then, as a special case of Lemma \ref{l-2-1x},
we know that there exists a positive constant $C$ such that, for any $f\in \vp$,
$$\|\cm(f)\|_\vp\le C \|f\|_\vp.$$
\end{enumerate}
\end{rem}

\begin{rem}\label{r-1-30x}
Let $p(\cdot)\in C^{\log}(\rn)$
and $\{B_j\}_{j\in\nn}$ be a sequence of balls of $\rn$ satisfying that there exist $c\in(0,1]$
and $M_0\in\nn$ such that $\sum_{j\in\nn}\chi_{cB_j}\le M_0$.
Then, by Remark \ref{r-1-30}(i),
we further conclude that
\begin{align*}
\lf\|\lf(\sum_{j\in\nn}\chi_{B_j}\r)^{\frac1{\min\{p_-,1\}}}\r\|_{\lv}
&=\lf\|\sum_{j\in\nn}\chi_{B_j}\r\|_{L^{\frac{p(\cdot)}{\min\{p_-,1\}}}}^{\frac1{\min\{p_-,1\}}}
\ls \lf\|\sum_{j\in\nn}\chi_{cB_j}\r\|_{L^{\frac{p(\cdot)}{\min\{p_-,1\}}}}^{\frac1{\min\{p_-,1\}}}\\
&\sim\lf\|\lf(\sum_{j\in\nn}\chi_{cB_j}\r)^{\frac1{\min\{p_-,1\}}}\r\|_{\lv}\ls
\lf\|\sum_{j\in\nn}\chi_{cB_j}\r\|_{\lv}
\ls \lf\|\sum_{j\in\nn}\chi_{B_j}\r\|_{\lv},
\end{align*}
where the equivalent positive constants are independent of $\{B_j\}_{j\in\nn}$.
\end{rem}

\begin{proof}[Proof of Theorem \ref{t-WT-atom}]
Let $F\in W\!T^{p(\cdot)}(\rr_+^{n+1})$. For any $i\in\zz$, let
$$O_i:=\{x\in\rn:\ A(F)(x)>2^i\}.$$
Then, for any $i\in\zz$, $O_{i+1}\subset O_i$.
Since $F\in W\!T^{p(\cdot)}(\rr_+^{n+1})$, it follows that
\begin{align*}
|O_i|&=\int_{\rn}\lf[\frac{\chi_{O_i}(x)}{\|\chi_{O_i}\|_{\vp}}\r]^{p(x)}
\|\chi_{O_i}\|_{\vp}^{p(x)}\,dx
\le\max\lf\{\|\chi_{O_i}\|_{\vp}^{p_-},\|\chi_{O_i}\|_{\vp}^{p_+}\r\}\\
&\le \max\lf\{2^{-ip_-},2^{-ip_+}\r\}\|F\|_{W\!T^{p(\cdot)}(\rnn)}<\fz.
\end{align*}
Let $\gamma$ be as in Lemma \ref{l-1-11}. In what follows, we denote $(O_i)_\gamma^\ast$ simply by
$O_i^\ast$.
For each $i\in\zz$, using Lemma \ref{l-1-11x}, we obtain a Whitney-type covering
$\{\wBij\}_{j\in\nn}$ of $O_i^\ast$. For any $i\in\zz$ and $j\in\nn$, let
$$\Delta_{i,j}:=\lf[\wBij\times(0,\fz)\r]\cap
\lf(\widehat{O_i^\ast}\backslash \widehat{O_{i+1}^\ast}\r),$$
$$a_{i,j}(y,t):=2^{-i}\|\chi_{\Bij}\|_{\vp}^{-1}F(y,t)\chi_{\Delta_{i,j}}(y,t),\quad \forall\,(y,t)\in\rnn,$$
and $\lij:=2^i\|\chi_{\Bij}\|_{\vp}$, where $\widehat{O_i^\ast}$ denotes the tent over $O_i^\ast$.
By an argument similar to that used in the proof of \cite[Theorem 3.2]{hyy}, we find that
$$\supp F\subset \lf[\lf(\bigcup_{i\in\zz}\widehat{O_i^\ast}\r)\cup E\r],$$
where $E\subset \rnn$ satisfies $\int_E\,\frac{dy\,dt}t=0$. From this, we further deduce that
\begin{equation}\label{1-12x}
F=\sum_{i\in\zz}\sum_{j\in\nn}\lij a_{i,j}
\end{equation}
almost everywhere on $\rnn$.

Next we show that, for each $i\in\zz$ and $j\in\nn$, $a_{i,j}$ is a
$T^{p(\cdot)}(\rr_+^{n+1})$-atom
supported on $$\widehat{3C_0\wBij}=:\Bij:=B(x_{\Bij},r_{\Bij}),$$
where $C_0$ is as in Lemma \ref{l-1-11x}.
Indeed, by Lemma \ref{l-1-11x}(iv) and
the definitions of $\Delta_{i,j}$ and $\widehat{O_i^\ast}$,
we conclude that $\Delta_{i,j}\subset \Bij$, namely,
$\supp \aij\subset \Bij$. Moreover,
for any $(y,t)\in \Delta_{i,j}$, by the fact that $\Delta_{i,j}\subset\widehat{O_i^\ast}$
and Lemma \ref{l-1-11x}(iv) again, we easily find that
$t\le \dist(y, (O_i^\ast)^\com)<r_{\Bij}$.
Then, by this, Lemma \ref{l-1-11}, the H\"older inequality and the fact that
$$\Delta_{i,j}\subset \lf(\widehat{O_{i+1}^\ast}\r)^\complement
=\mathcal R_1\lf((O_{i+1}^\ast)^\com\r),$$
we know that,
for any $H\in T^{q'}(\rnn)$ with $q\in(1,\fz)$ and $\|H\|_{T^{q'}(\rnn)}\le 1$,
\begin{align*}
|\langle a_{i,j},H\rangle|
:=&\lf|\int_{\rnn} a_{i,j}(y,t)H(y,t)\chi_{\Delta_{i,j}}(y,t)\,\frac{dy\,dt}{t}\r|\\
\le &\int_{\mathcal R_1(\rn \backslash O_{i+1}^\ast)}
|a_{i,j}(y,t)H(y,t)|\chi_{\Delta_{i,j}}(y,t)\,\frac{dy\,dt}{t}\\
\ls &\int_{\rn \backslash O_{i+1}^\ast}\lf\{\int_0^{r_{\Bij}}\int_{|y-x|<t}
|a_{i,j}(y,t)H(y,t)|\chi_{\Delta_{i,j}}(y,t)\,\frac{dy\,dt}{t^{n+1}}\r\}\,dx\\
\sim&\int_{(1+3C_0)\wBij\backslash O_{i+1}}\lf\{\int_0^{r_{\Bij}}\int_{|y-x|<t}
|a_{i,j}(y,t)H(y,t)|\chi_{\Delta_{i,j}}(y,t)\,\frac{dy\,dt}{t^{n+1}}\r\}\,dx\\
\ls& \lf\{\int_{(1+3C_0)\wBij\backslash O_{i+1}}|A(\aij)(x)|^q\,dx\r\}^\frac1q
\|A(H)\|_{L^{q'}(\rn)}\\
\ls& 2^{-i}\|\chi_\Bij\|_{\vp}^{-1}\|H\|_{T^{q'}(\rnn)}\lf\{\int_{(1+3C_0)\wBij
\backslash O_{i+1}}|A(F)(x)|^q\,dx\r\}^\frac1q
\ls |\Bij|^\frac1q\|\chi_{\wBij}\|_{\vp}^{-1},
\end{align*}
where we used the fact that $A(F)(y,t)\le 2^{i+1}$ for any $(y,t)\in O_{i+1}^\com$ in the last
inequality.
From this, Lemma \ref{l-2-2} and the dual relation $(T^q(\rnn))^\ast=T^{q'}(\rnn)$ (see \cite{cms85}),
where $(T^q(\rnn))^\ast$ denotes
the dual space of $T^q(\rnn)$, we deduce that, for any $q\in(1,\fz)$,
$$\|\aij\|_{T^q(\rnn)}\ls |\Bij|^\frac1q\|\chi_{\wBij}\|_{\vp}^{-1}
\sim|\Bij|^\frac1q\|\chi_{\Bij}\|_{\vp}^{-1}.$$
 Thus, $\aij$ is a $T^{p(\cdot)}(\rr_+^{n+1})$-atom
associated to the ball $\Bij$, which, combined with \eqref{1-12x}, implies (i).

To show (ii), by Remark \ref{r-1-30}(i) and Lemma \ref{l-1-11x}, we find that, for any $i\in\zz$,
\begin{align*}
\ca(\{\lij\}_{j\in\nn},\{\Bij\}_{j\in\nn})
&=\lf\|\lf\{2^{ip_-}\sum_{j\in\nn}\chi_{\Bij}\r\}^{\frac1{p_-}}\r\|_{\vp}\\
&\ls 2^i\lf\|\lf\{\sum_{j\in\nn}\chi_{\frac1{5}\wBij}\r\}^{\frac1{p_-}}\r\|_{\vp}
\ls 2^i\lf\|\chi_{O_i^\ast}\r\|_{\vp},
\end{align*}
which, together with Remark \ref{r-1-30}(ii) and
the fact that $\chi_{O_i^\ast}\ls [\mathcal M(\chi_{O_i}^r)]^\frac1r$ with $r\in(0,p_-)$,
further implies that
\begin{align*}
\ca(\{\lij\}_{j\in\nn},\{\Bij\}_{j\in\nn})
\ls 2^i\lf\|\mathcal M(\chi_{O_i}^r)\r\|_{L^{\frac{p(\cdot)}r}(\rn)}^\frac1r
\ls 2^i\|\chi_{O_i}\|_{\vp}\ls \|F\|_{W\!T^{p(\cdot)}(\rr_+^{n+1})}.
\end{align*}
Therefore, the conclusion (ii) holds true.

The conclusion of (iii) is just an immediate consequence of Lemma \ref{l-1-11x}(vi). This finishes
the proof of Theorem \ref{t-WT-atom}.
\end{proof}

\section{Molecular characterization of $W\!H_L^{p(\cdot)}(\rn)$\label{s-4}}
In this section, we establish a molecular characterization
of $W\!H_L^{p(\cdot)}(\rn)$ (see Theorem \ref{t-mol}) by using the atomic decomposition
of variable weak tent spaces obtained in Theorem \ref{t-WT-atom}.

\begin{defn}\label{d-1-17}
Assume $M\in\nn$ and $\uc\in(0,\fz)$. A function $m\in L^2(\rn)$ is called a \emph{$(p(\cdot),\,M,\,\varepsilon)_L$-molecule}
if $m\in R(L^M)$ (the range of $L^M$) and there exists a ball $B:= B(x_B,\,r_B)\st\rn$,
with $x_B\in\rn$ and $r_B\in(0,\,\fz)$, such that, for any $k\in\{0,\,\ldots,\,M\}$
and $j\in\zz_+$,
\begin{equation*}
\lf\|(r_B^{-2}L^{-1})^k (m)\r\|_{L^2(\ujb)}\le
2^{-j\varepsilon}|2^j B|^{1/2}\|\chi_B\|^{-1}_{\vp},
\end{equation*}
where $U_j(B)$ is as in \eqref{eq ujb}.
\end{defn}

\begin{rem}\label{r-4-1}
Let $m$ be a $(p(\cdot),\,M,\,\uc)_L$-molecule as in Definition \ref{d-1-17}
associated to the ball $B\st \rn$. If $\uc\in(\frac n2,\fz)$, then it is easy to see that,
for any $k\in\{0,\dots,M\}$,
\begin{align*}
\lf\|(r_B^{-2}L^{-1})^k(m)\r\|_{L^2(\rn)}\le C |B|^{1/2}\|\chi_B\|_{L^{p(\cdot)}(\rn)}^{-1}
\end{align*}
with $C$ being a positive constant independent of $m$, $k$ and $B$.
\end{rem}
\begin{defn}\label{d-m-Hardy}
Let $p(\cdot)\in C^{\log}(\rn)$ with $p_+\in(0,1]$, $M\in\nn$ and $\varepsilon\in(0,\,\fz)$.
Assume that $\{m_{i,j}\}_{i\in\zz,j\in\nn}$ is a family of $(p(\cdot),M,\varepsilon)_L$-molecules
associated, respectively, to balls $\{\Bij\}_{i\in\zz,j\in\nn}$ of $\rn$
and numbers $\{\lij\}_{i\in\zz,j\in\nn}$ satisfying
\begin{enumerate}
\item[(i)] for any $i\in\zz$ and $j\in\nn$, $\lij:=2^i\|\chi_{\Bij}\|_{\vp}$;
\item[(ii)] there exists a positive constant $C$ such that
$$\sup_{i\in\zz}\ca(\{\lij\}_{j\in\nn},\{\Bij\}_{j\in\nn})\le C;$$
\item[(iii)] there exists a positive constant $c\in(0,1]$ such that, for any $x\in\rn$ and $i\in\zz$,
$$\sum_{j\in\nn}\chi_{c\chi_\Bij}(x)\le M_0$$
with $M_0$ being a positive constant independent of $x$
and $i$.
\end{enumerate}
Then, for any $f\in L^2(\rn)$,
$$f=\sum_{i\in\zz,j\in\nn} \lij m_{i,j}$$
is called a
\emph{weak molecular $(p(\cdot),\,M,\,\varepsilon)_L$-representation} of $f$
if the above summation converges in $L^2(\rn)$.
The \emph{weak molecular Hardy space} $\mhp$ is then defined to be the completion of
the space
\begin{align*}
\mol:=\{f:\ f\ \text{has a weak molecular}\
(p(\cdot),\,M,\,\varepsilon)_L\text{-representation}\}
\end{align*}
with respect to the quasi-norm
\begin{align*}
\|f\|_{W\!H_{L,\,M}^{p(\cdot),\,\uc}(\rn)}
&:=\inf\lf\{\sup_{i\in\zz}\ca\lf(\{\lij\}_{j\in\nn},\,\{\Bij\}_{j\in\nn}\r)\r\},
\end{align*}
where the infimum is taken over all weak molecular
$(p(\cdot),\,M,\,\varepsilon)_L$-representations of $f$ as above.
\end{defn}

Now we have the following first main result of this section.

\begin{prop}\label{p-3-19x}
Let $p(\cdot)\in C^{\log}(\rn)$ with $p_+\in(0,1]$ and $L$ satisfy Assumptions \ref{as-a} and
\ref{as-b}. Assume that $\uc\in(\frac n{p_-},\fz)$ and $M\in\nn\cap(\frac n2[\frac1{p_-}-\frac12],\fz)$.
Then there exists a positive constant $C$ such that, for any
$f\in \mathbb{W\!H}_{L,M}^{p(\cdot),\uc}(\rn)$,
$\|f\|_{W\!H_L^{p(\cdot)}(\rn)}\le C\|f\|_{\mhp}$.
\end{prop}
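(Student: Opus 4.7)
The plan is to fix $f \in \mol$ with a weak molecular representation $f = \sum_{i \in \zz, j \in \nn} \lij m_{i,j}$, convergent in $L^2(\rn)$, and prove, uniformly in $\alpha \in (0,\fz)$,
\begin{equation*}
\alpha \lf\| \chi_{\{x \in \rn:\ S_L(f)(x) > \alpha\}} \r\|_\vp \ls \sup_{i \in \zz} \ca(\{\lij\}_{j \in \nn}, \{B_{i,j}\}_{j \in \nn}).
\end{equation*}
Taking the supremum over $\alpha$ and the infimum over all admissible molecular representations of $f$ then yields the desired bound. For each $\alpha > 0$, pick the unique $i_0 \in \zz$ with $2^{i_0} \le \alpha < 2^{i_0 + 1}$ and decompose $f = F_1 + F_2$, where $F_1 := \sum_{i \le i_0} \sum_{j \in \nn} \lij m_{i,j}$ and $F_2 := \sum_{i > i_0} \sum_{j \in \nn} \lij m_{i,j}$. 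Minkowski's inequality applied to the $L^2(\bgz(x), t^{-n-1}\,dy\,dt)$-norm defining $S_L$ yields the subadditivity $S_L(f) \le S_L(F_1) + S_L(F_2)$, so
\begin{equation*}
\{S_L(f) > \alpha\} \subset \{S_L(F_1) > \alpha/2\} \cup \{S_L(F_2) > \alpha/2\};
\end{equation*}
the quasi-triangle inequality of Remark \ref{r-vlp}(i) then reduces the problem to bounding each $\alpha \|\chi_{\{S_L(F_k) > \alpha/2\}}\|_\vp$ separately by $\sup_i \ca$.

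For the large-index part $F_2$, I would introduce the enlarged set $E_2 := \bigcup_{i > i_0} \bigcup_j 2B_{i,j}$ and split $\{S_L(F_2) > \alpha/2\} \subset E_2 \cup [(\rn \setminus E_2) \cap \{S_L(F_2) > \alpha/2\}]$. On $E_2$, using the quasi-triangle inequality in $\vp$, Remark \ref{r-1-30}(i) to absorb the dilation factor, and the identity
\begin{equation*}
\ca(\{\lij\}_j, \{B_{i,j}\}_j) = 2^i \lf\| \lf(\sum_{j \in \nn} \chi_{B_{i,j}}\r)^{1/p_-} \r\|_\vp,
\end{equation*}
I would obtain $\|\chi_{E_2}\|_\vp \ls \alpha^{-1} \sup_i \ca$ via a geometric sum in $i > i_0$, since $2^{i_0} \sim \alpha$. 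On $\rn \setminus E_2$, Chebyshev's inequality in $L^2$ combined with off-diagonal $L^2$ decay of $S_L(m_{i,j})$ over the annuli $U_k(B_{i,j})$ with $k \ge 2$ yields the required control. These decay estimates follow from writing $(t^2 L)^k e^{-t^2 L} m_{i,j} = t^{-2M} (t^2 L)^{k + M} e^{-t^2 L}(L^{-M} m_{i,j})$, applying the Davies-Gaffney estimates of Remark \ref{r-2-1}(ii) to the family $\{(t^2 L)^{k+M} e^{-t^2 L}\}_{t > 0}$, and using the molecular decay conditions of Definition \ref{d-1-17}, with the splitting of the $t$-integration into $t \le r_{B_{i,j}}$ and $t > r_{B_{i,j}}$ handled separately.

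For the small-index part $F_1$, the scheme is analogous but the summation $\sum_{i \le i_0}$ extends to $-\fz$, so closing the estimate requires the full strength of the decay exponent $\uc > n/p_-$. Following the strategy of the proof of \cite[Theorem 4.4]{yyyz16}, I would invoke Lemma \ref{l-1-30} (Sawano's reduction) to convert $\vp$-norms of sums of molecule-clusters into $L^q$-norms for some $q > 1$, then bound these via off-diagonal $L^2$ estimates on individual molecules. The main obstacle will be the precise tuning of exponents: the constraints $\uc > n/p_-$ and $M > \frac{n}{2}(\frac{1}{p_-} - \frac{1}{2})$ are exactly the thresholds ensuring that, first, the sum over annuli $U_k$ converges geometrically and, second, both the high-index and low-index sums yield a factor $\alpha^{-1} \sup_i \ca$ that cancels the prefactor $\alpha$. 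The bounded-overlap hypothesis $\sum_j \chi_{cB_{i,j}} \le M_0$, exploited through Remark \ref{r-1-30x}, is used repeatedly to convert $\vp$-norms of sums of enlarged characteristic functions back to sums of the original ones at each scale $i$.
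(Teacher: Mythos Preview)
Your overall plan---fix $\alpha$, pick $i_0$ with $2^{i_0}\le\alpha<2^{i_0+1}$, split the molecular sum into $i\le i_0$ and $i>i_0$, and handle each piece separately---matches the paper, and your treatment of the small-index piece $F_1$ via Lemma~\ref{l-1-30} is essentially what the paper does for its term ${\rm G}_{1,1}$.

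The difference is in the large-index piece $F_2$. The paper does \emph{not} carve out an exceptional set; instead it applies a Chebyshev-type inequality directly in $L^{p(\cdot)}$ with exponent $r_1\in(0,1)$,
\[
\lf\|\chi_{\{{\rm G}_2>\alpha/2\}}\r\|_\vp
\ls \alpha^{-r_1}\lf\|\sum_{i\ge i_0}\sum_{j}\bigl[2^i\|\chi_{\Bij}\|_\vp\,S_L(\mij)\bigr]^{r_1}\r\|_\vp,
\]
and then, exactly as for ${\rm G}_{1,1}$, uses subadditivity, the annular $L^2$ bounds $\|S_L(m)\|_{L^2(U_k(B))}\ls 2^{-k\uc}|2^kB|^{1/2}\|\chi_B\|_\vp^{-1}$, Lemma~\ref{l-1-30}, and Remarks~\ref{r-1-30}(i), \ref{r-1-30x} to close. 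The constraint $\uc>n/p_-$ is exactly what makes the choice of $r_1$ and the auxiliary parameter $s$ possible.

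Your exceptional-set approach for $F_2$ has a genuine gap at the step ``Chebyshev's inequality in $L^2$'' on $\rn\setminus E_2$. In the constant-exponent setting this works because $\|\chi_A\|_{L^p}=|A|^{1/p}$, so an $L^2$ bound on $S_L(F_2)$ over $\rn\setminus E_2$ controls the measure of the superlevel set and hence its $L^p$ indicator norm. In the variable-exponent setting there is no such relation: bounding $|A|$ does not bound $\|\chi_A\|_\vp$, because the latter depends on where $A$ sits relative to the exponent function. You therefore cannot pass from an $L^2$ estimate to a $\vp$ estimate without further structure. The fix is to abandon the $L^2$ Chebyshev step and instead run the same $\vp$-Chebyshev plus Sawano-lemma argument you already use for $F_1$ (this is precisely the paper's route for ${\rm G}_{1,2}$); once you do that, the exceptional set $E_2$ becomes unnecessary.
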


To prove Proposition \ref{p-3-19x}, we need the following useful variant of \cite[Lemma 4.1]{Sa13},
which is just \cite[Lemma 4.5]{yyyz16}.

\begin{lem}\label{l-1-30}
Let $p(\cdot)\in C^{\log}(\rn)$, $q\in(\max\{1,p_+\},\,\fz)$ and $r\in(0,p_-]$.
Then there exists a positive constant $C$ such that, for any sequence $\{B_j\}_{j\in\nn}$
of balls in $\rn$, $\{\lz_j\}_{j\in\nn}\st\mathbb{C}$ and measurable
functions $\{a_j\}_{j\in\nn}$ satisfying
that, for any $j\in\nn$, $\supp a_j\st B_j$ and $\|a_j\|_{L^q(\rn)}\le |B_j|^{1/q}$,
\begin{align*}
\lf\|\lf[\sum_{j=1}^\fz|\lz_j a_j|^{r}\r]^{\frac{1}{r}}\r\|_{\vp}
\le C\lf\|\lf[\sum_{j=1}^\fz|\lz_j \chi_{B_j}|^{r}\r]^{\frac{1}{r}}\r\|_{\vp}.
\end{align*}
\end{lem}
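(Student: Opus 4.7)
The plan is to derive $\|S_L(f)\|_\wlv \lesssim \|f\|_\mhp$ from the weak molecular representation of $f$ via three ingredients: a Calder\'on--Zygmund-type level-set decomposition at the dyadic threshold $2^{i_0} \sim \alpha$, a Davies--Gaffney off-diagonal $L^2$-estimate for $S_L$ acting on a single molecule, and Lemma \ref{l-1-30} to convert annular $L^2$-bounds into $\vp$-norm control. After normalization, assume $f = \sum_{i \in \zz, j \in \nn} \lz_{i,j} m_{i,j}$ with $\sup_i \ca(\{\lz_{i,j}\}_j, \{\Bij\}_j) \le 1$. Fix $\alpha \in (0,\fz)$, pick $i_0 \in \zz$ with $2^{i_0} \le \alpha < 2^{i_0+1}$, and decompose $f = f_1 + f_2$ where $f_1 := \sum_{i \le i_0, j} \lz_{i,j} m_{i,j}$. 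The sub-additivity of $S_L$ (via the Minkowski inequality on the $L^2(\bgz(x))$-norm in its definition) combined with the $p_-$-sub-additivity of $\|\cdot\|_\vp$ (Remark \ref{r-vlp}(i), since $p_+ \le 1$) reduces matters to the two uniform-in-$\alpha$ bounds $\alpha\|\chi_{\{S_L(f_k) > \alpha/2\}}\|_\vp \lesssim 1$ for $k = 1, 2$.

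The main technical step is to prove that, for any $(p(\cdot), M, \uc)_L$-molecule $m$ associated to a ball $B$ of radius $r_B$ and for each $k \in \nn$,
\begin{equation*}
\lf\|\chi_{U_k(B)} S_L(m)\r\|_{L^2(\rn)} \lesssim 2^{-k\eta}\,|2^k B|^{1/2}\,\|\chi_B\|_\vp^{-1}
\end{equation*}
for some $\eta > 0$ depending on $M$, $\uc$, and $p_-$. The idea is to split the $t$-integral defining $S_L(m)$ at $t \sim 2^{k-2} r_B$. For small $t$, rewrite $(t^2 L) e^{-t^2 L} m = (t/r_B)^{2M}(t^2 L)^{M+1} e^{-t^2 L} [(r_B^{-2} L^{-1})^M m]$ and apply the Davies--Gaffney estimates for the family $\{(t^2 L)^{M+1} e^{-t^2 L}\}_{t > 0}$ from Remark \ref{r-2-1}(ii) together with the molecular tail bound in Definition \ref{d-1-17}. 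For large $t$, invoke the bounded holomorphic functional calculus of $L$ (Assumption \ref{as-a}) applied to $\psi(z) := z e^{-z}$, combined with the $k = 0$ molecular bound from Remark \ref{r-4-1}. The hypotheses $\uc > n/p_-$ and $M > \frac{n}{2}(\frac{1}{p_-} - \frac{1}{2})$ are arranged so that the resulting $\eta$ beats both the growth factor $2^{kn/p_-}$ from $\|\chi_{2^k B}\|_\vp/\|\chi_B\|_\vp$ (Lemma \ref{l-2-2}) and the cost incurred when Lemma \ref{l-1-30} is used to substitute annular pieces with ball characteristic functions.

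With this off-diagonal bound in hand, for $f_2$ introduce the bad set $E := \bigcup_{i > i_0, j} 2\Bij$. A geometric-series computation using iterated $p_-$-sub-additivity of $\|\cdot\|_\vp$, Remarks \ref{r-1-30} and \ref{r-1-30x}, the bounded-overlap condition (iii) of Definition \ref{d-m-Hardy}, and the identification $\ca(\{\lz_{i,j}\}_j, \{\Bij\}_j) \sim 2^i\|\chi_{\bigcup_j c\Bij}\|_\vp$ yields
\begin{equation*}
\alpha^{p_-}\|\chi_E\|_\vp^{p_-} \lesssim \sum_{i > i_0} 2^{(i_0 - i) p_-} \lesssim 1.
\end{equation*}
Off $E$, use sub-additivity $S_L(f_2) \le \sum_{i > i_0, j, k \ge 0} \lz_{i,j} \chi_{U_k(\Bij)} S_L(m_{i,j})$, the pointwise bound $\chi_{\{g > \alpha/2\}} \le (2g/\alpha)^{p_-}$, and Lemma \ref{l-1-30} (reached via Remark \ref{r-vlp}(i) applied inside $L^{p(\cdot)/p_-}(\rn)$) applied to the $L^2$-normalized atoms $a_{i,j,k} := \|\chi_\Bij\|_\vp\,2^{k\eta}\,\chi_{U_k(\Bij)} S_L(m_{i,j})$ to replace them by $\chi_{2^k \Bij}$; the resulting double geometric series in $i > i_0$ and $k \ge 0$ then converges to $O(1)$ by the hypothesis on $\eta$. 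The estimate for $f_1$ is strictly parallel, with the $i$-sum running over $i \le i_0$; the factor $\alpha^{1 - p_-}$ from the Chebyshev step combines with $2^{ip_-}$ and $2^{-i_0 p_-}$ to produce the convergent series $\sum_{i \le i_0} 2^{(i - i_0) p_-} \lesssim 1$.

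The hard part will be the molecular off-diagonal $L^2$-estimate above, and in particular the bookkeeping to verify that $\uc > n/p_-$ and $M > \frac{n}{2}(\frac{1}{p_-} - \frac{1}{2})$ together suffice for $\eta$ to close both the $i$- and $k$-geometric series. This computation follows the same blueprint as the strong-type molecular estimate for $H^{p(\cdot)}_L(\rn)$ in \cite{yzz17}, adapted to the weak-type Chebyshev input via Lemma \ref{l-1-30}, much as in the proof of \cite[Theorem 4.4]{yyyz16}.
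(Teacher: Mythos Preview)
Your proposal does not address the stated lemma at all. Lemma~\ref{l-1-30} is a purely function-space inequality: given balls $\{B_j\}$, coefficients $\{\lz_j\}$, and functions $a_j$ supported in $B_j$ with $\|a_j\|_{L^q}\le|B_j|^{1/q}$, one must bound the $\ell^r$-sum of the $\lz_j a_j$ in $L^{p(\cdot)}$ by the corresponding sum of $\lz_j\chi_{B_j}$. There is no operator $L$, no square function $S_L$, no molecule, and no weak-type quasi-norm anywhere in the statement. What you have written is a sketch of the proof of Proposition~\ref{p-3-19x} (the inclusion $\mol\subset W\!H_L^{p(\cdot)}(\rn)$), in which Lemma~\ref{l-1-30} is \emph{invoked} as a black box at the key step where annular $L^2$-pieces are replaced by characteristic functions of dilated balls. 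You have confused the tool with the theorem that uses it.

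For the record, the paper does not supply a proof of Lemma~\ref{l-1-30} either: it is quoted verbatim as \cite[Lemma~4.5]{yyyz16}, itself a variant of Sawano's \cite[Lemma~4.1]{Sa13}. The actual argument (in those references) proceeds by raising both sides to the power $r$ so as to work in $L^{p(\cdot)/r}$ with $p(\cdot)/r\ge1$, then dualizing against a function in $L^{(p(\cdot)/r)'}$, applying H\"older's inequality on each $B_j$ to pass from $a_j$ to $\chi_{B_j}$ times a maximal-type average, and closing via the boundedness of the Hardy--Littlewood maximal operator on $L^{(p(\cdot)/r)'}$ guaranteed by $p(\cdot)\in C^{\log}$ and $q>p_+$. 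None of this appears in your proposal.
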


\begin{proof}[Proof of Proposition \ref{p-3-19x}]
Let $f\in \mol$. Then, by its definition, we know that there exists a sequence
$\{\aij\}_{i\in\zz,j\in\nn}$ of $(p(\cdot),M,\uc)$
-molecules, associated to the balls $\{\Bij\}_{i\in\zz,j\in\nn}$ satisfying
Definition \ref{d-m-Hardy}(iii),
such that
\begin{equation}\label{4-21x}
f=\sum_{i\in\zz,j\in\nn}\lij\mij\quad{\rm in}\quad L^2(\rn),
\end{equation}
where $\lij:=2^i\|\chi_\Bij\|_\vp$ for each $i\in\zz$ and $j\in\nn$, and
\begin{equation*}
\sup_{i\in\zz}\ca(\{\lij\}_{j\in\nn},\{\Bij\}_{j\in\nn})\ls \|f\|_{\mhp}.
\end{equation*}
Thus, by \eqref{4-21x} and the fact that $S_L$ is bounded on $L^2(\rn)$, we find that
$$\lim_{N\to\fz}\lf\|S_L(f)-S_L\lf(\sum_{j=1}^N\lij\mij\r)\r\|_{L^2(\rn)}=0,$$
which implies that there exists a subsequence of $S_L(\sum_{j=1}^N\lij\mij)$
(without loss of generality, we may use the same notation as the original sequence)
such that, for almost every $x\in\rn$,
$$S_L(f)(x)=\lim_{N\to\fz}S_L\lf(\sum_{|i|\le N}\sum_{j=1}^N\lij\mij\r)(x).$$
For any given $\az\in(0,\fz)$, let $i_0\in\zz$ be such that $2^{i_0}\le \az<2^{i_0+1}$.
Then, for almost every $x\in\rn$, we have
\begin{align*}
S_L(f)(x)&\le \sum_{i\in\zz,j\in\nn} |\lij|S_L(\mij)(x)\\
&=\lf[\sum_{i=-\fz}^{i_0-1}\sum_{j\in\nn}+
\sum_{i=i_0}^{\fz}\sum_{j\in\nn}\r]|\lij|S_L(\mij)(x)
=:{\rm G}_1(x)+{\rm G}_2(x).
\end{align*}
Moreover, it holds true that
\begin{align}\label{3-29x}
&\|\{x\in\rn:\ S_L(f)(x)>\az\}\|_{\vp}\\
&\hs\ls\lf\|\chi_{\{x\in\rn:\ {\rm G}_1(x)>\az/2\}}\r\|_{\vp}+
\lf\|\chi_{\{x\in\rn:\ {\rm G}_2(x)>\az/2\}}\r\|_{\vp}
=:{\rm G}_{1,1}+{\rm G}_{1,2}.\noz
\end{align}

For ${\rm G}_{1,1}$, let $b\in(0,p_-)$,
$q\in(1,\min\{2,1/b\})$
and $a\in(0,1-\frac1q)$. Then, by the H\"older inequality, we find that, for any $x\in\rn$,
\begin{align*}
{\rm G}_1(x)
&\le \lf(\sum_{i=-\fz}^{i_0-1}2^{iaq'}\r)^\frac1{q'}
\lf\{\sum_{i=-\fz}^{i_0-1}2^{-i aq}\lf[\sum_{j\in\nn}|\lij|S_L(\mij)(x)
\r]^q\r\}^{\frac1q}\\
&=\frac{2^{i_0a}}{(2^{aq}-1)^{1/q}}
\lf\{\sum_{i=-\fz}^{i_0-1}2^{-i aq}\lf[\sum_{j\in\nn}|\lij|S_L(\mij)(x)
\r]^q\r\}^{\frac1q},
\end{align*}
where $q'$ denotes the conjugate exponent of $q$, namely, $\frac1q+\frac1{q'}=1$.
From this, together with the fact that $b\in(0,p_-)$, Remark \ref{r-vlp}(i) and
the well-known inequality that,
for any $d\in(0,1]$ and $\{\theta_j\}_{j\in\nn}\subset \cc$,
\begin{equation}\label{3-4y}
\lf(\sum_{j\in\nn}|\theta_j|\r)^d\le \sum_{j\in\nn}|\theta_j|^d,
\end{equation}
we deduce that
\begin{align}\label{3-29w}
{\rm G}_{1,1}
&\le \lf\|\chi_{\{x\in\rn:\ \frac{2^{i_0a}}{(2^{aq}-1)^{1/q}}
\{\sum_{i=-\fz}^{i_0-1}2^{-i aq}[\sum_{j\in\nn}|\lij|S_L(\mij)(x)
]^q\}^{\frac1q}>2^{i_0-1}\}}\r\|_{\vp}\\
&\ls 2^{-i_0q(1-a)}\lf\|\sum_{i=-\fz}^{i_0-1}2^{-ia q}\lf[
\sum_{j\in\nn}|\lij|S_L(\mij)\r]^q\r\|_{\vp}\noz\\
&\ls 2^{-i_0q(1-a)}\lf\|\sum_{i=-\fz}^{i_0-1}2^{-ia qb}\sum_{k\in\zz_+}\lf[
\sum_{j\in\nn}|\lij|S_L(\mij)\chi_{U_k(\Bij)}\r]^{qb}\r\|_{L^\frac{p(\cdot)}b(\rn)}^\frac1b\noz\\
&\ls 2^{-i_0q(1-a)} \lf[\sum_{i=-\fz}^{i_0-1}\sum_{k\in\zz_+}2^{-iq(a-1)b}
\vphantom{\lf.\|\{\sum_{j\in\nn}\lf[\|\chi_{\Bij}\|_{\vp}
S_L(\mij)\chi_{U_k(\Bij)}\r]^{qb}
\r\}^\frac1b\|_{\vp}^b]^\frac1b}\r.\noz\\
&\quad\hs\times\lf.\lf\|\lf\{\sum_{j\in\nn}\lf[\|\chi_{\Bij}\|_{\vp}
S_L(\mij)\chi_{U_k(\Bij)}\r]^{qb}
\r\}^\frac1b \r\|_{\vp}^b\r]^\frac1b.\noz
\end{align}
By \cite[(3.12)]{yzz17}, we know that there exists a positive constant $C$ such that,
for any $k\in\zz_+$ and $(p(\cdot),M,\uc)_L$-molecule $m$, associated to a ball $B:= B(x_B,\,r_B)$
with $x_B\in\rn$ and $r_B\in(0,\,\fz)$,
\begin{equation*}
\|S_L(m)\|_{L^2(U_k(B))}\le C 2^{-i\uc}|2^kB|^{1/2}\lf\|\chi_B\r\|_{\vp}^{-1}.
\end{equation*}
This further implies that, for any $i\in\zz$ and $j\in\nn$,
\begin{align}\label{1-27x}
\lf\|\lf[\|\chi_\Bij\|_\vp S_L(\mij)\chi_{U_k(\Bij)}\r]^q\r\|_{L^{\frac2q}(\rn)}
\ls 2^{-k\uc q}|2^k\Bij|^\frac q2.
\end{align}
Since $\frac n{\uc q}>p_-$, it follows that we can choose a positive constant
$r$ such that $r\in(\frac n{\uc qb},\frac{p_-}b)$.
By this, \eqref{1-27x}, Lemma \ref{l-1-30} and Remark \ref{r-1-30}(i), we find that
\begin{align}\label{3-29y}
{\rm G}_{1,1}
&\ls 2^{-i_0q(1-a)} \lf[\sum_{i=-\fz}^{i_0-1}\sum_{k\in\zz_+}2^{-iq(a-1)b}2^{-k\uc qb}
\lf\|\sum_{j\in\nn}\chi_{2^k\Bij} \r\|_{L^{\frac{p(\cdot)}b}(\rn)}\r]^{\frac1b}\\
&\ls 2^{-i_0q(1-a)} \lf[\sum_{i=-\fz}^{i_0-1}\sum_{k\in\zz_+}2^{-iq(a-1)b}2^{-k\uc qb}
2^{kn/r}\lf\|\lf(\sum_{j\in\nn}\chi_{\Bij}
\r)^\frac1b \r\|_{\vp}^b\r]^\frac1b\noz\\
&\ls 2^{-i_0q(1-a)} \lf[\sum_{i=-\fz}^{i_0-1}\sum_{k\in\zz_+}2^{-i[q(a-1)+1]b}2^{-k\uc qb}2^{kn/r}
\r]^\frac1b\sup_{i\in\zz}2^i\lf\|\sum_{j\in\nn}\chi_{\Bij}
\r\|_{\vp}\noz\\
&\ls \alpha^{-1}\|f\|_{\mhp},\noz
\end{align}
where, in the last inequality, we used the fact that $a\in(0,1-\frac1q)$.

To estimate G$_{1,2}$, since $\uc\in(\frac n{p_-},\fz)$, it follows that
there exists $s\in(0,1)$ such that
$\frac n{\uc sp_-}<1$. Then, by \eqref{3-4y} and choosing $r_1\in(\frac n{\uc sp_-},1)$, we have
\begin{align}\label{3-28w}
{\rm G}_{1,2}
&\ls \alpha^{r_1}\lf\|\sum_{i=r_0}^\fz\sum_{j\in\nn}
\lf[2^i\|\chi_{\Bij}\|_\vp S_L(\mij)\r]^{r_1}\r\|_\vp\\
&\ls \alpha^{r_1}\lf\|\sum_{i=r_0}^\fz\sum_{j\in\nn}\sum_{k\in\zz_+}
\lf[2^i\|\chi_{\Bij}\|_\vp S_L(\mij)\r]^{r_1p_-}\chi_{U_k(\Bij)}
\r\|_{L^{\frac{p(\cdot)}{p_-}}(\rn)}^{\frac1{p_-}}\noz\\
&\ls \az^{r_1}\lf\{\sum_{i=i_0}^\fz\sum_{k\in\zz_+}2^{ir_1p_-}
\lf\|\lf(\sum_{j\in\nn}\lf[\|\chi_{\Bij}\|_{\vp}
S_L(\mij)\chi_{U_k(\Bij)}\r]^{r_1p_-}
\r)^{\frac1{p_-}}
\r\|_{L^{p(\cdot)}(\rn)}^{p_-}\r\}^{\frac1{p_-}}.\noz
\end{align}
By this, \eqref{1-27x} with $q$ therein replaced by $r_1$, \eqref{3-28w},
Lemma \ref{l-1-30} and Remarks \ref{r-1-30} and \ref{r-1-30x},
we further conclude that, for any $s\in(0,1)$,
\begin{align}\label{3-29z}
{\rm G}_{1,2}
&\ls \az^{r_1}\lf\{\sum_{i=i_0}^\fz\sum_{k\in\zz_+}2^{ir_1p_-}2^{-k\uc r_1p_-}
\lf\|\sum_{j\in\nn}\chi_{2^k\Bij}
\r\|_{L^{\frac{p(\cdot)}{p_-}}(\rn)}\r\}^{\frac1{p_-}}\\
&\ls \az^{r_1}\lf\{\sum_{i=i_0}^\fz\sum_{k\in\zz_+}2^{ir_1p_-}2^{-k\uc r_1p_-}2^{kn/s}
\lf\|\lf[\sum_{j\in\nn}\chi_{\Bij}\r]^{1/p_-}
\r\|_{L^{p(\cdot)}(\rn)}^{p_-}\r\}^{\frac1{p_-}}\noz\\
&\ls \az^{r_1}\lf\{\sum_{i=i_0}^\fz2^{ir_1p_-}
\lf\|\sum_{j\in\nn}\chi_{\Bij}
\r\|_{L^{p(\cdot)}(\rn)}^{p_-}\r\}^{\frac1{p_-}}\noz\\
&\ls \az^{r_1}\lf\{\sum_{i=i_0}^\fz2^{ir_1p_-}2^{-ip_-}\r\}^{\frac1{p_-}}\|f\|_{\mhp}
\ls \az^{-1}\|f\|_{\mhp}.\noz
\end{align}

Finally, combining \eqref{3-29x}, \eqref{3-29y} and \eqref{3-29z}, we obtain
\begin{align*}
\|f\|_{W\!H_L^{p(\cdot)}(\rn)}=\sup_{\az\in(0,\fz)}\az\|\{x\in\rn:\ S_L(f)(x)>\az\}\|_{\vp}
\ls \|f\|_{\mhp}.
\end{align*}
This finishes the proof of Proposition \ref{p-3-19x}.
\end{proof}

The second main result of this section is stated as follows.

\begin{prop}\label{p-3-28}
Let $L$ satisfy Assumptions \ref{as-a} and \ref{as-b}, $p(\cdot)\in C^{\log}(\rn)$ with
$p_+\in(0,1]$,
$M\in\nn$ and $\uc\in(0,\fz)$. If $f\in \whv\cap L^2(\rn)$,
then $f\in \mol$ and $\|f\|_{\mol}\le C\|f\|_{\whv}$ with $C$ being a positive
constant independent of $f$.
\end{prop}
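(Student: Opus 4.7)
\noindent\textit{Proof plan.} The strategy is to use a Calder\'on-type reproducing formula to convert $f$ into a tent-space function, apply the atomic decomposition from Theorem \ref{t-WT-atom}, and turn each tent-atom into an $L$-molecule via a functional-calculus integral operator.

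First, since $L$ satisfies Assumption \ref{as-a}, the McIntosh quadratic estimate/reproducing formula furnishes a positive constant $c_M$ such that
\begin{equation*}
f=c_M\int_0^\fz(t^2L)^{M+1}e^{-t^2L}\bigl((t^2L)e^{-t^2L}f\bigr)\,\frac{dt}{t}
\qquad\text{in }L^2(\rn)
\end{equation*}
for every $f\in L^2(\rn)$. Set $F(y,t):=(t^2L)e^{-t^2L}f(y)$ on $\rnn$; then $A(F)=S_L(f)$ and hence $\|F\|_{W\!T^{p(\cdot)}(\rnn)}=\|f\|_{W\!H_L^{p(\cdot)}(\rn)}$, while \eqref{eq-1} ensures $F\in T^2(\rnn)$. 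Applying Theorem \ref{t-WT-atom} to $F$ produces $T^{p(\cdot)}(\rnn)$-atoms $\aij$, balls $\Bij$ with bounded overlap, and coefficients $\lij=2^i\|\chi_\Bij\|_{\vp}$ such that $F=\sum_{i,j}\lij\aij$ almost everywhere on $\rnn$ and $\sup_i\ca(\{\lij\}_j,\{\Bij\}_j)\ls\|f\|_{W\!H_L^{p(\cdot)}(\rn)}$.

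Next, introduce the functional-calculus integral operator
\begin{equation*}
\Pi_{M,L}(g):=c_M\int_0^\fz(t^2L)^{M+1}e^{-t^2L}\bigl(g(\cdot,t)\bigr)\,\frac{dt}{t},
\end{equation*}
and set $\mij:=\Pi_{M,L}(\aij)$. A dual quadratic-estimate argument, using the bounded holomorphic functional calculus of $L$ and $L^\ast$ from Assumption \ref{as-a}, shows that $\Pi_{M,L}\colon T^2(\rnn)\to L^2(\rn)$ is bounded. Combined with a standard truncation argument that upgrades the atomic decomposition of $F$ to convergence in $T^2(\rnn)$, this yields $f=\Pi_{M,L}(F)=\sum_{i,j}\lij\mij$ in $L^2(\rn)$. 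Since $(t^2L)^{M+1}e^{-t^2L}=L^M[t^{2(M+1)}Le^{-t^2L}]$, each $\mij$ lies in $R(L^M)$.

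It remains to verify that, up to a uniform constant, each $\mij$ is a $(p(\cdot),M,\uc)_L$-molecule associated to $\Bij$. Fix $B:=\Bij$ of radius $r_B$, $k\in\{0,\dots,M\}$ and $\ell\in\zz_+$. Since $\supp\aij\st\widehat{B}$ forces $\aij(\cdot,t)$ to be supported in $B$ for each $t\in(0,r_B)$, the identity
\begin{equation*}
(r_B^{-2}L^{-1})^k\mij=c_M\int_0^{r_B}(t/r_B)^{2k}(t^2L)^{M+1-k}e^{-t^2L}\bigl(\aij(\cdot,t)\bigr)\,\frac{dt}{t}
\end{equation*}
reduces the task to an off-diagonal estimate for $\{(t^2L)^{M+1-k}e^{-t^2L}\}_{t>0}$, which still satisfies the Davies-Gaffney bound by Remark \ref{r-2-1}(ii). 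Pairing against test functions in $L^2(U_\ell(B))$, invoking the adjoint Davies-Gaffney estimate, and applying Cauchy-Schwarz in $t$ to absorb the tent-atom control $\|\aij\|_{T^2(\rnn)}\le|B|^{1/2}\|\chi_B\|_{\vp}^{-1}$ from Theorem \ref{t-WT-atom}, we expect to obtain
\begin{equation*}
\lf\|(r_B^{-2}L^{-1})^k\mij\r\|_{L^2(U_\ell(B))}\ls 2^{-\ell\uc}|2^\ell B|^{1/2}\|\chi_B\|_{\vp}^{-1}
\end{equation*}
for any prescribed $\uc\in(0,\fz)$. The defining axioms (i)--(iii) of Definition \ref{d-m-Hardy} are then inherited verbatim from Theorem \ref{t-WT-atom}(i)--(iii), and the norm bound $\|f\|_{\mol}\ls\|f\|_{W\!H_L^{p(\cdot)}(\rn)}$ follows.

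The most delicate step is establishing the above off-diagonal decay: one must balance the factor $(t/r_B)^{2k}\le 1$ against the Davies-Gaffney factor $\exp(-c(2^\ell r_B)^2/t^2)$ integrated over $t\in(0,r_B)$, while extracting the integrated $L^2$-mass of $\aij$ only through its global $T^2$-norm rather than slice-wise. A secondary obstacle is upgrading the almost-everywhere atomic decomposition of Theorem \ref{t-WT-atom} to convergence in $T^2(\rnn)$ on the subclass relevant here, where one leverages that $F\in T^2(\rnn)\cap W\!T^{p(\cdot)}(\rnn)$ and the $T^2$-size of each $\aij$ is bounded by a geometric tail of $F$.
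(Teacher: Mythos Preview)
Your proposal is correct and follows essentially the same route as the paper's proof: define $F=(t^2L)e^{-t^2L}f$, decompose $F$ atomically via Theorem \ref{t-WT-atom}, push the atoms through the operator $\Pi_{M,L}$, and check that the outputs are $(p(\cdot),M,\uc)_L$-molecules. The only difference is packaging: the paper outsources the three technical points you flag---the $T^2(\rnn)$-convergence upgrade, the boundedness $\Pi_{M,L}\colon T^2(\rnn)\to L^2(\rn)$, and the verification that $\Pi_{M,L}(\aij)$ is a molecule for any $\uc\in(0,\fz)$---to \cite[Proposition 3.1]{jy10}, \cite[Proposition 4.5(i)]{bckyy13a}, and \cite[Lemma 3.11]{yzz17} respectively, whereas you sketch the direct arguments.
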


\begin{proof}
Let $f\in W\!H_L^{p(\cdot)}(\rn)\cap L^2(\rn)$.
Then, by its definition and the boundedness of $t^2Le^{-t^2L}$ from $L^2(\rn)$
to $T^2(\rn)$ (see \cite[Theorem F]{adm96}), we find that
$$t^2Le^{-t^2L}(f)\in T^2(\rnn)\cap W\!T^{p(\cdot)}(\rnn).$$
Thus, by Theorem \ref{t-WT-atom},
we know that there exist sequences $\{\lij\}_{i\in\zz,j\in\nn}$ of numbers and
$\{\bij\}_{i\in\zz,j\in\nn}$ of $T^{p(\cdot)}(\rr_+^{n+1})$-atoms associated, respectively,
to balls $\{\Bij\}_{i\in\zz,j\in\nn}$ of $\rn$ such that
$$t^2Le^{-t^2L}(f)=\sum_{i\in\zz,j\in\nn}\lij\bij$$ almost everywhere on $\rnn$,
$\lij=2^i\|\chi_\Bij\|_\vp$
and
\begin{equation}\label{3-28z}
\sup_{i\in\zz}\ca(\{\lij\}_{i\in\zz,j\in\nn},\{\Bij\}_{i\in\zz,j\in\nn})
\ls \|t^2Le^{-t^2L}(f)\|_{W\!T^{p(\cdot)}(\rnn)}\sim \|f\|_{W\!H_L^{p(\cdot)}(\rn)}.
\end{equation}
Moreover, by \cite[Proposition 3.1]{jy10}, we conclude that the decomposition
\begin{equation}\label{3-28y}
t^2Le^{-t^2L}(f)=\sum_{i\in\zz,j\in\nn}\lij\bij
\end{equation}
also holds true in $T^2(\rnn)$. By the bounded holomorphic functional calculi for $L$,
we find that
\begin{align}\label{1-12y}
f=C_{(M)}\int_0^\fz(t^2L)^{M+1}e^{-t^2L}\lf(t^2Le^{-t^2L}(f)\r)\,\frac{dt}t
=:\pi_{M,L}(t^2Le^{-t^2L}(f))
\end{align}
in $L^2(\rn)$, where $C_{(M)}$ is a positive constant such that
$C_{(M)}\int_0^\fz t^{2(M+2)}e^{-2t^2}\,\frac{dt}t=1$ and the operator $\pi_{M,L}$
is defined by setting, for any $F\in T^2(\rnn)$ and $x\in\rn$,
$$\pi_{M,L}(F)(x):=\int_0^\fz (t^2L)^{M+1}e^{-t^2L}(F(\cdot,t))(x)\,\frac{dt}t.$$
Since $\pi_{M,L}$ is bounded from $T^2(\rnn)$ to $L^2(\rn)$
(see, for example, \cite[Proposition 4.5(i)]{bckyy13a}), it follows from
\eqref{3-28y} and \eqref{1-12y} that
\begin{equation}\label{1-16x}
f=C_{(M)}\pi_{\pi,L}\lf(\sum_{i\in\zz,j\in\nn}\lij\bij\r)=C_{(M)}\sum_{i\in\zz,j\in\nn}
\lij\pi_{M,L}(\bij)\quad{\rm in}\quad L^2(\rn).
\end{equation}
Notice that, for any $M\in\nn$, $\uc\in(0,\fz)$, $i\in\zz$ and $j\in\nn$, $\pi_{M,L}(\aij)$
is a $(p(\cdot),M,\uc)_L$-molecule associated to the ball $\Bij$, up to a harmless constant multiple
(see \cite[Lemma 3.11]{yzz17}). Therefore, by this, we further find
that \eqref{1-16x} is a weak molecular $(p(\cdot),M,\uc)_L$-representation of $f$,
which, combined with \eqref{3-28z}, implies that $f\in \mhp$ and
$\|f\|_{\mhp}\ls \|f\|_{W\!H_L^{p(\cdot)}(\rn)}$. This finishes the proof of Proposition
\ref{p-3-28}.
\end{proof}

Combining Propositions \ref{p-3-19x} and \ref{p-3-28}, we immediately conclude the following
molecular characterization of $\whv$, the details being omitted.

\begin{thm}\label{t-mol}
Let $L$ satisfy Assumptions \ref{as-a} and \ref{as-b} and $p(\cdot)\in C^{\log}(\rn)$ with $p_+\in(0,1]$.
Assume $M\in\nn\cap (\frac n2[\frac1{p_-}-\frac12],\fz)$ and $\uc\in(\frac n{p_-},\fz)$. Then
the spaces $\whv$ and $\mhp$ coincide with equivalent quasi-norms.
\end{thm}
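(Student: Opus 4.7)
The plan is to deduce Theorem~\ref{t-mol} directly from Propositions~\ref{p-3-19x} and~\ref{p-3-28}; no new analytic estimate is needed beyond these two results, only a bookkeeping verification that their quasi-norm bounds agree on a common dense subspace of the two completions. First, I would apply Proposition~\ref{p-3-19x} to obtain, for every $f\in\mathbb{W\!H}_{L,M}^{p(\cdot),\uc}(\rn)$, the inequality
\begin{equation*}
\|f\|_{W\!H_L^{p(\cdot)}(\rn)}\le C\|f\|_{\mhp}.
\end{equation*}
Since every weak molecular $(p(\cdot),M,\uc)_L$-representation converges in $L^2(\rn)$ by definition, such an $f$ automatically lies in $L^2(\rn)$, and hence, via the displayed inequality, in the pre-completion space $\mathbb{W\!H}_L^{p(\cdot)}(\rn)$ introduced in Definition~\ref{d-4-9}.

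Conversely, I would invoke Proposition~\ref{p-3-28}: every $f\in\mathbb{W\!H}_L^{p(\cdot)}(\rn)\subset W\!H_L^{p(\cdot)}(\rn)\cap L^2(\rn)$ admits a weak molecular representation with $\|f\|_{\mhp}\le C\|f\|_{W\!H_L^{p(\cdot)}(\rn)}$. Combining the two inclusions, the pre-completion spaces $\mathbb{W\!H}_L^{p(\cdot)}(\rn)$ and $\mol$ coincide as subsets of $L^2(\rn)$ and carry equivalent quasi-norms there.

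Finally, $W\!H_L^{p(\cdot)}(\rn)$ and $\mhp$ are, by construction, the completions of these pre-completion spaces in the corresponding quasi-norms. Consequently, the identity map on the common dense subspace extends by continuity to a quasi-norm-preserving isomorphism between $W\!H_L^{p(\cdot)}(\rn)$ and $\mhp$, which yields the asserted equivalence. I anticipate no real obstacle here; the only step requiring a moment's attention is that a Cauchy sequence in one quasi-norm is automatically Cauchy in the other, which is immediate from the two-sided bound established on the common dense subspace.
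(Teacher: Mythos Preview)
Your proposal is correct and follows precisely the route the paper takes: the paper states that Theorem~\ref{t-mol} is an immediate consequence of Propositions~\ref{p-3-19x} and~\ref{p-3-28}, with details omitted. Your write-up simply spells out the standard density/completion argument that the paper suppresses.
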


\begin{rem}
When $p(\cdot)\equiv {\rm constant}\in(0,1]$, Theorem \ref{t-mol} goes back to
\cite[Theorem 2.21]{cchy15}.
\end{rem}

\section{Atomic characterization of $W\!H_L^{p(\cdot)}(\rn)$\label{s-5}}

In this section, we mainly establish an atomic characterization of the variable weak Hardy space
$W\!H_L^{p(\cdot)}(\rn)$ under an additional condition on $L$, namely,
$L$ is non-negative self-adjoint. We begin with the following notion.

\begin{defn}\label{d-2-1}
Let $p(\cdot)\in C^{\log}(\rn)$,
$L$ be a non-negative self-adjoint operator on $L^2(\rn)$ satisfying Assumptions \ref{as-a} and \ref{as-b},
$M\in\nn$ and $B:=B(x_B,r_B)$ be a ball with $x_B\in \rn$ and $r_B\in(0,\fz)$.
A function $a\in L^2(\rn)$ is called a
$(p(\cdot),2,M)_L$-\emph{atom} associated with some ball $B:=B(x_B,r_B)$
if the following conditions are satisfied:
\begin{enumerate}
\item[(i)] there exists a function $b$ belongs to the domain of $L^M$ such that $a=L^Mb$;
\item[(ii)] for any $\ell\in\{0,\dots,M\}$, $\supp (L^\ell b)\subset B$;
\item[(iii)] for any $\ell\in\{0,\dots,M\}$,
$\|(r_B^2L)^\ell b\|_{L^2(\rn)}\le r_B^{2M}|B|^\frac12\|\chi_B\|_\vp$.
\end{enumerate}

Let $f\in L^2(\rn)$, $\{\lij\}_{i\in\zz,j\in\nn}\subset \cc$ and $\{\aij\}_{i\in\zz,j\in\nn}$
be a sequence of $(p(\cdot),2,M)_L$-atoms associated with balls $\{\Bij\}_{i\in\zz,j\in\nn}$.
Then
$$f=\sum_{i\in\zz}\sum_{j\in\nn}\lij\aij\quad {\rm in}\quad L^2(\rn)$$
is called a \emph{weak atomic $(p(\cdot),2,M)_L$-representation of $f$} if
\begin{enumerate}
\item[(i)] $\lij:=2^i\|\chi_\Bij\|_\vp$;
\item[(ii)] there exists a positive constant $C$, independent of $f$, such that
\begin{equation*}
\sup_{i\in\zz}\ca(\{\lij\}_{j\in\nn},\{\Bij\}_{j\in\nn})\le C;
\end{equation*}
\item[(iii)] there exists a positive constant $c\in(0,1]$ such that, for any $x\in\rn$ and $i\in\zz$,
$\sum_{j\in\nn}\chi_{c\chi_\Bij}(x)\le M_0$ with $M_0$ being a positive constant independent of $x$
and $i$.
\end{enumerate}

The \emph{weak atomic Hardy space} $W\!H_{L,{\rm at},M}^{p(\cdot)}(\rn)$ is defined to be the
completion of the space
\begin{center}
$\mathbb{W\!H}_{L,{\rm at},M}^{p(\cdot)}(\rn):=\lf\{f\in L^2(\rn):\ f\ {\rm has\ a\ weak\
atomic}\ (p(\cdot),2,M)_L-{\rm representation}\r\}$
\end{center}
with respect to the quasi-norm
$$\|f\|_{W\!H_{L,{\rm at},M}^{p(\cdot)}(\rn)}
:=\inf\lf[\sup_{i\in\zz}\ca(\{\lij\}_{j\in\nn},\{\Bij\}_{j\in\nn})\r],$$
where the infimum is taken over all weak atomic $(p(\cdot),2,M)_L$-representations of $f$ as above.
\end{defn}

The weak atomic characterization of the space $W\!H_L^{p(\cdot)}(\rn)$ is stated as follows.

\begin{thm}\label{t-1-29}
Let $p(\cdot)\in C^{\log}(\rn)$ with $p_+\in(0,1]$ and $L$ satisfy Assumptions \ref{as-a}
and \ref{as-b}. If $L$ is non-negative self-adjoint and
$M\in(\frac n2[\frac1{p_-}-\frac12],\fz)\cap \nn$. Then
$$W\!H_L^{p(\cdot)}(\rn)=W\!H_{L,{\rm at},M}^{p(\cdot)}(\rn)$$
with equivalent quasi-norms.
\end{thm}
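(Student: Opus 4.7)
I shall establish the two inclusions separately.

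For $\wha\st W\!H_L^{p(\cdot)}(\rn)$, I first observe that every $(p(\cdot),2,M)_L$-atom $a$ associated with a ball $B$ is automatically a $(p(\cdot),M,\uc)_L$-molecule associated with $B$ for every $\uc\in(0,\fz)$. Indeed, writing $a=L^Mb$ with $\supp(L^\ell b)\st B$ for each $\ell\in\{0,\dots,M\}$, the function $(r_B^{-2}L^{-1})^k(a)=r_B^{-2k}L^{M-k}b$ is again supported in $B$; hence its $L^2$-norm on each annulus $U_j(B)$ with $j\in\nn$ vanishes, while on $U_0(B)=B$ it is controlled by $|B|^{1/2}\|\chi_B\|_{\vp}^{-1}$ by Definition \ref{d-2-1}(iii). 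Consequently, every weak atomic $(p(\cdot),2,M)_L$-representation of $f$ is, with the same coefficients and balls, a weak molecular $(p(\cdot),M,\uc)_L$-representation for any fixed $\uc\in(n/p_-,\fz)$, so Theorem \ref{t-mol} yields
$\|f\|_{W\!H_L^{p(\cdot)}(\rn)}\ls\|f\|_{\mhp}\ls\|f\|_{\wha}$.

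For the converse inclusion $W\!H_L^{p(\cdot)}(\rn)\st\wha$, I follow the scheme of Proposition \ref{p-3-28}, the crucial new ingredient being a Calder\'on reproducing formula whose reconstruction operator produces compactly supported atoms rather than only molecules. Given $f\in W\!H_L^{p(\cdot)}(\rn)\cap L^2(\rn)$, I apply Theorem \ref{t-WT-atom} to decompose
$$F(x,t):=(t^2L)e^{-t^2L}(f)(x)\in T^2(\rnn)\cap W\!T^{p(\cdot)}(\rnn)$$
as $F=\sum_{i\in\zz,j\in\nn}\lij\bij$, where each $\bij$ is a $T^{p(\cdot)}(\rnn)$-atom supported in $\wh{\Bij}$, the coefficients $\lij:=2^i\|\chi_{\Bij}\|_{\vp}$ satisfy $\sup_{i\in\zz}\ca(\{\lij\}_{j\in\nn},\{\Bij\}_{j\in\nn})\ls\|f\|_{W\!H_L^{p(\cdot)}(\rn)}$, and $\{\Bij\}_{j\in\nn}$ enjoys the finite overlap property of Theorem \ref{t-WT-atom}(iii). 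Then, in place of $\pi_{M,L}$, I employ the operator
\begin{equation*}
\Pi_{\Phi,L}(G)(x):=\int_0^\fz\Phi(t^2L)(G(\cdot,t))(x)\,\frac{dt}{t},
\end{equation*}
where $\Phi(z):=z^{M+1}\phi(z)$ is a bounded holomorphic function on some sector $S_\mu^0$ chosen so that a Calder\'on-type identity $f=c_\Phi\Pi_{\Phi,L}(F)$ holds in $L^2(\rn)$ via the bounded holomorphic functional calculus of $L$, and so that $\phi(t^2L)$ has finite propagation speed of order $t$. As is standard when $L$ is non-negative self-adjoint, such a $\phi$ is built from the Fourier cosine transform of an even compactly supported test function, using the spectral theorem to define $\cos(t\sqrt{L})$. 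Formalised as Lemma \ref{l-3-18a}, this construction guarantees that $\Pi_{\Phi,L}(\bij)$ equals a fixed multiple of a $(p(\cdot),2,M)_L$-atom associated with a bounded dilate $c\Bij$ of $\Bij$, where $c\in(0,\fz)$ is independent of $i$ and $j$.

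Combining the two ingredients, I obtain
$$f=c_\Phi\sum_{i\in\zz,j\in\nn}\lij\Pi_{\Phi,L}(\bij)=\sum_{i\in\zz,j\in\nn}\lij\aij\quad\text{in}\ L^2(\rn),$$
where each $\aij$ is, after absorbing the ratio $\|\chi_{c\Bij}\|_{\vp}/\|\chi_{\Bij}\|_{\vp}$ (bounded above and below by Lemma \ref{l-2-2}), a $(p(\cdot),2,M)_L$-atom associated with $c\Bij$. Re-labelling the underlying balls as $\Bij':=c\Bij$ and shrinking the overlap constant from Theorem \ref{t-WT-atom}(iii) by a factor of $c$ if necessary (which costs only a factor depending on $c$ thanks to Remark \ref{r-1-30}(i)), one recovers the overlap condition of Definition \ref{d-2-1} and thereby a genuine weak atomic $(p(\cdot),2,M)_L$-representation of $f$ with $\|f\|_{\wha}\ls\|f\|_{W\!H_L^{p(\cdot)}(\rn)}$. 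The principal obstacle is Lemma \ref{l-3-18a} itself: establishing that $\Pi_{\Phi,L}(\bij)$ is genuinely compactly supported in a bounded dilate of $\Bij$ (rather than merely decaying off $\Bij$) relies essentially on the finite propagation speed of $\cos(t\sqrt{L})$, and it is precisely at this point that the hypothesis of non-negative self-adjointness of $L$ cannot be avoided. Once that lemma is in hand, the remainder of the argument is a routine adaptation of the proof of Proposition \ref{p-3-28}.
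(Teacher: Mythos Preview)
Your proposal is correct, and for the inclusion $W\!H_L^{p(\cdot)}(\rn)\st\wha$ you follow essentially the same scheme as the paper: decompose $t^2Le^{-t^2L}(f)$ in the weak tent space via Theorem \ref{t-WT-atom}, then push the $T^{p(\cdot)}$-atoms through $\Pi_{\Phi,L}$ and invoke Lemma \ref{l-3-18a}, exactly as the paper does (its dilate is $2\Bij$; your generic $c\Bij$ is fine after the Lemma \ref{l-2-2}/Remark \ref{r-1-30} adjustment you indicate).

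For the inclusion $\wha\st W\!H_L^{p(\cdot)}(\rn)$, however, you take a genuinely different and more economical route. You simply observe that a $(p(\cdot),2,M)_L$-atom is a $(p(\cdot),M,\uc)_L$-molecule for every $\uc\in(0,\fz)$ (since $(r_B^{-2}L^{-1})^k a$ is supported in $B$), and then invoke the already-proved Theorem \ref{t-mol} (really Proposition \ref{p-3-19x}). The paper does \emph{not} do this: it instead re-runs the entire direct estimate of Proposition \ref{p-3-19x} from scratch for atoms, proving along the way the standalone claim
\[
\|S_L(a)\|_{L^2(U_k(B))}\ls 2^{-2k\beta}|B|^{1/2}\|\chi_B\|_{\vp}^{-1},\qquad \beta\in(0,M),
\]
and then repeating the full ${\rm H}_{1,1}/{\rm H}_{1,2}$ splitting with Lemma \ref{l-1-30}. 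Your approach avoids this duplication entirely and is logically tighter; the paper's approach buys an explicit off-ball decay estimate for $S_L$ acting on atoms, which could be of independent use but is not needed for the theorem itself.
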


If $L$ is a non-negative self-adjoint operator on $L^2(\rn)$, then, for any bounded Borel measurable
function $F:\ [0,\fz)\to\cc$, the operator $F(L)$, defined by the formula
\begin{equation*}
F(L):=\int_0^\fz F(\lz)\,dE_{L}(\lz),
\end{equation*}
where $E_L(\lz)$ denotes the spectral decomposition associated with $L$, is bounded on $L^2(\rn)$.
Let $M\in(0,\fz)$, $\phi_0$ be a given even Schwartz function on $\rr$ and $\supp \phi_0\subset(-1,1)$.
Let $\Phi$ be the Fourier transform of
$\phi_0$, namely, for any $\xi\in\rr$, $\Phi(\xi):=\int_\rr\phi_0(x)e^{-ix\cdot\xi}\,dx$.
Then define the operator $\Pi_{\Phi,L}$ by setting,
for any $F\in T^2(\rnn)$ and $x\in\rn$,
\begin{equation}\label{4-26x}
\Pi_{\Phi,L}(F)(x):=C_{(\Phi,M)}\int_0^\fz(t^2L)^{M+1}\Phi(t\sqrt L)(F(\cdot,t))(x)\,\frac{dt}t,
\end{equation}
where $C_{(\Phi,M)}$ is the positive constant such that
$$C_{(\Phi,M)}\int_0^\fz t^{2(M+1)}\Phi(t)t^2e^{-t^2}\,\frac{dt}t=1.$$
Moreover, $\Pi_{\Phi,L}$ is bounded from $T^2(\rnn)$ to $L^2(\rn)$ (see, for example,
\cite[Proposition 4.2(ii)]{jy11}).

By an argument similar to that used in the proof of \cite[Lemma 4.11]{hlmmy}
(see also \cite[Proposition 2.5(i)]{zy15}),
we obtain the following conclusion, the details being omitted.

\begin{lem}\label{l-3-18a}
Let $M\in\nn$, $L$ and $p(\cdot)$ be as in Theorem \ref{t-1-29}. If
$a$ is a $T^{p(\cdot)}(\rr_+^{n+1})$-atom associated with the ball $B$,
then there exists a positive constant $C$, independent of $a$, such that
$C\Pi_{\Phi,L}(a)$ is a $(p(\cdot),2,M)_L$-atom associated with the ball 2B.
\end{lem}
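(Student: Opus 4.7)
The plan is to verify the three defining conditions of Definition \ref{d-2-1} for the ball $2B$ by an explicit candidate for $b$. For the $T^{p(\cdot)}(\rr_+^{n+1})$-atom $a$ associated with $B$, whose $T^2(\rnn)$-norm satisfies $\|a\|_{T^2(\rnn)}\le|B|^{1/2}\|\chi_B\|_\vp^{-1}$, I set
\begin{align*}
b(x):=C_{(\Phi,M)}\int_0^\fz t^{2(M+1)}L\,\Phi(t\sqrt{L})(a(\cdot,t))(x)\,\frac{dt}{t},
\end{align*}
so that $\Pi_{\Phi,L}(a)=L^Mb$ and, more generally,
\begin{align*}
L^\ell b(x)=C_{(\Phi,M)}\int_0^\fz t^{2(M-\ell)}\Psi_\ell(t\sqrt{L})(a(\cdot,t))(x)\,\frac{dt}{t}
\end{align*}
for every $\ell\in\{0,\ldots,M\}$, where $\Psi_\ell(\xi):=\xi^{2(\ell+1)}\Phi(\xi)$. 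Absorbing $C_{(\Phi,M)}$ into the multiplicative constant $C$ of the lemma gives condition (i).

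For condition (ii), I would invoke the finite propagation speed property of the wave operator associated with $L$, which holds since $L$ is non-negative self-adjoint and satisfies the Davies-Gaffney estimate. Together with $\supp\phi_0\subset(-1,1)$, this forces the Schwartz kernel of $\Phi(t\sqrt{L})$ to vanish outside $\{(x,y):\ |x-y|\le t\}$. Given any $x\notin 2B$ and any $(y,t)\in\wh{B}$, the inequalities $|x-x_B|\ge 2r_B$, $|y-x_B|<r_B$, and $t\le r_B-|y-x_B|$ combine to yield $|x-y|>r_B\ge t$, so $\Phi(t\sqrt{L})(a(\cdot,t))(x)=0$. Therefore $L^\ell b$ is supported in $2B$ for every $\ell\in\{0,\ldots,M\}$.

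For condition (iii), I plan to invoke the quadratic estimate for the bounded holomorphic functional calculus of $L$, which implies that the map $F\mapsto\int_0^\fz\Psi_\ell(t\sqrt{L})(F(\cdot,t))\,\frac{dt}{t}$ is bounded from $T^2(\rnn)$ to $L^2(\rn)$; the requisite vanishing at $0$ and decay at $\fz$ for $\Psi_\ell$ are supplied by $\Psi_\ell(\xi)=\xi^{2(\ell+1)}\Phi(\xi)$ with $\Phi$ Schwartz. Setting $F(y,t):=t^{2(M-\ell)}a(y,t)$ and using $t\le r_B$ on $\supp a\subset\wh{B}$ gives $A(F)(x)\le r_B^{2(M-\ell)}A(a)(x)$ for every $x\in\rn$, whence
\begin{align*}
\|L^\ell b\|_{L^2(\rn)}\ls\|F\|_{T^2(\rnn)}\le r_B^{2(M-\ell)}\|a\|_{T^2(\rnn)}\le r_B^{2(M-\ell)}|B|^{1/2}\|\chi_B\|_\vp^{-1}.
\end{align*}
Multiplying by $r_{2B}^{2\ell}\sim r_B^{2\ell}$ and using $|2B|\sim|B|$ together with $\|\chi_{2B}\|_\vp\sim\|\chi_B\|_\vp$ from Lemma \ref{l-2-2} then yield $\|(r_{2B}^2L)^\ell b\|_{L^2(\rn)}\ls r_{2B}^{2M}|2B|^{1/2}\|\chi_{2B}\|_\vp^{-1}$, which is condition (iii) after the constant is absorbed into $C$.

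The main obstacle is the clean application of the quadratic estimate to the auxiliary symbol $\Psi_\ell$, i.e., recasting the map $F\mapsto\int_0^\fz\Psi_\ell(t\sqrt{L})(F(\cdot,t))\,\frac{dt}{t}$ so that the standard quadratic estimate for bounded holomorphic functional calculus applies literally, as done in \cite[Lemma 4.11]{hlmmy} and \cite[Proposition 2.5(i)]{zy15}. Once this boundedness is granted, the remainder of the argument is a bookkeeping exercise that distributes the factor $t^{2(M-\ell)}$ between the tent-space norm of $a$ and the radius of the enclosing ball.
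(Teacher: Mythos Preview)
Your proposal is correct and follows exactly the argument the paper defers to \cite[Lemma 4.11]{hlmmy} and \cite[Proposition 2.5(i)]{zy15}; the paper itself omits the details. One small imprecision in step (ii): you verify the finite-propagation property only for $\Phi(t\sqrt{L})$, but the integrand in your formula for $L^\ell b$ is $\Psi_\ell(t\sqrt{L})(a(\cdot,t))$, and applying powers of $L$ to a function supported in $2B$ need not preserve the support. The fix is immediate---since $\Psi_\ell(\xi)=\xi^{2(\ell+1)}\Phi(\xi)$ has inverse Fourier transform proportional to $\phi_0^{(2\ell+2)}$, which is still supported in $(-1,1)$, the finite-propagation-speed argument applies directly to each $\Psi_\ell(t\sqrt{L})$.
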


We now turn to the proof of Theorem \ref{t-1-29}.

\begin{proof}[Proof of Theorem \ref{t-1-29}]
To prove Theorem \ref{t-1-29}, it suffices to show that
$$W\!H_L^{p(\cdot)}(\rn)\cap L^2(\rn)=\mathbb{W\!H}_{L,{\rm at},M}^{p(\cdot)}(\rn)$$
with equivalent quasi-norms.

We first prove the inclusion
$$\mathbb{W\!H}_{L,{\rm at},M}^{p(\cdot)}(\rn)\subset W\!H_L^{p(\cdot)}(\rn)\cap L^2(\rn).$$
Let $f\in \mathbb{W\!H}_{L,{\rm at},M}^{p(\cdot)}(\rn)$. Then it follows from its
definition that there exist sequences
$\{\lij\}_{i\in\zz,j\in\nn}$ of numbers and $\{\aij\}_{i\in\zz,j\in\nn}$ of
$(p(\cdot),2,M)_L$-atoms associated, respectively, to balls $\{\Bij\}_{i\in\zz,j\in\nn}$ such that
$\lij=2^i\|\chi_\Bij\|_\vp$,
\begin{equation*}
f=\sum_{i\in\zz,j\in\nn}\lij\aij\quad{\rm in}\quad L^2(\rn)
\end{equation*}
and there exists a constant $c\in(0,1]$ satisfying $\sum_{j\in\nn}\chi_{c\Bij}\ls 1$ with the
implicit positive constant independent of $i\in\zz$.
Moreover, we have
\begin{equation*}
\sup_{i\in\zz}\ca(\{\lij\}_{i\in\zz,j\in\nn},\{\Bij\}_{i\in\zz,j\in\nn})
\ls \|f\|_{W\!H_{L,{\rm at},M}^{p(\cdot)}(\rn)}.
\end{equation*}

For any given $\az\in(0,\fz)$, let $i_0\in\zz$ be such that $2^{i_0}\le \az<2^{i_0+1}$.
Then, by an argument similar to that used in the proof of \eqref{3-29x},
we have
\begin{align*}
&\lf\|\{x\in\rn:\ S_L(f)(x)>\az\}\r\|_{\vp}\\
&\hs\ls\|\chi_{\{x\in\rn:\ {\rm H}_1(x)>\az/2\}}\|_{\vp}+
\|\chi_{\{x\in\rn:\ {\rm H}_2(x)>\az/2\}}\|_{\vp}
=:{\rm H}_{1,1}+{\rm H}_{1,2}.
\end{align*}

We claim that, for any $(p(\cdot),2,M)_L$-atom $a$
associated to some ball $B:=B(x_B,r_B)$, with $x_B\in\rn$ and $r_B\in(0,\fz)$,
it holds true that, for any $\beta\in(0,M)$ and $k\in\zz_+$,
\begin{equation}\label{1-29v}
\|S_L(a)\|_{L^2(U_k(B))}
\ls 2^{-2k\beta}|B|^\frac12\|\chi_B\|_{\vp}^{-1},
\end{equation}
where $U_k(B):=[2^kB]\backslash [2^{k-1}B]$.

To prove this claim, by the boundedness of the operator $S_L$ on $L^2(\rn)$ and the definition of
$(p(\cdot),2,M)_L$-atom,
it suffices to show \eqref{1-29v} for $k\ge 5$. To this end, by the Minkowski inequality,
we first write
\begin{align}\label{1-29y}
\|S_L(a)\|_{L^2(U_k(B))}
&\ls \lf\{\int_{U_k(B)}\int_0^{r_B}\int_{|y-x|<t}
|t^2Le^{-t^2L}(a)(y,t)|^2\,\frac{dy\,dt}{t^{n+1}}\,dx\r\}^\frac12\\
&\quad\hs+\lf\{\int_{U_k(B)}\int_{r_B}^{\frac{\dist(x,B)}{4}}\int_{|y-x|<t}
\cdots\frac{dy\,dt}{t^{n+1}}\,dx\r\}^\frac12\noz\\
&\quad\hs+\lf\{\int_{U_k(B)}\int_{\frac{\dist(x,B)}{4}}^\fz\int_{|y-x|<t}
\cdots\frac{dy\,dt}{t^{n+1}}\,dx\r\}^\frac12\noz\\
&=:{\rm J}_1+{\rm J}_2+{\rm J}_3.\noz
\end{align}

For J$_1$, let $E_1:=\{x\in\rn:\ \dist(x,U_k(B))<r_B\}$. Then $\dist(E_1,B)>2^{k-3}r_B$.
From this, combined with the fact that $\dist(U_k(B),B)>2^{k-2}r_B$ for $k\ge 5$ and
Assumption \ref{as-a},
we deduce that
\begin{align}\label{1-29z}
{\rm J}_1
&\ls \lf\{\int_0^{r_B}\int_{E_1}\int_{|y-x|<t}|t^2Le^{-t^2L}(a)(y,t)|^2
\,dx\frac{dy\,dt}{t^{n+1}}\r\}^\frac12\\
&\sim \lf\{\int_0^{r_B}\lf\|t^2Le^{-t^2L}(a)\r\|_{L^2(E_1)}^2
\,\frac{dt}{t}\r\}^\frac12
\ls  \lf\{\int_0^{r_B} e^{-c\frac{[\dist(E_1,B)]^4}{t^4}}\,\frac{dt}{t}\r\}^\frac12
\|a\|_{L^2(B)}\noz\\
&\ls \lf\{\int_0^{r_B} \frac{t^{4\alpha_1}}{[\dist(E_1,B)]^{4\az_1}}
\,\frac{dt}{t}\r\}^\frac12\|a\|_{L^2(B)}
\ls 2^{-2k\az_1}|B|^{\frac12}\|\chi_B\|_{\vp}^{-1},\noz
\end{align}
where $\az_1$ is chosen such that $\az\in(0,M)$.

For J$_2$, let $E_2:=\{x\in\rn:\ \dist(x,U_k(B))<\frac{\dist(x,B)}4\}$. Then it is easy to see that
$\dist(E_2,B)>2^{k-3}r_B$, which, together with Remark \ref{r-2-1}(ii), further implies that
\begin{align}\label{1-29u}
{\rm J}_2
&\ls \lf\{\int_{r_B}^\fz\int_{E_2}\lf|t^2Le^{-t^2L}(a)(y,t)\r|^2\,\frac{dy\,dt}t\r\}^\frac12\\
&\ls \lf\{\int_{r_B}^\fz \lf\|(t^2L)^{M+1}e^{-t^2L}(L^{-M}a)\r\|_{L^2(E_2)}^2
\,\frac{dt}{t^{4M+1}}\r\}^\frac12\noz\\
&\ls \lf\{\int_{r_B}^\fz \frac{t^{4\az_2}}{[\dist(E_2,B)]^{4\az_2}}\,\frac{dt}{t^{4M+1}}\r\}^\frac12
\|L^{-M}a\|_{L^2(B)}\ls 2^{-2k\az_2}|B|^{\frac12}\|\chi_B\|_{\vp}^{-1},\noz
\end{align}
where $\az_2$ is chosen such that $\az_2\in(0,M)$.

For J$_3$, observe that, for any $x\in U_k(B)$, $\dist(x,B)\ge 2^{k-2}r_B$. It follows, from
Remark \ref{r-2-1}(ii) again, that
\begin{align}\label{1-29w}
{\rm J}_3
&\ls \lf\{\int_{2^{k-2}r_B}^\fz \int_\rn \int_{|y-x|<t} \lf|t^2Le^{-t^2L}(a)(y,t)
\r|^2\,dx\frac{dy\,dt}{t^{n+1}}\r\}^\frac12\\
&\ls \lf\{\int_{2^{k-2}r_B}^\fz \lf\|(t^2L)^{M+1}e^{-t^2L}(L^{-M}a)\r\|_{L^2(\rn)}^2
\,\frac{dt}{t^{4M+1}}\r\}^\frac12\noz\\
&\ls \lf\{\int_{2^{k-2}r_B}^\fz\,\frac{dt}{t^{4M+1}}\r\}^\frac12
\lf\|(L^{-M}a)\r\|_{L^2(B)}\ls 2^{-2kM}|B|^{\frac12}\|\chi_B\|_{\vp}^{-1}.\noz
\end{align}

Combining \eqref{1-29y}, \eqref{1-29z}, \eqref{1-29u} and \eqref{1-29w}, we conclude that
\eqref{1-29v} holds true.

Now we turn to estimate H$_{1,1}$ and H$_{1,2}$.
Let $b\in(0,p_-)$, $q\in(1,\min\{2,\frac1b\})$ and
$a\in(0,1-\frac1q)$. Then, by an argument similar to that used in the proof of \eqref{3-29w},
we obtain
\begin{align}\label{1-30x}
{\rm H}_{1,1}
&\ls 2^{-i_0q(1-a)} \lf[\sum_{i=-\fz}^{i_0-1}\sum_{k\in\zz_+}2^{-iq(a-1)b}
\vpz{\|\lf\{\sum_{j\in\nn}\lf[\|\chi_{\Bij}\|_{\vp}
S_L(\aij)\chi_{U_k(\Bij)}\r]^{qb}\r\}^\frac1b\|_{\vp}^b]^\frac1b}
\vphantom{\lf.
\lf\|\lf\{\sum_{j\in\nn}\lf[\|\chi_{\Bij}\|_{\vp}
S_L(\aij)\chi_{U_k(\Bij)}\r]^{qb}\r\}^\frac1b\r\|_{\vp}^b\r]^\frac1b}\r.\\
&\quad\hs\times\lf.
\lf\|\lf\{\sum_{j\in\nn}\lf[\|\chi_{\Bij}\|_{\vp}
S_L(\aij)\chi_{U_k(\Bij)}\r]^{qb}\r\}^\frac1b\r\|_{\vp}^b\r]^\frac1b.\noz
\end{align}
Moreover, by the above claim \eqref{1-29v}, we find that,
for any $i\in\zz$, $j\in\nn$ and $k\in\zz_+$,
\begin{align}\label{3-4x}
\lf\|[\|\chi_{\Bij}\|_{\vp}
S_L(\aij)\chi_{U_k(\Bij)}]^{q}\r\|_{L^{\frac2q}(\rn)}
\ls 2^{-kq(2\beta+\frac n2)}|2^k\Bij|^{\frac q2}.
\end{align}
We choose $\beta\in(\frac n2(\frac1{p_-}-\frac12),M)$ and $r\in (\frac{p_-}{bq},1)$.
Then $\beta\in (\frac n2(\frac 1{rqb}-\frac12),M)$ and hence
$$q\lf(2\beta+\frac n2\r)>\frac nr.$$
Thus, by this, \eqref{3-4x}, \eqref{1-30x}, Lemma \ref{l-1-30} and Remarks \ref{r-1-30}(i) and
\ref{r-1-30x},
we conclude that
\begin{align*}
{\rm H}_{1,1}
&\ls 2^{-i_0q(1-a)}\lf[\sum_{i=-\fz}^{i_0-1}\sum_{k\in\zz_+}2^{-iq(a-1)b}2^{-kq(2\beta+\frac n2)b}
\lf\|\lf\{\sum_{j\in\nn}\chi_{2^k\Bij}
\r\}^\frac1b\r\|_{\vp}^b\r]^\frac1b\\
&\ls2^{-i_0q(1-a)} \lf[\sum_{i=-\fz}^{i_0-1}\sum_{k\in\zz_+}2^{-iq(a-1)b}2^{-kq(2\beta+\frac n2)b}
2^{kn/r}\lf\|\lf\{\sum_{j\in\nn}\chi_{\Bij}
\r\}^\frac1b\r\|_{\vp}^b\r]^\frac1b\\
&\ls 2^{-i_0q(1-a)} \lf\{\sum_{i=-\fz}^{i_0-1}2^{-iq(a-1)b}2^{-ib}
\lf[\sup_{i\in\zz}2^i\lf\|\sum_{j\in\nn}\chi_{\Bij}\r\|_{\vp}\r]^b\r\}^\frac1b
\ls \az^{-1}\|f\|_{W\!H_{L,{\rm at},M}^{p(\cdot)}(\rn)},
\end{align*}
where, in the last inequality, we used the fact that $a\in(0,1-\frac1q)$.

For H$_{1,2}$, since $\beta\in(\frac n2[\frac1{p_-}-\frac12],M)$, it follows that
there exist $r_2,\ s\in(0,1)$ such that $\beta\in(\frac n2[\frac1{r_2sp_-}-\frac12],M)$.
Then, by \eqref{3-4y}, we find that
\begin{align*}
{\rm H}_{1,2}
&\ls \alpha^{r_2}\lf\|\sum_{i=i_0}^\fz\sum_{j\in\nn}
\lf[2^i\|\chi_{\Bij}\|_\vp S_L(\aij)\r]^{r_2}\r\|_\vp\\
&\ls \alpha^{r_2}\lf\|\sum_{i=i_0}^\fz\sum_{j\in\nn}\sum_{k\in\zz_+}
\lf[2^i\|\chi_{\Bij}\|_\vp S_L(\aij)\r]^{r_2p_-}\chi_{U_k(\Bij)}
\r\|_{L^{\frac{p(\cdot)}{p_-}}(\rn)}^{\frac1{p_-}}\\
&\ls \az^{r_2}\lf\{\sum_{i=i_0}^\fz\sum_{k\in\zz_+}2^{ir_2p_-}
\lf\|\lf(\sum_{j\in\nn}\lf[\|\chi_{\Bij}\|_{\vp}
S_L(\aij)\chi_{U_k(\Bij)}\r]^{r_2p_-}
\r)^{\frac1{p_-}}
\r\|_{L^{p(\cdot)}(\rn)}^{p_-}\r\}^{\frac1{p_-}}.
\end{align*}
From this, together with \eqref{1-29v}, Lemma \ref{l-1-30}, Remark \ref{r-1-30x},
\eqref{3-4x} with $q$ therein replaced by $r_2$ and the fact that
$(2\beta+\frac n2)r_2p_->\frac ns$, we deduce that
\begin{align*}
{\rm H}_{1,2}
&\ls \az^{r_2}\lf\{\sum_{i=i_0}^\fz\sum_{k\in\zz_+}2^{ir_2p_-}2^{-k(2\beta+\frac n2) r_2p_-}
\lf\|\lf[\sum_{j\in\nn}\chi_{2^k\Bij}\r]^{\frac1{p_-}}
\r\|_{\vp}^{p_-}\r\}^{\frac1{p_-}}\\
&\ls \az^{r_2}\lf\{\sum_{i=i_0}^\fz\sum_{k\in\zz_+}2^{ir_2p_-}2^{-k(2\beta+\frac n2) r_2p_-}2^{kn/s}
\lf\|\lf(\sum_{j\in\nn}\chi_{\Bij}\r)^{\frac1{p_-}}
\r\|_{L^{p(\cdot)}(\rn)}^{p_-}\r\}^{\frac1{p_-}}\\
&\ls \az^{r_2}\lf\{\sum_{i=i_0}^\fz2^{ir_2p_-}2^{-ip_-}
\lf[\sup_{i\in\zz}2^i\lf\|\sum_{j\in\nn}\chi_{\Bij}
\r\|_{L^{p(\cdot)}(\rn)}\r]^{p_-}\r\}^{\frac1{p_-}}
\ls \az^{-1}\|f\|_{W\!H_{L,{\rm at},M}^{p(\cdot)}(\rn)},
\end{align*}
where we used the fact that $r_2\in(0,1)$ in the last inequality.

Combining the estimates of H$_{1,1}$ and H$_{1,2}$, we finally conclude that
\begin{equation}\label{3-18w}
\|\{x\in\rn:\ S_L(f)(x)>\az\}\|_{\vp}\\
\ls {\rm H}_{1,1}+{\rm H}_{1,2}\ls \az^{-1}\|f\|_{W\!H_{L,{\rm at},M}^{p(\cdot)}(\rn)},
\end{equation}
which further implies that $\|S_L(f)\|_{W\!L^{p(\cdot)}(\rn)}
\ls \|f\|_{W\!H_{L,{\rm at},M}^{p(\cdot)}(\rn)}$, namely,
$f\in W\!H_L^{p(\cdot)}(\rn)$.

Conversely, we prove that
$W\!H_L^{p(\cdot)}(\rn)\cap L^2(\rn)\subset \mathbb{W\!H}_{L,{\rm at},M}^{p(\cdot)}(\rn)$.
Let $f\in W\!H_L^{p(\cdot)}(\rn)\cap L^2(\rr_+^{n+1})$. Then by its definition and
the boundedness of $t^2Le^{-t^2L}$ from $L^2(\rn)$
to $T^2(\rn)$, we know that
$t^2Le^{-t^2L}(f)\in W\!T^{p(\cdot)}(\rnn)\cap T^{2}(\rnn)$.
From this and Theorem \ref{t-WT-atom}, we deduce
that there exist sequences $\{\lij\}_{i\in\zz,j\in\nn}\subset\cc$ and $\{\bij\}_{i\in\zz,j\in\nn}$
of $T^{p(\cdot)}(\rnn)$-atoms associated, respectively, to balls $\{\Bij\}_{i\in\zz,j\in\nn}$ such that
$$t^2Le^{-t^2L}(f)=\sum_{i\in\zz,j\in\nn}\lij\bij$$
almost everywhere on $\rnn$,
$\lij=2^i\|\chi_\Bij\|_\vp$ and
\begin{equation}\label{4-22x}
\sup_{i\in\zz}\ca(\{\lij\}_{j\in\nn},\{\Bij\}_{j\in\nn})
\ls \|t^2Le^{-t^2L}(f)\|_{W\!T^{p(\cdot)}(\rnn)}\sim \|f\|_{W\!H_L^{p(\cdot)}(\rn)}.
\end{equation}
Moreover, by the bounded holomorphic functional calculi for $L$, we know that
\begin{equation*}
f=C_{(\Phi,M)}\int_0^\fz(t^2L)^{M+1}\Phi(t\sqrt L)\lf(t^2Le^{-t^2L}(f)\r)\,\frac{dt}t
=\Pi_{\Phi,L}(t^2Le^{-t^2L}(f))
\end{equation*}
in $L^2(\rn)$, where $C_{(\Phi,M)}$ is the positive constant same as
in \eqref{4-26x}. Therefore, we have
\begin{equation}\label{4-22y}
f=\Pi_{\Phi,L}(t^2Le^{-t^2L}(f))=\sum_{i\in\zz,j\in\nn}\lij\Pi_{\Phi,L}(\bij)
=:\sum_{i\in\zz,j\in\nn}\lij\aij,
\end{equation}
where the above equalities hold true in $L^2(\rn)$. By Lemma \ref{l-3-18a}, we know that
$\aij$ is a $(p(\cdot),2,M)_L$-atom associated with
$2\Bij$ up to a positive harmless constant multiple. Thus,
\eqref{4-22y} is a weak atomic $(p(\cdot),2,M)_L$-representation of $f$,
which, combined with \eqref{4-22x}, implies that
$f$ belongs to $\mathbb{W\!H}_{L,{\rm at},M}^{p(\cdot)}(\rn)$.
Moreover, by Remarks \ref{r-1-30}(i) and \ref{r-1-30x}, we conclude that
\begin{align*}
\|f\|_{W\!H_{L,{\rm at},M}^{p(\cdot)}(\rn)}
&\ls \sup_{i\in\zz}\lf\|\lf\{\sum_{j\in\nn}
\lf[\frac{\lij\chi_{2\Bij}}{\|\chi_{2\Bij}\|_\vp}\r]^{p_-}\r\}^{\frac1{p_-}}\r\|_\vp\\
&\ls \sup_{i\in\zz}\lf\|\lf\{\sum_{j\in\nn}
\chi_{2\Bij}\r\}^{\frac1{p_-}}\r\|_\vp\ls
 \sup_{i\in\zz}\lf\|\lf\{\sum_{j\in\nn}
\chi_{\Bij}\r\}^{\frac1{p_-}}\r\|_\vp\\
&\sim \sup_{i\in\zz}\ca(\{\lij\}_{j\in\nn},\{\Bij\}_{j\in\nn})
\ls \|f\|_{W\!H_L^{p(\cdot)}(\rn)}.
\end{align*}
This finishes the proof of Theorem \ref{t-1-29}.
\end{proof}

\section{Non-tangential maximal function characterizations\label{s-6}}

The main purpose of this section is to establish non-tangential maximal function characterizations
of the weak Hardy space $W\!H_L^{p(\cdot)}(\rn)$ with $L$ as in Remark \ref{r-3-30}(ii),
namely, $L$ is a non-negative self-adjoint operator on $L^2(\rn)$ whose heat kernels
satisfying the Gaussian upper bound estimates.
To this end, we begin with the following notion.

\begin{defn}\label{d-3-16}
\begin{enumerate}
\item[(i)] Let $\phi\in\cs(\rr)$ be an even function with $\phi(0)=1$ and
$L$ be as in Remark \ref{r-3-30}(ii). For any $a\in(0,\fz)$ and $f\in L^2(\rn)$, the
\emph{non-tangential maximal function $\phi_{L,\bigtriangledown,a}(f)$} of $f$
is defined by setting, for any $x\in\rn$,
$$\phi_{L,\bigtriangledown,a}(f)(x):=\sup_{t\in(0,\fz),|y-x|<at}
|\phi(t\sqrt L)(f)(y)|.$$
A function $f\in L^2(\rn)$ is said to be in the set
$\mathbb{W\!H}_{L,\max}^{p(\cdot),\phi,a}(\rn)$
if $\phi_{L,\bigtriangledown,a}^\ast(f)\in L^{p(\cdot)}(\rn)$; moreover, define
$\|f\|_{W\!H_{L,\max}^{p(\cdot),\phi,a}(\rn)}
:=\|\phi_{L,\bigtriangledown,a}^\ast(f)\|_{W\!L^{p(\cdot)}(\rn)}$.
Then the \emph{variable weak Hardy space}
$W\!H_{L,\max}^{p(\cdot),\phi,a}(\rn)$ is defined to be the completion of
$\mathbb{W\!H}_{L,\max}^{p(\cdot),\phi,a}(\rn)$ with respect to the quasi-norm
$\|\cdot\|_{W\!H_{L,\max}^{p(\cdot),\phi,a}(\rn)}$.

Particularly, if $\phi(x):=e^{-x^2}$ for any $x\in\rr$, then we use $f_{L,\btd}^\ast$ to denote
$\phi_{L,\btd,1}^\ast(f)$ and, in this case, denote the space $W\!H_{L,\max}^{p(\cdot),\phi,1}(\rn)$
simply by $W\!H_{L,\max}^{p(\cdot)}(\rn)$.

\item[(ii)] For any $f\in L^2(\rn)$, define the \emph{grand non-tangential maximal function
$\mathcal G_{L,\btd}^\ast(f)$} of $f$ by setting, for any $x\in \rn$,
$$\mathcal G_{L,\btd}^\ast(f)(x):=\sup_{\phi\in \mathcal F(\rr)}\phi_{L,\btd,1}^\ast(f)(x),$$
where $\mathcal F(\rr)$ denotes the set of all even functions $\phi\in\cs(\rr)$ satisfying
$\phi(0)\neq 0$ and
$$\sum_{k=0}^N\int_\rr(1+|x|)^N\lf|\frac{d^k\phi(x)}{dx^k}\r|^2\,dx\le 1$$
with $N$ being a large enough number depending on $p(\cdot)$ and $n$.
Then the \emph{variable weak Hardy space $W\!H_{L,\max}^{p(\cdot),\cf}(\rn)$} is defined in the same
way as $W\!H_{L,\max}^{p(\cdot),\phi,a}(\rn)$ but with $\phi_{L,\btd,a}^\ast$ replaced by
$\mathcal G_{L,\btd}^\ast(f)$.
\end{enumerate}
 \end{defn}

The non-tangential maximal function characterizations of the space $W\!H_L^{p(\cdot)}(\rn)$
are stated as follows.

\begin{thm}\label{t-3-16}
Let $p(\cdot)\in C^{\log}(\rn)$ with $p_+\in(0,1]$, $M\in(\frac n2[\frac1{p_-}-\frac12],\fz)$
and $L$ be a linear operator on $L^2(\rn)$ as in Remark \ref{r-3-30}(ii),
where $p_-$ and $p_+$ are given by \eqref{2.1x}.
Then, for any $a\in(0,\fz)$ and $\phi$ as in Definition \ref{d-3-16}, the spaces
$W\!H_{L,{\rm at},M}^{p(\cdot)}(\rn)$, $W\!H_{L,\max}^{p(\cdot),\cf}(\rn)$
and $W\!H_{L,\max}^{p(\cdot),\phi,a}(\rn)$ coincide with equivalent quasi-norms.
\end{thm}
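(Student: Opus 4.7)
The plan is to establish the three-way equivalence via a cycle of inclusions, closely paralleling the template used in the proof of Theorem \ref{t-1-29} and the arguments of \cite{zy15}.

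First, I would prove the inclusion $W\!H_{L,{\rm at},M}^{p(\cdot)}(\rn)\subseteq W\!H_{L,\max}^{p(\cdot),\mathcal F}(\rn)$. Mirroring the claim \eqref{1-29v} that carried the first half of the proof of Theorem \ref{t-1-29}, the key technical step is to establish, for any $(p(\cdot),2,M)_L$-atom $a$ associated with a ball $B$ and any $\beta\in(\frac{n}{2}[\frac{1}{p_-}-\frac12],\,M)$, the uniform $L^2$ off-diagonal bound
\begin{equation*}
\lf\|\mathcal G_{L,\btd}^\ast(a)\r\|_{L^2(U_k(B))}\le C\, 2^{-2k\beta}|B|^{1/2}\|\chi_B\|_\vp^{-1}
\end{equation*}
for every $k\in\zz_+$. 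For $k\le 4$ this follows from the $L^2$-boundedness of $\mathcal G_{L,\btd}^\ast$ (a consequence of the functional calculus and the Gaussian estimate). For large $k$, one writes $a=L^M b$ and splits the $(y,t)$-integration analogously to $\mathrm{J}_1$, $\mathrm{J}_2$, $\mathrm{J}_3$ in \eqref{1-29y}; the spectral identity $\phi(t\sqrt L)=\frac{1}{2\pi}\int_{\rr}\widehat\phi(s)\cos(ts\sqrt L)\,ds$ together with the finite-speed-of-propagation property of $\cos(ts\sqrt L)$, available under the Gaussian heat kernel bound of Remark \ref{r-3-30}(ii), transfers the off-diagonal decay of $e^{-t^2L}$ to $\phi(t\sqrt L)$ uniformly in $\phi\in\mathcal F(\rr)$, where the defining seminorm condition on $\mathcal F(\rr)$ supplies the needed control of $\widehat\phi$. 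Once this estimate is in hand, the exact run after \eqref{1-29v} in the proof of Theorem \ref{t-1-29}---decomposing $\{x\in\rn:\ \mathcal G_{L,\btd}^\ast(f)(x)>\alpha\}$ at the threshold $2^{i_0}\le\alpha<2^{i_0+1}$, splitting the atomic sum at $i_0$, and invoking \eqref{3-4y}, Lemma \ref{l-1-30} together with Remarks \ref{r-1-30} and \ref{r-1-30x}---yields $\|f\|_{W\!H_{L,\max}^{p(\cdot),\mathcal F}(\rn)}\ls\|f\|_{W\!H_{L,{\rm at},M}^{p(\cdot)}(\rn)}$.

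Second, the inclusion $W\!H_{L,\max}^{p(\cdot),\mathcal F}(\rn)\subseteq W\!H_{L,\max}^{p(\cdot),\phi,a}(\rn)$ is essentially formal: for the $\phi$ in the theorem statement, a suitable positive multiple $c_\phi\phi$ lies in $\mathcal F(\rr)$, giving the pointwise comparison $\phi_{L,\btd,1}^\ast(f)\le c_\phi^{-1}\mathcal G_{L,\btd}^\ast(f)$; a change-of-aperture estimate in $W\!L^{p(\cdot)}(\rn)$, obtained from the Fefferman--Stein vector-valued inequality in Lemma \ref{l-2-1x} combined with the usual tent/maximal-function decomposition, then supplies $\|\phi_{L,\btd,a}^\ast(f)\|_\wlv\ls\|\phi_{L,\btd,1}^\ast(f)\|_\wlv$, completing this step. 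Third, to close the cycle, I would prove $W\!H_{L,\max}^{p(\cdot),\phi,a}(\rn)\subseteq W\!H_{L,{\rm at},M}^{p(\cdot)}(\rn)$ via the intermediate space $W\!H_L^{p(\cdot)}(\rn)$: under the Gaussian upper bound, a subordination identity representing $(t^2L)e^{-t^2L}$ as a weighted mean of $\phi(s\sqrt L)$ produces a pointwise majorisation $S_L(f)(x)\ls\phi_{L,\btd,a}^\ast(f)(x)$, which upon taking $W\!L^{p(\cdot)}(\rn)$-quasi-norms gives $W\!H_{L,\max}^{p(\cdot),\phi,a}(\rn)\subseteq W\!H_L^{p(\cdot)}(\rn)$; combining with Theorem \ref{t-1-29}, which identifies $W\!H_L^{p(\cdot)}(\rn)$ with $W\!H_{L,{\rm at},M}^{p(\cdot)}(\rn)$, closes the chain.

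The main obstacle is the uniform-in-$\phi$ off-diagonal estimate of Step one. While the Davies--Gaffney bound directly controls the single multiplier $e^{-t^2L}$ used for $S_L$ in Theorem \ref{t-1-29}, $\mathcal G_{L,\btd}^\ast$ takes a supremum over the entire class $\mathcal F(\rr)$, so one must transfer Gaussian-type kernel bounds to arbitrary Schwartz multipliers with explicit quantitative dependence on the seminorm defining $\mathcal F(\rr)$. The wave-equation/finite-speed-of-propagation approach is the natural route, but careful bookkeeping of the weights $(1+|x|)^N$ in the $\mathcal F(\rr)$ norm is required to ensure that the $M$-fold cancellation built into the $(p(\cdot),2,M)_L$-atom wins uniformly over $\phi\in\mathcal F(\rr)$ with the decay rate $2^{-2k\beta}$ needed to feed into Lemma \ref{l-1-30}.
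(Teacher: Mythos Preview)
Your third step contains a genuine gap: the claimed pointwise majorisation $S_L(f)(x)\lesssim\phi_{L,\bigtriangledown,a}^\ast(f)(x)$ does not hold. The Lusin area function and the non-tangential maximal function are comparable only in norm (via good-$\lambda$ inequalities or atomic decompositions), not pointwise; no ``subordination identity'' will produce such a bound, because the cone integral defining $S_L$ accumulates contributions across all scales, whereas $\phi_{L,\bigtriangledown,a}^\ast$ records only a supremum. Consequently your route from $W\!H_{L,\max}^{p(\cdot),\phi,a}(\rn)$ back to the atomic space via $W\!H_L^{p(\cdot)}(\rn)$ collapses. The paper closes this inclusion \emph{directly}, without passing through $S_L$: using Lemma \ref{l-3-16y} to replace $\phi$ by the heat kernel, it invokes the Song--Yan construction \cite[pp.\,476--479]{sy16} to build, from the level sets $O_i:=\{x:\mathcal N_L^\ast(f)(x)>2^i\}$ of an auxiliary maximal function $\mathcal N_L^\ast$ (combining $t^2Le^{-t^2L}$ and an $\eta(t\sqrt L)$ arising from a Calder\'on reproducing formula), an explicit weak atomic $(p(\cdot),2,M)_L$-representation of $f$ supported on Whitney balls of the $O_i$. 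The quasi-norm control then follows from Remark \ref{r-1-30}(i) and Lemma \ref{l-3-16y}.

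Your first step is workable in principle but takes a harder road than the paper. Rather than proving an $L^2$ annular estimate for $\mathcal G_{L,\bigtriangledown}^\ast(a)$ uniformly over $\phi\in\mathcal F(\rr)$ via finite speed of propagation, the paper first reduces $\mathcal G_{L,\bigtriangledown}^\ast$ to the single heat maximal $f_{L,\bigtriangledown}^\ast$: writing $\widetilde\psi:=[\phi(0)]^{-1}\phi-e^{-x^2}$ (so $\widetilde\psi(0)=0$), it shows, via Peetre-type inequalities from \cite[(3.3),\,(3.4)]{sy16}, the Aoki--Rolewicz theorem, and the aperture-change Lemma \ref{l-3-16x}, that $\|\sup_{\phi\in\mathcal F}\widetilde\psi_{L,\bigtriangledown,1}^\ast(f)\|_{\wlv}\lesssim\|f_{L,\bigtriangledown}^\ast\|_{\wlv}$. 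One then only needs the \emph{pointwise} off-support decay $(a)_{L,\bigtriangledown}^\ast(x)\lesssim r_B^{n+\delta}|x-x_B|^{-(n+\delta)}\|\chi_B\|_\lv^{-1}$ for the heat maximal of a single atom (taken from \cite[(3.10)]{zy15}), after which the argument of \eqref{3-18w} applies. This sidesteps entirely the uniform-in-$\phi$ bookkeeping you identify as the main obstacle.
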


By Theorems \ref{t-3-16} and \ref{t-1-29}, we immediate obtain the following
corollary, the details being omitted.

\begin{cor}\label{c-4-25}
Under the same notation as in Theorem \ref{t-3-16}, it holds true that
the spaces
$$W\!H_{L}^{p(\cdot)}(\rn),\ W\!H_{L,\max}^{p(\cdot),\cf}(\rn)
\ {\rm and}\ W\!H_{L,\max}^{p(\cdot),\phi,a}(\rn)$$
coincide with equivalent quasi-norms.
\end{cor}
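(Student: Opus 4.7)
The plan is extremely short because, as the introduction already flags in Section~\ref{s-intro}, Corollary~\ref{c-4-25} is designed to be a formal chain composition of two previously established equalities. I do not need any new estimates, atomic/molecular decompositions, Davies--Gaffney arguments, or functional calculus computations; everything heavy has been done inside the proofs of Theorem~\ref{t-1-29} and Theorem~\ref{t-3-16}.

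First I would check that the hypotheses of Theorem~\ref{t-1-29} are satisfied under the notation of Theorem~\ref{t-3-16}. Recall that the operator $L$ in Theorem~\ref{t-3-16} is of the type described in Remark~\ref{r-3-30}(ii), hence is one-to-one, non-negative and self-adjoint on $L^2(\rn)$, and its heat kernel satisfies Gaussian upper bounds; in particular $L$ verifies both Assumption~\ref{as-a} (with $\omega=0$) and Assumption~\ref{as-b}. Moreover, $p(\cdot)\in C^{\log}(\rn)$ with $p_+\in(0,1]$ and $M\in\nn\cap(\frac{n}{2}[\frac{1}{p_-}-\frac{1}{2}],\fz)$, which are exactly the parameter assumptions appearing in Theorem~\ref{t-1-29}. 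Therefore Theorem~\ref{t-1-29} applies and yields
\begin{equation*}
W\!H_L^{p(\cdot)}(\rn)=W\!H_{L,\mathrm{at},M}^{p(\cdot)}(\rn)
\end{equation*}
with equivalent quasi-norms.

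Next I would invoke Theorem~\ref{t-3-16}, whose conclusion is precisely that, under these same hypotheses and for every $a\in(0,\fz)$ and every admissible $\phi$ as in Definition~\ref{d-3-16}, the three spaces
\begin{equation*}
W\!H_{L,\mathrm{at},M}^{p(\cdot)}(\rn),\quad W\!H_{L,\max}^{p(\cdot),\cf}(\rn),\quad W\!H_{L,\max}^{p(\cdot),\phi,a}(\rn)
\end{equation*}
coincide with equivalent quasi-norms. Combining this with the displayed equivalence obtained from Theorem~\ref{t-1-29} by transitivity gives the required three-way coincidence
\begin{equation*}
W\!H_L^{p(\cdot)}(\rn)=W\!H_{L,\max}^{p(\cdot),\cf}(\rn)=W\!H_{L,\max}^{p(\cdot),\phi,a}(\rn),
\end{equation*}
with quasi-norms equivalent up to multiplicative constants that depend only on $n$, $p(\cdot)$, $M$, $a$, and $\phi$, but not on the function $f$.

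There is essentially no obstacle in this step; the only thing to be a bit careful about is to state that the equivalence constants are inherited by composition, and to note explicitly that the independence of the choice of $a\in(0,\fz)$ and of the admissible $\phi$ is already contained in Theorem~\ref{t-3-16}. Consequently the proof of Corollary~\ref{c-4-25} reduces to a one-line citation of Theorems~\ref{t-1-29} and~\ref{t-3-16}, after which the details can be omitted.
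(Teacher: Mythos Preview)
Your proposal is correct and matches the paper's approach exactly: the paper states that Corollary~\ref{c-4-25} follows immediately from Theorems~\ref{t-3-16} and~\ref{t-1-29}, with details omitted. Your verification that the hypotheses of Theorem~\ref{t-1-29} are met under the setting of Theorem~\ref{t-3-16} (via Remark~\ref{r-3-30}(ii)) is the only point requiring any care, and you have handled it correctly.
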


To prove Theorem \ref{t-3-16}, we need several lemmas. The following conclusion
is just \cite[Corollary 3.3]{yyyz16}.

\begin{lem}\label{l-3-16}
Let $p(\cdot)\in C^{\log}(\rn)$ satisfy $1<p_-\le p_+<\fz$. Then the Hardy-Littlewood maximal
operator $\cm$ is bounded on $\wlv$.
\end{lem}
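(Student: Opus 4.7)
The plan is to combine the known strong-type boundedness of $\cm$ on variable Lebesgue spaces with a Marcinkiewicz-style truncation. First, since $p(\cdot)\in C^{\log}(\rn)$ with $1<p_-\le p_+<\fz$, one can choose $s\in(1,p_-)$ so that $p(\cdot)/s\in C^{\log}(\rn)$ with $(p/s)_->1$; consequently, by Remark \ref{r-1-30}(ii), $\cm$ is bounded on $L^{p(\cdot)/s}(\rn)$. I would use this rescaled exponent as the workhorse.

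Next, for each $f\in\wlv$ and $\az\in(0,\fz)$, I would decompose $f=f^\az+f_\az$ with $f^\az:=f\chi_{\{|f|>\az/2\}}$. Since $|f_\az|\le\az/2$ pointwise, the sublinearity and $L^\fz$-contractivity of $\cm$ give $\cm(f_\az)\le\az/2$, and hence
\begin{equation*}
\lf\{x\in\rn:\ \cm(f)(x)>\az\r\}\subset\lf\{x\in\rn:\ \cm(f^\az)(x)>\az/2\r\}.
\end{equation*}
The $\lv$-norm of the indicator of the right-hand set would then be estimated via Chebyshev's inequality in $L^{p(\cdot)/s}(\rn)$, the pointwise Jensen bound $[\cm(f^\az)]^s\le\cm(|f^\az|^s)$ (valid for $s\ge 1$), and the $L^{p(\cdot)/s}(\rn)$-boundedness of $\cm$:
\begin{align*}
\az^s\lf\|\chi_{\{\cm(f^\az)>\az/2\}}\r\|_\lv^s
&\sim \az^s\lf\|\chi_{\{[\cm(f^\az)]^s>(\az/2)^s\}}\r\|_{L^{p(\cdot)/s}(\rn)}\\
&\ls \lf\|\cm(|f^\az|^s)\r\|_{L^{p(\cdot)/s}(\rn)}
\ls \lf\||f^\az|^s\r\|_{L^{p(\cdot)/s}(\rn)}.
\end{align*}

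The final and most delicate step, which I expect to be the main obstacle, is to show $\lf\||f^\az|^s\r\|_{L^{p(\cdot)/s}(\rn)}\ls \az^{s-1}\|f\|_\wlv$. This does not follow directly from the weak-$\lv$ bound $\lz\|\chi_{\{|f|>\lz\}}\|_\lv\le\|f\|_\wlv$, since the naive layer-cake integral $\int_{\az/2}^\fz\lz^{-1}\,d\lz$ diverges. My plan to overcome this is: (i) use the pointwise layer-cake formula
\begin{equation*}
|f^\az|^s=(\az/2)^s\chi_{\{|f|>\az/2\}}+s\int_{\az/2}^\fz\lz^{s-1}\chi_{\{|f|>\lz\}}\,d\lz;
\end{equation*}
(ii) apply Minkowski's integral inequality in $L^{p(\cdot)/s}(\rn)$, which is a genuine Banach space because $(p/s)_->1$; and (iii) replace the weak-$\lv$ estimate on level-set indicators by the sharper Lebesgue-measure bound $|\{|f|>\lz\}|\le(\|f\|_\wlv/\lz)^{p_-}$ for $\lz$ large, obtained by combining $\lz\|\chi_{\{|f|>\lz\}}\|_\lv\le\|f\|_\wlv$ with the lower inequality $\|\chi_E\|_\lv\gs|E|^{1/p_-}$ on sets of measure at most one (a direct consequence of Lemma \ref{l-2-2} together with the $C^{\log}$ regularity). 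The resulting integrand behaves like $\lz^{s-1-p_-}$ on $(\az/2,\fz)$, which is integrable precisely because $s<p_-$, producing a bound proportional to $\az^{s-p_-}\|f\|_\wlv^{p_-}$; then the log-H\"older regularity is used to patch the Lebesgue-measure bound back into the variable-exponent norm of $L^{p(\cdot)/s}(\rn)$, yielding $\|f^\az\|_\lv^s\ls\az^{s-1}\|f\|_\wlv$. Combining with the preceding display gives $\az\|\chi_{\{\cm(f)>\az\}}\|_\lv\ls\|f\|_\wlv$, uniformly in $\az\in(0,\fz)$, which is exactly the claim.
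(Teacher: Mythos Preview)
Your Marcinkiewicz-style scheme has a genuine gap at the step you yourself flagged as delicate: the bound $\||f^\az|^s\|_{L^{p(\cdot)/s}(\rn)}\ls\az^{s-1}\|f\|_\wlv$ is false in general, and your proposed ``patch'' cannot rescue it. To see the obstruction, take the constant case $p(\cdot)\equiv p\in(1,\fz)$ and $f(x):=|x|^{-n/p}$. Then $f\in L^{p,\fz}(\rn)=\wlv$, yet for every $\az>0$ the truncation $f^\az=|x|^{-n/p}\chi_{B(0,(\az/2)^{-p/n})}$ satisfies $\|f^\az\|_{L^p(\rn)}=\fz$, so $\||f^\az|^s\|_{L^{p/s}(\rn)}=\|f^\az\|_{L^p(\rn)}^s=\fz$. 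More structurally, after Minkowski the integrand is $\lz^{s-1}\|\chi_{\{|f|>\lz\}}\|_{L^{p(\cdot)/s}(\rn)}$, and the direct weak bound gives $\|\chi_{\{|f|>\lz\}}\|_{L^{p(\cdot)/s}(\rn)}=\|\chi_{\{|f|>\lz\}}\|_\lv^s\le(\|f\|_\wlv/\lz)^s$, producing the divergent $\int\lz^{-1}\,d\lz$. Your step (iii) replaces this by the Lebesgue-measure estimate $|\{|f|>\lz\}|\le(\|f\|_\wlv/\lz)^{p_-}$, but converting back to the norm costs exactly what you gained: for $|E|\le1$ one has only $\|\chi_E\|_{L^{p(\cdot)/s}(\rn)}\le|E|^{s/p_+}$, so the integrand behaves like $\lz^{s-1-sp_-/p_+}$, integrable at infinity only if $p_->p_+$, which never happens.

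The underlying issue is that Chebyshev in $L^{p(\cdot)/s}(\rn)$ keeps you at the \emph{same} exponent $p(\cdot)$ after unwinding, and for the endpoint weak space this gives no room. One correct route is to first dyadically slice $f^\az=\sum_{k\ge0}f\chi_{A_k}$ with $A_k:=\{2^{k-1}\az<|f|\le2^k\az\}$, use the pointwise bound $\cm(f\chi_{A_k})\le2^k\az\,\cm(\chi_{A_k})$, and control $\|\chi_{\{\cm(\chi_{A_k})>c_k\}}\|_\lv$ via $\chi_{\{\cm(\chi_{A_k})>c_k\}}\le c_k^{-r}[\cm(\chi_{A_k})]^r$ with $r\in(1/p_-,1)$ together with the strong boundedness of $\cm$ on $L^{rp(\cdot)}(\rn)$; choosing $c_k\sim2^{-\epsilon k}$ with $0<\epsilon<p_--1$ and $r<1/(1+\epsilon)$ makes the resulting series converge. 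The paper itself does not give an argument here but simply invokes \cite[Corollary~3.3]{yyyz16}.
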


\begin{lem}\label{l-3-16x}
Let $L$ be a linear operator on $L^2(\rn)$ as in Remark \ref{r-3-30}(ii),
$p(\cdot)\in C^{\log}(\rn)$, $\az_1,\ \az_2\in(0,\fz)$
and $\vz\in \cs(\rr)$ be an even function with $\vz(0)=1$. If $\lz\in(n/p_-,\fz)$, then there
exists a positive constant $C$ such that, for any $f\in L^2(\rn)$,
\begin{equation}\label{3-16x}
\|\vz_{L,\btd,\az_1}^\ast(f)\|_{\wlv}
\le C\lf(1+\frac{\az_1}{\az_2}\r)^\lz\|\vz_{L,\btd,\az_2}^\ast(f)\|_{\wlv}.
\end{equation}
\end{lem}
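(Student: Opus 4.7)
The plan is to reduce \eqref{3-16x} to a pointwise Fefferman--Stein-type estimate relating $\vz_{L,\btd,\az_1}^\ast(f)$ to the Hardy--Littlewood maximal function of $[\vz_{L,\btd,\az_2}^\ast(f)]^r$, and then to invoke Lemma \ref{l-3-16} on a suitably rescaled variable weak Lebesgue space.

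The case $\az_1\le\az_2$ is immediate, since then $\bgz_{\az_1}(x)\subset\bgz_{\az_2}(x)$ for every $x\in\rn$, whence $\vz_{L,\btd,\az_1}^\ast(f)\le\vz_{L,\btd,\az_2}^\ast(f)$ pointwise, and $(1+\az_1/\az_2)^\lz\ge 1$. Thus I concentrate on $\az_1>\az_2$. The key step is the pointwise bound: for every $r\in(0,\fz)$, every $x\in\rn$ and every $f\in L^2(\rn)$,
\begin{equation*}
\vz_{L,\btd,\az_1}^\ast(f)(x)\le C\lf(1+\frac{\az_1}{\az_2}\r)^{n/r}\lf\{\cm\lf([\vz_{L,\btd,\az_2}^\ast(f)]^r\r)(x)\r\}^{1/r}
\end{equation*}
with $C$ an absolute constant. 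To establish this, fix $(y,t)\in\rnn$ with $|y-x|<\az_1 t$; for every $z\in B(y,\az_2 t)$ the pair $(y,t)$ belongs to $\bgz_{\az_2}(z)$, so by the definition of the non-tangential maximal function, $|\vz(t\sqrt L)(f)(y)|\le\vz_{L,\btd,\az_2}^\ast(f)(z)$. Raising this to the $r$-th power, averaging over $z\in B(y,\az_2 t)$ and then enlarging the integration to $B(x,(\az_1+\az_2)t)\supset B(y,\az_2 t)$ (which holds since $|y-x|<\az_1 t$), I obtain
\begin{equation*}
|\vz(t\sqrt L)(f)(y)|^r\le\frac{|B(x,(\az_1+\az_2)t)|}{|B(y,\az_2 t)|}\,\cm\lf([\vz_{L,\btd,\az_2}^\ast(f)]^r\r)(x).
\end{equation*}
The volume ratio equals $(1+\az_1/\az_2)^n$, so taking the $r$-th root and then the supremum over $(y,t)\in\bgz_{\az_1}(x)$ yields the desired pointwise inequality.

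To conclude, given $\lz>n/p_-$, I choose $r\in(n/\lz,\,p_-)$, which is a non-empty interval exactly because of the hypothesis on $\lz$. Then $(p(\cdot)/r)_-=p_-/r>1$, so by Lemma \ref{l-3-16}, $\cm$ is bounded on $W\!L^{p(\cdot)/r}(\rn)$. Combining the pointwise bound with the elementary scaling identity $\|h^{1/r}\|_\wlv=\|h\|_{W\!L^{p(\cdot)/r}(\rn)}^{1/r}$, which is a direct consequence of the definitions of $\wlv$ and $\lv$ together with Remark \ref{r-vlp}(i), gives
\begin{equation*}
\|\vz_{L,\btd,\az_1}^\ast(f)\|_\wlv\ls\lf(1+\frac{\az_1}{\az_2}\r)^{n/r}\|\vz_{L,\btd,\az_2}^\ast(f)\|_\wlv,
\end{equation*}
and the monotonicity $(1+\az_1/\az_2)^{n/r}\le(1+\az_1/\az_2)^\lz$ (valid since $1+\az_1/\az_2\ge 1$ and $n/r<\lz$) yields \eqref{3-16x}.

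The main obstacle is essentially bookkeeping: verifying the scaling identity for the weak variable norm and confirming that the open condition $\lz>n/p_-$ is exactly what permits choosing $r<p_-$ with $n/r<\lz$, so that Lemma \ref{l-3-16} can be legitimately applied to the rescaled exponent $p(\cdot)/r$. Neither point is deep, but both require some care because the weak variable norm does not enjoy all the comforts of the classical Lebesgue setting.
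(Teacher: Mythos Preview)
Your proof is correct. Both your argument and the paper's reduce the aperture comparison to a pointwise bound of the form $\vz_{L,\btd,\az_1}^\ast(f)(x)\ls(1+\az_1/\az_2)^{n/r}\{\cm([\vz_{L,\btd,\az_2}^\ast(f)]^r)(x)\}^{1/r}$ and then invoke Lemma~\ref{l-3-16} on the rescaled exponent $p(\cdot)/r$ with $r\in(n/\lz,p_-)$. The difference lies in how the pointwise bound is obtained: the paper passes through the Peetre-type grand maximal function $\sup_{t,y}|F(y,t)|(1+|x-y|/(\az t))^{-\lz}$ and cites \cite[Lemma~3.1]{zy15} for the estimate \eqref{3-16y}, whereas you give the classical self-contained averaging trick (for $z\in B(y,\az_2 t)$ the point $(y,t)$ lies in $\bgz_{\az_2}(z)$, so $|\vz(t\sqrt L)(f)(y)|\le\vz_{L,\btd,\az_2}^\ast(f)(z)$, and one averages). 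Your route is more elementary and avoids the external reference, at the cost of not producing the Peetre-type inequality \eqref{3-16y} itself---which the paper reuses in the proof of Lemma~\ref{l-3-16y}. For the purposes of Lemma~\ref{l-3-16x} alone, your argument is cleaner.
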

\begin{proof}
For any Borel measurable function $F$ on $\rr_+^{n+1}$, define its \emph{non-tangential maximal function
$M_{\az,\btd}(F)$} with aperture $\az\in(0,\fz)$, by setting, for any $x\in\rn$,
$$M_{\az,\btd}(F)(x):=\sup_{t\in(0,\fz),|y-x|<\az t}|F(y,t)|.$$
By the proof of \cite[Lemma 3.1]{zy15}, we know that, for any $\lz\in(n/p_-,\fz)$,
\begin{equation}\label{3-16y}
\sup_{\gfz{t\in(0,\fz)}{y\in\rn}}|F(y,t)|\lf(1+\frac{|x-y|}{\az t}\r)^{-\lz}
\ls \lf(1+\frac{\az_1}{\az_2}\r)^\lz\lf\{\cm\lf([M_{\az_2,\btd}(F)]^{n/\lz}\r)(x)
\r\}^{\frac \lz n}.
\end{equation}
From this, the boundedness of $\cm$ on $\wlv$ (see Lemma \ref{l-3-16}) and the fact that,
for any $x\in\rn$,
$$M_{\az_1,\btd}(F)(x)\ls \sup_{\gfz{t\in(0,\fz)}{y\in\rn}}|F(y,t)|
\lf(1+\frac{|x-y|}{\az t}\r)^{-\lz},$$
we deduce that
\begin{equation*}
\|M_{\az_1,\btd}(F)\|_{\wlv}\ls \lf(1+\frac{\az_1}{\az_2}\r)^{\lz}\|M_{\az_2,\btd}(F)\|.
\end{equation*}
By this and taking $F(x,t):=\vz(t\sqrt L)(f)(x)$ with any $x\in\rn$ and $t\in(0,\fz)$,
we conclude that \eqref{3-16x} holds true, which completes the proof
of Lemma \ref{l-3-16x}.
\end{proof}

\begin{lem}\label{l-3-16y}
Let $L$ be a linear operator on $L^2(\rn)$ as in Remark \ref{r-3-30}(ii),
$p(\cdot)\in C^{\log}(\rn)$, $\psi_1$, $\psi_2\in \cs(\rr)$
be even functions with $\psi_1(0)=1=\psi_2(0)$ and $\az_1,\ \az_2\in(0,\fz)$. Then there
exists a positive constant $C\in(0,\fz)$, depending on $\psi_1,\ \psi_2,\ \az_1$ and $\az_2$,
such that, for any $f\in L^2(\rn)$,
\begin{equation*}
\|(\psi_1)_{L,\btd,\az_1}^\ast (f)\|_{\wlv}\le C \|(\psi_2)_{L,\btd,\az_2}^\ast (f)\|_{\wlv}.
\end{equation*}
\end{lem}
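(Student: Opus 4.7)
The plan is to first reduce the problem to a common aperture via Lemma \ref{l-3-16x}, then establish a pointwise comparison by means of a Calder\'on-type reproducing formula associated with the non-negative self-adjoint operator $L$, and finally invoke \eqref{3-16y} together with Lemma \ref{l-3-16} to conclude. Indeed, applying Lemma \ref{l-3-16x} twice (once with $\varphi=\psi_1$ and once with $\varphi=\psi_2$), one can replace $\az_1$ and $\az_2$ by any convenient common aperture $\az\in(0,\fz)$ at the cost of a multiplicative constant, so that it suffices to prove
$$\|(\psi_1)_{L,\btd,\az}^\ast(f)\|_{\wlv}\le C\,\|(\psi_2)_{L,\btd,\az}^\ast(f)\|_{\wlv}.$$

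Arguing in parallel to the proof of Lemma \ref{l-3-16x}, I would reduce matters further to a pointwise domination of the form
$$(\psi_1)_{L,\btd,\az}^\ast(f)(x)\le C\sup_{(y,t)\in\rnn}|\psi_2(t\sqrt{L})f(y)|\lf(1+\frac{|x-y|}{\az' t}\r)^{-\lz},$$
for some $\lz\in(n/p_-,\fz)$ and some $\az'\in(0,\fz)$; once this is available, the argument built around \eqref{3-16y} and the boundedness of $\cm$ on the (rescaled) weak variable Lebesgue space provided by Lemma \ref{l-3-16} closes out the proof. To obtain this pointwise bound, I would use a Calder\'on reproducing formula adapted to $\psi_2$. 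Since $\psi_2$ is even and $\psi_2(0)=1$, one can pick an even function $\eta\in\cs(\rr)$ with $\eta(0)=0$ and rapid decay at $\fz$ so that the single normalisation $\int_0^\fz\psi_2(v)\eta(v)\,\frac{dv}{v}=1$ holds; a change of variables then yields $\int_0^\fz\psi_2(s\lz)\eta(s\lz)\,\frac{ds}{s}=1$ for every $\lz>0$, and the spectral theorem for $L$ transfers this into the operator identity
$$f=\int_0^\fz\eta(s\sqrt{L})\,\psi_2(s\sqrt{L})f\,\frac{ds}{s}\quad\text{in } L^2(\rn),$$
so that, applying $\psi_1(t\sqrt{L})$ to both sides,
$$\psi_1(t\sqrt{L})f(y)=\int_0^\fz\int_\rn K_{t,s}(y,z)\,[\psi_2(s\sqrt{L})f](z)\,dz\,\frac{ds}{s},$$
where $K_{t,s}$ denotes the kernel of the composition $\psi_1(t\sqrt{L})\eta(s\sqrt{L})$.

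The main obstacle, and the technical heart of the argument, is to prove the two-parameter kernel bound
$$|K_{t,s}(y,z)|\le C_N\min\lf\{\frac{s}{t},\,\frac{t}{s}\r\}^N(t+s)^{-n}\lf(1+\frac{|y-z|}{t+s}\r)^{-N}$$
for arbitrarily large $N\in\nn$. It is exactly here that the Gaussian upper bounds on the heat kernel of $\{e^{-tL}\}_{t>0}$ from Remark \ref{r-3-30}(ii) enter in an essential way: through the spectral multiplier theory for non-negative self-adjoint operators with Gaussian heat kernel bounds (the same mechanism underlying \cite{hlmmy}), one obtains the spatial decay in $|y-z|/(t+s)$, while the scale factor $\min(s/t,t/s)^N$ is produced by the vanishing of $\eta$ to high order at the origin together with its Schwartz decay at $\fz$. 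Once this estimate is in hand, substituting it into the reproducing formula above and splitting the $s$-integral into the regions $s\le t$ and $s>t$ yields the desired pointwise inequality, after which \eqref{3-16y} and Lemma \ref{l-3-16}, applied to the exponent $p(\cdot)/r$ for a suitable $r\in(0,p_-)$ (so that the rescaled exponent exceeds $1$), complete the proof.
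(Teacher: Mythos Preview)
Your reduction to a common aperture via Lemma \ref{l-3-16x} is correct and matches the paper, and the closing step through \eqref{3-16y} and Lemma \ref{l-3-16} is also the right endgame. The gap is in the middle: the claimed two-parameter kernel bound
\[
|K_{t,s}(y,z)|\le C_N\min\lf\{\frac{s}{t},\frac{t}{s}\r\}^N(t+s)^{-n}\lf(1+\frac{|y-z|}{t+s}\r)^{-N}
\]
is \emph{false} in the regime $t\ll s$. The spectral multiplier of $\psi_1(t\sqrt{L})\eta(s\sqrt{L})$ is $\mu\mapsto\psi_1(t\mu)\eta(s\mu)$; since $\eta$ is Schwartz this is effectively supported where $s\mu\ls1$, and there $t\mu\ls t/s\ll1$, so $\psi_1(t\mu)\approx\psi_1(0)=1$. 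Hence $K_{t,s}$ is essentially the kernel of $\eta(s\sqrt{L})$ itself, with no $(t/s)^N$ gain. Plugging this into your reproducing formula and bounding $|\psi_2(s\sqrt{L})f(z)|$ by the Peetre-type maximal function leaves you with $\int_t^\fz 1\cdot\frac{ds}{s}=\fz$, so the argument diverges. Neither the vanishing of $\eta$ at the origin nor its Schwartz decay at infinity can manufacture the missing $(t/s)^N$ factor; that factor would require $\psi_1$ to vanish at the origin, which it does not.

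The paper supplies exactly the missing cancellation by the simple device of writing $\psi:=\psi_1-\psi_2$, so that $\psi(0)=0$. Then
\[
\|(\psi_1)_{L,\btd,\az_1}^\ast(f)\|_{\wlv}\ls\|\psi_{L,\btd,\az_1}^\ast(f)\|_{\wlv}+\|(\psi_2)_{L,\btd,\az_1}^\ast(f)\|_{\wlv},
\]
and the second term is handled directly by Lemma \ref{l-3-16x}. For the first term, because $\psi(0)=0$, the pointwise Peetre-type domination
\[
\psi_{L,\btd,1}^\ast(f)(x)\ls\sup_{t>0,\,y\in\rn}|\psi_2(t\sqrt{L})f(y)|\lf(1+\frac{|x-y|}{t}\r)^{-\lz}
\]
(for $\lz\in(n/p_-,2M)$) is now legitimate---this is precisely what the vanishing $\psi(0)=0$ buys in the kernel estimates of \cite[(3.3) and (3.4)]{sy16}, and it is the step your proposal cannot reach. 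After that, \eqref{3-16y} and Lemma \ref{l-3-16} finish the proof as you indicated.
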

\begin{proof}
Let $\psi:=\psi_1-\psi_2$. Then it is easy to see that
$$\|(\psi_1)_{L,\btd,\az_1}^\ast (f)\|_{\wlv}
\ls \|\psi_{L,\btd,\az_1}^\ast (f)\|_{\wlv}
+\|(\psi_2)_{L,\btd,\az_1}^\ast (f)\|_{\wlv}.$$
Thus, to prove this lemma, it suffices to show that
\begin{equation}\label{3-16z}
\|\psi_{L,\btd,\az_1}^\ast (f)\|_{\wlv}\ls\|(\psi_2)_{L,\btd,\az_2}^\ast (f)\|_{\wlv}
\end{equation}
due to Lemma \ref{l-3-16x}. Moreover, by Lemma \ref{l-3-16x} again, we may assume that
$\az_1=1=\az_2$. By \cite[(3.3) and (3.4)]{sy16}, we find that, for any $\lz\in(n/p_-,2M)$
and $x\in\rn$,
$$\psi_{L,\btd,1}^\ast(f)(x)\ls \sup_{t\in(0,\fz),y\in\rn}|\psi_2(t\sqrt L)(y)|
\lf(1+\frac{|x-y|}{t}\r)^{-\lz},$$
which, combined with \eqref{3-16y} and Lemma \ref{l-3-16}, implies that \eqref{3-16z} holds true.
This finishes the proof of Lemma \ref{l-3-16y}.
\end{proof}

We now prove Theorem \ref{t-3-16}.

\begin{proof}[Proof of Theorem \ref{t-3-16}]
We first show the following inclusion
\begin{equation}\label{3-18v}
\lf[\wha\cap L^2(\rn)\r]\subset \lf[\whf\cap L^2(\rn)\r].
\end{equation}
Let $f\in \wha\cap L^2(\rn)$. Then $f$ has a weak atomic $(p(\cdot),2,M)_L$-representation
\begin{equation}\label{3-18z}
f=\sum_{i\in\zz}\sum_{j\in\nn}\lz_{i,j}\aij\quad {\rm in}\quad L^2(\rn),
\end{equation}
where $\{\aij\}_{i\in\zz,j\in\nn}$ are $(p(\cdot),2,M)_L$-atoms associated, respectively, with balls
$\{\Bij\}_{i\in\zz,j\in\nn}$ and
$$\lij:=2^i\|\chi_{\Bij}\|_\lv$$
satisfying that, for any $i\in\zz$, $\sum_{j\in\nn}\chi_{c\Bij}\ls 1$ with $c\in(0,1]$ being
a positive constant independent of $i$ and $j$, and
\begin{equation*}
\sup_{i\in\zz}\ca\lf(\{\lij\}_{j\in\nn},\{\Bij\}_{j\in\nn}\r)\ls \|f\|_{\wha}.
\end{equation*}

To prove $f\in \whf$, it suffices to show that
\begin{equation}\label{3-18y}
\|f\|_{\whf}:=\|\mathcal G_{L,\btd}^\ast(f)\|_{\wlv}\ls \|f\|_{\wha}.
\end{equation}

For any $\phi \in\cs(\rr)$ and $x\in\rr$, let $\wz\psi(x):=[\phi(0)]^{-1}\phi(x)-e^{-x^2}$.
Then it is easy to see that
\begin{equation}\label{3-18x}
\mathcal G_{L,\btd}^\ast(f)\le \sup_{\phi\in\cf}\wz\psi_{L,\btd,1}^\ast(f)+f_{L,\btd}^\ast.
\end{equation}
Moreover, by some arguments similar to those used in the proofs of \cite[(3.3) and (3.4)]{sy16},
we find that, for any $\lz_0\in(n/p_-, 2M)$ and $x\in\rn$,
\begin{align*}
\sup_{\phi\in\cf}\wz\psi_{L,\btd,1}^\ast(f)(x)
&\ls \sup_{s\in(0,\fz),z\in\rn}e^{-s^2L}(f)(z)\lf(1+\frac{|x-z|}{s}\r)^{-\lz_0}\\
&\sim \sup_{\gfz{s\in(0,\fz)}{z\in\rn}}|e^{-s^2L}(f)(z)|\lf(1+\frac{|x-z|}{s}\r)^{-\lz_0}
\lf\{\chi_{|z-x|<s}(z)+\sum_{k=1}^\fz\chi_{2^{k-1}s\le |x-z|<2^k}(z)\r\}\\
&\ls\sum_{k=0}^\fz2^{-k\lz_0}\sup_{s\in(0,\fz),\ |z-x|<2^ks}|e^{-s^2L}(f)(z)|.
\end{align*}
From this, together with the Fatou lemma in $\wlv$ (see \cite[Lemma 2.12]{yyyz16}) and
Remark \ref{r-3-18}(ii), we deduce that
\begin{align*}
\lf\|\sup_{\phi\in\cf}\wz\psi_{L,\btd,1}^\ast(f)\r\|_{\wlv}
&\ls \lf\|\lim_{N\to\fz}\sum_{k=0}^N2^{-k\lz_0}\sup_{s\in(0,\fz),\ |z-\cdot|<2^ks}
|e^{-s^2L}(f)(z)\r\|_{\wlv}\\
&\ls \liminf_{N\to\fz}\lf\|\sum_{k=0}^N2^{-k\lz_0}\sup_{s\in(0,\fz),\ |z-\cdot|<2^ks}
|e^{-s^2L}(f)(z)\r\|_{\wlv}\\
&\ls \liminf_{N\to\fz}\lf\{\sum_{k=0}^N2^{-k\lz_0 v}
\lf\|\sup_{s\in(0,\fz),\ |z-\cdot|<2^ks}
|e^{-s^2L}(f)(z)\r\|_{\wlv}^v\r\}^{1/v},
\end{align*}
which, together with Lemma \ref{l-3-16x} and choosing $\lz\in(n/p_-,\lz_0)$, implies that
\begin{align*}
\lf\|\sup_{\phi\in\cf}\wz\psi_{L,\btd,1}^\ast(f)\r\|_{\wlv}
&\ls \lf\{\sum_{k=0}^\fz 2^{-k\lz_0 v}2^{k\lz v}
\lf\|\sup_{s\in(0,\fz),\ |z-\cdot|<s}
|e^{-s^2L}(f)(z)\r\|_{\wlv}^v\r\}^{1/v}\\
&\ls \|f_{L,\btd}^\ast\|_{\wlv}.
\end{align*}
Thus, by this, \eqref{3-18x} and Remark \ref{r-3-18}(i), we know that
\begin{equation*}
\|\mathcal G_{L,\btd}^\ast\|_{\wlv}\le
\lf\|\sup_{\phi\in\cf}\wz\psi_{L,\btd,1}^\ast(f)+f_{L,\btd}^\ast\r\|_{\wlv}
\ls \|f_{L,\btd}^\ast\|_{\wlv}.
\end{equation*}
Therefore, to prove \eqref{3-18y}, we only need to show
\begin{equation}\label{3-18u}
\|f_{L,\btd}^\ast\|_{\wlv}\ls \|f\|_{\wha}.
\end{equation}
Since $f$ has a weak atomic $(p(\cdot),2,M)_L$-representation \eqref{3-18z}, it follows that
\begin{align*}
\|f_{L,\btd}^\ast\|_{\wlv}\le
\lf\|\sum_{i\in\zz}\sum_{j\in\nn}|\lij|(\aij)_{L,\btd}^\ast\r\|_{\wlv}.
\end{align*}
Moreover, by an argument similar to that used in the proof of \cite[(3.10)]{zy15}, we
find that, for any $x\in (4\Bij)^\com$,
\begin{align*}
(\aij)_{L,\btd}^\ast(x)\ls \frac{[r_\Bij]^{n+\delta}}{|x-x_{\Bij}|^{n+\delta}}
\frac1{\|\chi_\Bij\|_{\lv}},
\end{align*}
where $\delta\in(n[\frac1{p_-}-1],2M)$, $r_{\Bij}$ and $x_{\Bij}$ denote, respectively,
the radius and the center of the ball $\Bij$. Then, by an argument similar to that used in the proof of
\eqref{3-18w}, we conclude that \eqref{3-18u} holds true. This finishes the proof of
\eqref{3-18v}.

Next, we prove
\begin{equation}\label{3-18b}
\lf[W\!H_{L,\max}^{p(\cdot),\phi,a}(\rn)\cap L^2(\rn)\r]\subset\lf[\wha\cap L^2(\rn)\r].
\end{equation}
Let $f\in W\!H_{L,\max}^{p(\cdot),\phi,a}(\rn)\cap L^2(\rn)$. To prove $f\in \wha$,
by Lemma \ref{l-3-16y}, we only need to show that
\begin{equation}\label{3-18a}
\|f\|_{\wha}\ls \|f\|_{W\!H_{L,\max}^{p(\cdot)}(\rn)}.
\end{equation}
Let $\phi_0\in\cs(\rr)$, $\supp (\phi_0)\subset (-1,1)$, $\Phi:=\widehat{\phi_0}$. Then,
by the functional calculi, we know that
$$f=C_{(M,\Phi)}\int_0^\fz (t^2L)^{M+1}\Phi(t\sqrt L)e^{-t^2L}(f)\,\frac{dt}t
\quad {\rm in}\quad L^2(\rn),$$
where $C_{(M,\Phi)}$ is a positive constant same as in \eqref{4-26x}.
Following \cite[p.\,476]{sy16}, define a function $\eta$ by setting,
$$\eta(x):=C_{(M,\Phi)}\int_1^\fz (tx)^{2M+2}\Phi(tx)e^{-t^2x^2}\,\frac{dt}t,\quad \forall
\,x\in\rr\backslash\{0\},$$
and $\eta(0):=1$. Then $\eta\in\cs(\rr)$ is an even function and, for any $a,\ b\in\rr$,
$$C_{(M,\Phi)}\int_a^b (t^2L)^{M+1}\Phi(t\sqrt L)e^{-t^2L}(f)\,\frac{dt}t=
\eta(a\sqrt L)(f)-\eta(b\sqrt L)(f).$$
For any $x\in\rn$, let
$$\mathcal N_L^\ast(f)(x):=\sup_{t\in(0,\fz),\ |y-x|<5\sqrt{nt}}
\lf[|t^2Le^{-t^2L}(f)(y)|+|\eta(t\sqrt L)(f)(y)|\r]$$
and, for any $i\in\zz$,
$O_i:=\{x\in\rn:\ \mathcal N_L^\ast(f)(x)>2^i\}$. Then $f$ has the following decomposition
$$f=\sum_{i\in\zz}\sum_{j\in\nn}\lij\aij\quad{\rm in}\quad L^2(\rn)$$
(see \cite[pp.\,476-479]{sy16} for more details). Here, for any $i\in\zz$ and $j\in\nn$,
$\aij$ is a $(p(\cdot),2,M)_L$-atom, associated with the ball $30B_{i,j}$, and
$\lij:=2^i\|30B_{i,j}\|_{\lv}$, where $\{B_{i,j}\}_{j\in\nn}$ is the Whitney
decomposition of $O_i$ satisfying that $O_i=\bigcup_{j\in\nn}B_{i,j}$, $\{B_{i,j}\}_{j\in\nn}$
have disjoint interiors and the property of finite overlap.
Moreover, by Remark \ref{r-1-30}(i) and Lemma \ref{l-3-16y},
we conclude that
\begin{align*}
\sup_{i\in\zz}\lf(\{\lij\}_{i\in\zz,j\in\nn},\{30B_{i,j}\}_{i\in\zz,j\in\nn}\r)
&\sim \sup_{i\in\zz}2^i\lf\|\lf\{\sum_{j\in\nn}\chi_{30B_{i,j}}\r\}^\frac1{p_-}\r\|_\lv
\ls \sup_{i\in\zz}2^i\lf\|\sum_{j\in\nn}\chi_{B_{i,j}}\r\|_\lv\\
&\ls \sum_{i\in\zz}2^i\|\chi_{O_i}\|_{\lv}\ls \|\mathcal N_L^\ast(f)\|_{\wlv}
\ls \|f\|_{W\!H_{L,\max}^{p(\cdot)}(\rn)},
\end{align*}
which implies \eqref{3-18a}. This finishes the proof of \eqref{3-18b}.

Finally, the inclusion
$$\lf[\whf\cap L^2(\rn)\r]\subset \lf[W\!H_{L,\max}^{p(\cdot),\phi,a}(\rn)\cap L^2(\rn)\r]$$
is just a consequence of Lemma \ref{l-3-16y}. From this, together with \eqref{3-18v} and
\eqref{3-18b}, we deduce that
$$\lf[\wha\cap L^2(\rn)\r]= \lf[W\!H_{L,\max}^{p(\cdot),\phi,a}(\rn)\cap L^2(\rn)\r]
=\lf[\whf\cap L^2(\rn)\r],$$
which, combined with a density argument, implies that
the spaces $$\wha,\ W\!H_{L,\max}^{p(\cdot),\phi,a}(\rn)\ {\rm and }\ \whf$$
coincide with
equivalent quasi-norms. This finishes the proof of Theorem \ref{t-3-16}.
\end{proof}

\section{Boundedness of the Riesz transform $\nabla L^{-1/2}$\label{s-7}}

In this section, let $L$ be the second-order divergence form elliptic operator as in
Remark \ref{r-3-30}(i). Define the Riesz transform $\nabla L^{-1/2}$ by setting,
for any $f\in L^2(\rn)$ and $x\in\rn$,
$$\nabla L^{-1/2}(f)(x):=\frac1{2\sqrt \pi}\int_0^\fz \nabla e^{-s L}(f)(x)\,\frac{ds}{\sqrt s}.$$
Then we prove that the operator $\nabla L^{-1/2}$ is bounded from $W\!H_L^{p(\cdot)}(\rn)$
to the variable weak Hardy space $W\!H^{p(\cdot)}(\rn)$ introduced in \cite{yyyz16}.
Recall that the domain of $L^{1/2}$ coincides with the Sobolev
space $H^1(\rn)$ (see \cite[Theorem 1.4]{ahlmt02}).
Therefore, for any $f\in L^2(\rn)$, $L^{-1/2}(f)\in H^1(\rn)$ and
$\nabla L^{-1/2}(f)$ stands for the distributional derivatives of $L^{-1/2}(f)$.

For any $N\in\nn$, define
\begin{align*}
\cf_N(\rn):=\lf\{\psi\in\cs(\rn):\ \sum_{\bz\in\zz_+^n,\,|\bz|\le N}\sup_{x\in\rn}(1+|x|)^N
\lf|D^\bz\psi(x)\r|\le 1\r\},
\end{align*}
where, for any $\bz:=(\bz_1,\,\ldots,\,\bz_n)\in\zz_+^n$,
$|\bz|:=\bz_1+\cdots +\bz_n$ and $D^\bz:=(\frac{\partial}{\partial x_1})^{\bz_1}\cdots
(\frac{\partial}{\partial x_n})^{\bz_n}$.
For any $N\in\nn$ and $f\in\cs'(\rn)$, the \emph{grand maximal function $f_{N,+}^\ast$} of $f$
is defined by setting, for any $x\in\rn$,
\begin{align*}
f_{N,+}^\ast(x):=\sup\{|\psi_t\ast f(x)|:\ t\in(0,\,\fz),\, \psi\in \cf_N(\rn)\},
\end{align*}
where, for any $t\in(0,\,\fz)$ and $\xi\in\rn$, $\psi_t(\xi):=t^{-n}\psi(\xi/t)$.

\begin{defn}\label{def hardy}
Let $p(\cdot)\in C^{\log}(\rn)$ and $N\in(\frac{n}{\min\{p_-,1\}}+n+1,\,\fz)$ with $p_-$ as in
\eqref{2.1x}.
Then the \emph{variable weak Hardy space} $W\!H^{p(\cdot)}(\rn)$ is defined by setting
\begin{align*}
W\!H^{p(\cdot)}(\rn):=\lf\{f\in\cs'(\rn):\ \|f\|_{W\!H^{p(\cdot)}(\rn)}:=
\|f_{N,+}^\ast\|_{\wlv}<\fz\r\}.
\end{align*}
\end{defn}

\begin{rem}
It was proved in \cite[Theorem 3.7]{yyyz16} that the space $\whv$ is independent of the choice
of $N\in(\frac{n}{\min\{p_-,1\}}+n+1,\,\fz)$.
\end{rem}

Moreover, the space $\whv$ admits the following molecular characterization, which
was obtained in \cite[Theorem 5.3]{yyyz16}.

Let $p(\cdot)\in C^{\log}(\rn)$, $q\in (1,\fz]$, $s\in{\zz}_+$ and
$\uc\in (0, \fz)$. A measurable function $m$ is called a
$(p(\cdot), q, s,\epsilon)$-\emph{molecule} associated
with some ball $B\subset\rn$ if
\begin{enumerate}
\item[{\rm (i)}] for any $j\in\zz_+$, $\|m\|_{L^q(U_j(B))}\le 2^{-j\uc}
|U_j(B)|^{\frac{1}{q}}\|\chi_B\|_{\lv}^{-1}$,
where $U_j(B)$ is given by \eqref{eq ujb}.

\item[{\rm (ii)}] $\int_{\rn}m(x)x^\beta dx=0$ for all
$\beta\in\zz_+^n$ with $|\beta|\leq s$.
\end{enumerate}

\begin{defn}\label{mod2}
Let $p(\cdot)\in C^{\log}(\rn)$, $q\in(1,\fz]$, $\uc\in (0, \fz)$
and $s\in(\frac{n}{p_-}-n-1,\fz)\cap{\zz}_+$ with $p_-$ as in \eqref{2.1x}.
The \emph{variable weak molecular Hardy space}
$W\!H_{\rm mol}^{p(\cdot),q,s,\uc}(\rn)$ is defined as the space of all $f\in\cs'(\rn)$
which can be decomposed as  $f=\sum_{i\in\zz}\sum_{j\in\nn}\lij\mij$
in $\cs'(\rn)$, where $\{\mij\}_{i\in\zz,j\in\nn}$ is a sequence of
$(p(\cdot),q,s,\uc)$-molecules associated, respectively, with balls
$\{\Bij\}_{i\in\zz,j\in\nn}$,
$$\{\lij\}_{i\in\zz,j\in\nn}:=\{\wz A2^i\|\chi_{\Bij}\|_{\lv}\}_{i\in\zz,j\in\nn}$$ with
$\wz A$ being a positive constant independent of $i$ and $j$, and there exist
positive constants $A$ and $C$ such that, for any $i\in\zz$ and
$x\in\rn, \sum_{j\in\nn}\chi_{C\Bij}(x)\le A$.

Moreover, for any $f\in W\!H_{\rm mol}^{p(\cdot),q,s,\uc}(\rn)$, define
$$\|f\|_{W\!H_{\rm mol}^{p(\cdot),q,s,\uc}(\rn)}:=\inf\lf[\sup_{i\in\zz}{\lf\|\lf\{\sum_{j\in\nn}
\lf[\frac{\lij\chi_\Bij}
{\|\chi_\Bij\|_{\lv}}\r]^{\underline{p}}\r\}^{1/{\underline{p}}}\r\|
_{\lv}}\r],\noz\\$$
where the infimum is taken over all decompositions of $f$ as above.
\end{defn}

\begin{lem}\label{mothm1}
Let $p(\cdot)\in C^{\log}(\rn)$, $q\in(\max\{p_+,1\},\fz]$,
$s\in(\frac{n}{p_-}-n-1,\fz)\cap{\zz}_+$ and
$\uc\in (n+s+1, \fz)$, where $p_+$ and $p_-$ are as in \eqref{2.1x}.
Then $\whv=W\!H_{\rm mol}^{p(\cdot),q,s,\uc}(\rn)$ with equivalent quasi-norms.
\end{lem}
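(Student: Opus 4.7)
The plan is to prove the two inclusions separately, with the easier direction handled by molecular estimates and the harder direction by a Calder\'on--Zygmund-type decomposition adapted to the weak variable setting.

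For the inclusion $W\!H_{\rm mol}^{p(\cdot),q,s,\uc}(\rn)\subset \whv$, I would start from a decomposition $f=\sum_{i\in\zz,j\in\nn}\lij \mij$ (convergent in $\cs'(\rn)$) and bound $\|f_{N,+}^{\ast}\|_{\wlv}$. The first key step is a pointwise estimate of the grand maximal function of a single molecule: the vanishing-moment condition of order $s$ combined with the annular $L^q$ decay (weighted by $2^{-k\uc}$) and the assumption $\uc>n+s+1$ yields a uniform control of $(\mij)_{N,+}^{\ast}$ whose $L^q$ norm on the annulus $U_k(\Bij)$ decays in $k$ faster than $2^{-k(n+s+1)}$. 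Then, given $\az\in(0,\fz)$, I fix $i_0\in\zz$ with $2^{i_0}\le\az<2^{i_0+1}$ and split $f\ls \sum_{i<i_0}\cdots + \sum_{i\ge i_0}\cdots$. For the low-level part I apply H\"older's inequality with an exponent $a\in(0,1-1/q)$ to separate scales, and for the high-level part I use the subadditivity $(\sum_j|\theta_j|)^{\underline p}\le\sum_j|\theta_j|^{\underline p}$. In both cases Lemma \ref{l-1-30} reduces $\lv$-norms of molecular sums to $\lv$-norms of characteristic functions on $\Bij$, and Remark \ref{r-1-30} controls the dilated balls $2^k\Bij$ via the Fefferman--Stein inequality (Lemma \ref{l-2-1x}). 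This parallels the argument used in the proof of Proposition \ref{p-3-19x}, but now applied to classical molecules rather than operator-adapted ones.

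For the reverse inclusion $\whv\subset W\!H_{\rm mol}^{p(\cdot),q,s,\uc}(\rn)$, I would perform a Calder\'on--Zygmund decomposition of $f$ at each dyadic level of the grand maximal function. For each $i\in\zz$ set $O_i:=\{x\in\rn:\ f_{N,+}^{\ast}(x)>2^i\}$, which is open with $\|\chi_{O_i}\|_{\lv}\le 2^{-i}\|f\|_{\whv}$ up to a universal constant. Applying Lemma \ref{l-1-11x} to $O_i$ yields Whitney balls $\{B_{i,j}\}_{j\in\nn}$ with the bounded-overlap property. Fix a smooth partition of unity $\{\eta_{i,j}\}_{j\in\nn}$ subordinate to a slight enlargement of $\{B_{i,j}\}_{j\in\nn}$, and define the bad parts $b_{i,j}:=(f-P_{i,j})\eta_{i,j}$, where $P_{i,j}$ is the unique polynomial of degree at most $s$ on $\supp \eta_{i,j}$ making $b_{i,j}$ orthogonal to all polynomials of degree at most $s$. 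The good parts $g_i:=f-\sum_j b_{i,j}$ satisfy $g_i\to f$ in $\cs'(\rn)$ as $i\to\fz$, and each telescoping difference $g_{i+1}-g_i$, after regrouping with the $(i+1)$-st partition of unity, yields a scalar multiple of a $(p(\cdot),q,s,\uc)$-molecule associated with a Whitney ball at level $i$. Setting $\lij:=\wz A\,2^i\|\chi_{B_{i,j}}\|_{\lv}$ for an appropriate constant $\wz A$, the bounded-overlap property together with the estimate $\sup_{i\in\zz}2^i\|\chi_{O_i}\|_{\lv}\ls \|f_{N,+}^{\ast}\|_{\wlv}$ delivers the required molecular quasi-norm bound.

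The main obstacle will be verifying that each bad part $b_{i,j}$ (or rather the telescoping piece extracted from $g_{i+1}-g_i$) satisfies the $L^q$ decay $2^{-k\uc}|U_k(B_{i,j})|^{1/q}\|\chi_{B_{i,j}}\|_{\lv}^{-1}$ on every annulus. This requires a pointwise bound $|g_i|\ls 2^i$ almost everywhere, which is delicate in the weak setting because $L^2(\rn)\cap \whv$ is \emph{not} dense in $\whv$ (see the remark following Definition \ref{d-4-9}); the standard density argument from the classical Hardy space theory is therefore unavailable. One must instead carry out the decomposition directly at the distributional level, tracking convergence in $\cs'(\rn)$ and in the molecular quasi-norm simultaneously, following the techniques of Fefferman--Soria \cite{fs86} and their variable-exponent extensions developed in \cite{yyyz16,sy16}. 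The precise parameter constraints $q>\max\{p_+,1\}$, $s>n/p_--n-1$, and $\uc>n+s+1$ are exactly what is needed so that the geometric series arising in the annular summation converge after passing through Lemma \ref{l-1-30} and Remark \ref{r-1-30}.
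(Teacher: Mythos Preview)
The paper does not supply a proof of this lemma at all: immediately before the statement it says that the molecular characterization ``was obtained in \cite[Theorem 5.3]{yyyz16}'', and the lemma is simply quoted from that reference. So there is nothing in the present paper to compare your argument against.

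That said, your outline is a reasonable reconstruction of the proof given in \cite{yyyz16}. The inclusion $W\!H_{\rm mol}^{p(\cdot),q,s,\uc}(\rn)\subset\whv$ is indeed handled there by the level-splitting at $2^{i_0}\le\az<2^{i_0+1}$, the maximal-function bound on molecules (using the vanishing moments up to order $s$ and the annular $L^q$ decay with $\uc>n+s+1$), and the reduction to characteristic functions via the variable-exponent Fefferman--Stein inequality, exactly as you describe and as is done in Proposition \ref{p-3-19x} here. For the reverse inclusion, \cite{yyyz16} actually passes through the \emph{atomic} characterization of $\whv$ (their Theorem 4.4) rather than producing molecules directly from a Calder\'on--Zygmund decomposition; every $(p(\cdot),\infty,s)$-atom is trivially a $(p(\cdot),q,s,\uc)$-molecule, so once the atomic decomposition of $\whv$ is in hand the inclusion $\whv\subset W\!H_{\rm mol}^{p(\cdot),q,s,\uc}(\rn)$ is immediate. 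Your proposed direct route via telescoping the good parts $g_{i+1}-g_i$ would also work, but it duplicates the effort already invested in the atomic decomposition, and the subtlety you flag about density (that $L^2(\rn)\cap\whv$ is not dense in $\whv$) is precisely why \cite{yyyz16} builds the atomic decomposition at the distributional level first.
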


\begin{rem}\label{r-4-2}
If the variable exponent $p(\cdot)$ satisfies that $\frac n{n+1}< p_-\le p_+\le 1$,
then $\frac n{p_-}-n-1<0$. Thus, in this case, we can take $s=0$ and $\uc>n+1>\frac n{p_-}$
in Lemma \ref{mothm1}.
\end{rem}

\begin{thm}\label{t-4-2}
Let $p(\cdot)\in C^{\log}(\rn)$ with $\frac n{n+1}<p_-\le p_+\le1$ and $L$ be the
second-order divergence form elliptic operator
as in \eqref{eq op}, where $p_-$ and $p_+$ are given by \eqref{2.1x}.
Then the Riesz transform $\nabla L^{-1/2}$ is bounded from
$W\!H_L^{p(\cdot)}(\rn)$ to $\whv$.
\end{thm}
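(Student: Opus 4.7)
The plan is to convert the operator-adapted molecular decomposition of $W\!H_L^{p(\cdot)}(\rn)$ provided by Theorem \ref{t-mol} into a classical weak molecular decomposition for $W\!H^{p(\cdot)}(\rn)$ in the sense of Definition \ref{mod2}, and then conclude via Lemma \ref{mothm1}. By a standard density argument, it suffices to treat $f\in W\!H_L^{p(\cdot)}(\rn)\cap L^2(\rn)$. Fix $M\in\nn\cap(\frac{n}{2}[\frac{1}{p_-}-\frac12],\fz)$ and $\uc\in(\frac{n}{p_-},\fz)$ sufficiently large (to be chosen depending on the desired decay rate in step three). By Proposition \ref{p-3-28}, write
\begin{equation*}
f=\sum_{i\in\zz,j\in\nn}\lij m_{i,j}\quad\text{in }L^2(\rn),
\end{equation*}
where each $m_{i,j}$ is a $(p(\cdot),M,\uc)_L$-molecule associated to a ball $\Bij$, $\lij=2^i\|\chi_\Bij\|_\vp$, the balls $\{\Bij\}_{j\in\nn}$ satisfy $\sum_{j\in\nn}\chi_{c\Bij}\le M_0$ for each $i$, and $\sup_{i\in\zz}\ca(\{\lij\}_{j\in\nn},\{\Bij\}_{j\in\nn})\ls\|f\|_{W\!H_L^{p(\cdot)}(\rn)}$.

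Next, I would use the solution of the Kato square-root problem \cite[Theorem 1.4]{ahlmt02}, which gives $D(L^{1/2})=H^1(\rn)$ with $\|L^{1/2}g\|_{L^2(\rn)}\sim\|\nabla g\|_{L^2(\rn)}$, so $\nabla L^{-1/2}$ extends to a bounded operator on $L^2(\rn)$. Consequently,
\begin{equation*}
\nabla L^{-1/2}(f)=\sum_{i\in\zz,j\in\nn}\lij\nabla L^{-1/2}(\mij)\quad\text{in }[L^2(\rn)]^n,
\end{equation*}
which in particular converges in $\cs'(\rn)$ componentwise.

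The technical heart of the argument is the following claim: there exist a positive constant $\wz C$ and $\tz\uc\in(n+1,\fz)$, both independent of $i,j$, such that $\wz C\,\nabla L^{-1/2}(\mij)$ is a classical $(p(\cdot),2,0,\tz\uc)$-molecule (in the sense preceding Definition \ref{mod2}) associated with the ball $\Bij$. To establish the decay, I use the Calder\'on-type reproducing identity
\begin{equation*}
\nabla L^{-1/2}(\mij)=\frac{2}{\sqrt{\pi}}\int_0^{r_\Bij}\nabla e^{-t^2 L}(\mij)\,dt
+\frac{2}{\sqrt{\pi}}\int_{r_\Bij}^\fz\nabla e^{-t^2 L}(\mij)\,dt
=:\mathrm{I}+\mathrm{II},
\end{equation*}
together with the Davies-Gaffney off-diagonal estimates for the families $\{t\nabla e^{-t^2L}\}_{t>0}$ and $\{t\nabla(t^2L)^{M}e^{-t^2L}\}_{t>0}$ (known for second-order divergence-form elliptic $L$; see \cite{ahlmt02,hm09,hmm11}). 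For the short-time piece $\mathrm{I}$, I decompose $\mij=\sum_{\ell\in\zz_+}\mij\chi_{U_\ell(\Bij)}$ and combine the Gaffney decay $e^{-c[d(U_\ell(\Bij),U_k(\Bij))/t]^2}$ with the molecular $L^2$ bound from Definition \ref{d-1-17}. For the long-time piece $\mathrm{II}$, I use $\mij=L^M(L^{-M}\mij)$ to rewrite $\nabla e^{-t^2L}\mij=t^{-2M-1}[t\nabla(t^2L)^Me^{-t^2L}](L^{-M}\mij)$, so that the extra factor $t^{-2M-1}$ integrated from $r_\Bij$ to $\fz$ produces a gain of $r_\Bij^{-2M}$, which cancels against the bound $\|L^{-M}\mij\|_{L^2(U_\ell(\Bij))}\le r_\Bij^{2M}2^{-\ell\uc}|2^\ell\Bij|^{1/2}\|\chi_\Bij\|_\vp^{-1}$. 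Summing geometric series in $\ell$ and using $M,\uc$ large enough yields
\begin{equation*}
\|\nabla L^{-1/2}(\mij)\|_{L^2(U_k(\Bij))}\le C\,2^{-k\tz\uc}|U_k(\Bij)|^{1/2}\|\chi_\Bij\|_\vp^{-1}
\end{equation*}
for every $k\in\zz_+$. For the vanishing moment, since $\mij\in R(L^M)\subset R(L^{1/2})$, the function $u:=L^{-1/2}(\mij)$ lies in $H^1(\rn)$, and the same off-diagonal analysis applied to $u=L^{M-1/2}(L^{-M}\mij)$ shows that $u\in L^1(\rn)$ (with sufficient decay); hence $u\in W^{1,1}(\rn)$ and integration by parts against a sequence of smooth cutoffs gives $\int_\rn\nabla L^{-1/2}(\mij)(x)\,dx=0$ componentwise. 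Since $\frac{n}{n+1}<p_-$, by Remark \ref{r-4-2} only the $s=0$ cancellation is needed, and the claim is proved.

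The final step combines these ingredients. The expansion $\nabla L^{-1/2}(f)=\sum_{i,j}\lij[\nabla L^{-1/2}(\mij)]$ converges in $\cs'(\rn)$, with each $\nabla L^{-1/2}(\mij)$ a constant multiple of a $(p(\cdot),2,0,\tz\uc)$-molecule on $\Bij$, with the same coefficients $\lij=2^i\|\chi_\Bij\|_\vp$ and the same overlap property as in the $W\!H_L^{p(\cdot)}(\rn)$-decomposition. Applying Lemma \ref{mothm1} with $q=2$, $s=0$ and $\tz\uc>n+1$ yields
\begin{equation*}
\|\nabla L^{-1/2}(f)\|_{\whv}\ls\sup_{i\in\zz}\ca(\{\lij\}_{j\in\nn},\{\Bij\}_{j\in\nn})
\ls\|f\|_{W\!H_L^{p(\cdot)}(\rn)},
\end{equation*}
as desired. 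I expect the main obstacle to be the sharp bookkeeping of Davies-Gaffney constants when summing $\ell$- and $k$-annuli in the two integral pieces $\mathrm{I}$ and $\mathrm{II}$; everything else is structural.
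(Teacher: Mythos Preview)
Your proposal is correct and follows essentially the same route as the paper: decompose $f$ via Proposition \ref{p-3-28}, show $\nabla L^{-1/2}$ sends each $(p(\cdot),M,\uc)_L$-molecule to a classical $(p(\cdot),2,0,\tz\uc)$-molecule, and conclude by Lemma \ref{mothm1} with $s=0$. The only difference is that the paper dispatches the molecule-to-molecule estimate and the vanishing moment by citing \cite[Theorem 5.17]{yzz17}, whereas you sketch the standard short-time/long-time splitting with Davies--Gaffney bounds for $\{t\nabla e^{-t^2L}\}$ and $\{t\nabla(t^2L)^Me^{-t^2L}\}$; your outline of that step is sound.
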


\begin{proof}
Let $f\in W\!H_L^{p(\cdot)}(\rn)\cap L^2(\rn)$. Then, by Proposition \ref{p-3-28}, we find that,
for any given $\uc\in(\frac n{p_-},\fz)$ and $M\in \nn\cap (\frac n2[\frac1{p_-}-\frac12],\fz)$,
there exist sequences $\{\mij\}_{i\in\zz,j\in\nn}$
 of $(p(\cdot),M,\uc)_L$-molecules associated, respectively, with balls
$\{\Bij\}_{i\in\zz,j\in\nn}$ and numbers $\{\lij\}_{i\in\zz,j\in\nn}$ satisfying
\begin{enumerate}
\item[(i)] $f=\sum_{i\in\zz,j\in\nn} \lij m_{i,j}$ in $L^2(\rn)$;
\item[(ii)] for any $i\in\zz$ and $j\in\nn$, $\lij:=2^i\|\chi_{\Bij}\|_{\vp}$;
\item[(iii)]
\begin{equation}\label{4-2x}
\sup_{i\in\zz}\ca(\{\lij\}_{j\in\nn},\{\Bij\}_{j\in\nn})\ls \|f\|_{W\!H_L^{p(\cdot)}(\rn)};
\end{equation}
\item[(iv)] there exists a positive constant $c\in(0,1]$ such that, for any $x\in\rn$ and $i\in\zz$,
$\sum_{j\in\nn}\chi_{c\chi_\Bij}(x)\le M_0$ with $M_0$ being a positive constant independent of $x$
and $i$.
\end{enumerate}

Moreover, by an argument similar to that used in the proof of \cite[Theorem 5.17]{yzz17},
we know that, for any $i\in\zz$ and $j\in\nn$,
\begin{equation*}
\|\nabla L^{-1/2}(\mij)\|_{L^2(U_k(\Bij))}
\le 2^{-k\min\{\uc,2M+\frac n2\}}|2^k\Bij|^\frac12\|\chi_{\Bij}\|_\lv^{-1},
\quad \forall\,k\in\zz_+,
\end{equation*}
and $\int_\rn \nabla L^{-1/2}(\mij)(x)\,dx=0$, namely, $\nabla L^{-1/2}(\mij)$
is a $(p(\cdot),2,0,\uc)$-molecule associated with $\Bij$. From this, together with
the boundedness of $\nabla L^{-1/2}$ on $L^2(\rn)$ and \eqref{4-2x}, we deduce that
\begin{align*}
\nabla L^{-1/2}(f)=\sum_{i\in\zz}\sum_{j\in\nn}\lij \nabla L^{-1/2}(\mij)
\end{align*}
belongs to $W\!H_{\rm mol}^{p(\cdot),2,0,\uc}(\rn)$. By this, \eqref{4-2x},
Lemma \ref{mothm1} and Remark
\ref{r-4-2}, we conclude that
\begin{align*}
\|\nabla L^{-1/2}(f)\|_{\whv}
&\sim \|\nabla L^{-1/2}(f)\|_{W\!H_{\rm mol}^{p(\cdot),2,0,\uc}(\rn)}\\
&\ls \sup_{i\in\zz}\ca(\{\lij\}_{j\in\nn},\{\Bij\}_{j\in\nn})\ls \|f\|_{W\!H_L^{p(\cdot)}(\rn)}.
\end{align*}
Therefore, the Riesz transform $\nabla L^{-1/2}$ is bounded from
$W\!H_L^{p(\cdot)}(\rn)$ to $\whv$. This finishes the proof of Theorem \ref{t-4-2}.
\end{proof}

\medskip

\noindent Ciqiang Zhuo

\medskip

\noindent Key Laboratory of High Performance Computing and Stochastic
Information Processing (HPCSIP) (Ministry of Education of China),
College of Mathematics and Statistics, Hunan Normal University,
Changsha, Hunan 410081, People's Republic of China

\smallskip

\noindent {\it E-mail}: \texttt{cqzhuo87@hunnu.edu.cn} (C. Zhuo)

\bigskip

\noindent Dachun Yang (Corresponding author)
\medskip

\noindent  Laboratory of Mathematics and Complex Systems
(Ministry of Education of China),
School of Mathematical Sciences, Beijing Normal University,
Beijing 100875, People's Republic of China

\smallskip

\noindent {\it E-mail}: \texttt{dcyang@bnu.edu.cn} (D. Yang)

\end{document}